\documentclass[reqno,a4paper,11pt]{amsart}
\usepackage{color}
\usepackage{amsmath,amssymb,amsfonts,amsthm,enumerate}
\usepackage{hyperref,mathrsfs,accents}
\usepackage[T1]{fontenc}
\usepackage[active]{srcltx}
\usepackage{epsfig}
\usepackage{graphicx}
\usepackage{cite}
\usepackage{tikz}
\usepackage{pgfplots}
\usepackage{tikz-3dplot}
\usepackage{subcaption}
\usepackage{pgfplots}
\usepackage{adjustbox}
\usepackage{changepage}
\usetikzlibrary{patterns,backgrounds}

\tdplotsetmaincoords{60}{115}
\pgfplotsset{compat=newest}

%%% Dimensioni pagina
\setlength{\voffset}{1 cm} \setlength{\oddsidemargin}{0.1cm}
\setlength{\evensidemargin}{0.1cm}
\setlength{\textwidth}{16.0cm} \setlength{\textheight}{20.5cm}

%%%%% to fix equation numbering section by section

\catcode`@=11 \@addtoreset{equation}{section} \catcode`@=12

\newtheorem{theorem}{Theorem}[section]
\newtheorem{lemma}[theorem]{Lemma}

\newtheorem{remark}[theorem]{Remark}
\newtheorem{proposition}[theorem]{Proposition}

\newcommand{\He}{\mathbb{H}}

\newcommand{\N}{\mathbb{N}}
\newcommand{\R}{\mathbb{R}}
\newcommand{\W}{\mathbb{W}}

 %Sphere

%FUNCTIONALS

% weak convergences

 % Misura di Hausdorff
 % Misura di Lebesgue
 %Characteristic function
 %averaged integral
 %identity
 %dimension
\newcommand{\de}{\partial}

\newcommand{\spt}{\mathop{\mathrm{spt}}}

\newcommand{\difsim}{\Delta}

\newcommand{\cA}{\mathcal{A}}
\newcommand{\cB}{\mathcal{B}}

\newcommand{\cE}{\mathcal{E}}

\newcommand{\cL}{\mathcal{L}}

\newcommand{\cQ}{\mathcal{Q}}

\renewcommand{\cB}{\mathscr B}
\renewcommand{\cA}{\mathscr A}
\renewcommand{\cL}{{\mathscr L}^3}

 %sign function

\renewcommand{\Subset}{\subset\!\subset}

\newcommand{\Lip}{\mathrm{Lip}}

\newcommand{\p}{P}

\renewcommand{\phi}{\varphi}
\renewcommand{\rho}{\varrho}
\renewcommand{\epsilon}{\varepsilon}
\renewcommand{\He}{\mathbb{H}^{1}}

\AtBeginDocument{
	\let\div\relax
	\DeclareMathOperator{\div}{div}
}

\author{Roberto Monti}
\address{Dipartimento di Matematica, via Trieste 63, IT-35131 Padova (Italy)}
\email{monti@math.unipd.it}

\author{Giacomo Vianello}
\address{UTIA -- Czech Academy of Sciences, Pod Vodárenskou věží 1143/4, 182 00 Prague 8 (Czech Republic)}
\email{vianello@utia.cas.cz}

\subjclass[2020]{53C17, 49Q10, 53A10.}

\keywords{Plateau, regularity, Heisenberg Group, sub-Riemannian Geometry}

\title[Plateau's Problem in $\He$]{Plateau's Problem for intrinsic graphs in the Heisenberg Group}

\thanks{Roberto Monti has been supported by PRIN 2022F4F2LH ``Regularity problems in sub Riemannian structures''. Giacomo Vianello has been supported by: PRIN 2022F4F2LH ``Regularity problems in sub Riemannian structures''; GNAMPA (INdAM) Project 2025 ``Structure of sub-Riemannian hypersurfaces in Heisenberg groups'', CUP ES324001950001.}

\begin{document}
	
	\begin{abstract}
		Using a geometric construction, we solve Plateau's Problem in the Heisenberg group $\He$
		for intrinsic graphs defined on a convex domain $D$, under a smallness condition either on
		the boundary $\partial D$ or on the boundary datum $\phi \in \Lip(\partial D)$.  
		The proof relies on a calibration argument.  
		We then apply these techniques to establish a new regularity result for $H$-perimeter minimizers.
	\end{abstract}

	\maketitle
	%\tableofcontents
	\large
	\section{Introduction}
	The classical Plateau's Problem—minimizing the area of a surface spanning a given boundary curve—can be formulated in several different ways: for graphs, parametric surfaces, Caccioppoli sets, currents, and others. The precise formulation and its solution depend 
	on the chosen notion of surface and area, as well as on how the boundary condition is imposed.
	A nice overview can be found in \cite{Stein_book_2014,Giusti_book}.
	
	In this paper, we study the Plateau's Problem in the first Heisenberg group $\He$, where the notion of area is defined via the horizontal perimeter of a set in the sense of De Giorgi, along with the associated area formulas.
	In this context, the existence of solutions to the problem was first investigated by Garofalo and Nhieu in \cite{Garofalo_Nhieu_1996} for sets with finite perimeter. For the case of $t$-graphs in $\mathbb{H}^n$, the existence was first studied by S. D. Pauls in \cite{SDPauls_Min_Surf_2004}, adapting techniques related to the Bounded Slope Condition, which are indeed applicable to general area-type functionals, and later by other authors in \cite{PSCTV_2015,Cheng_Hwang_Yang_2009,GPPV_2024,Pozuelo_Verz_2024,DLPT_2022}.

	Our focus is on \emph{intrinsic graphs}—that is, graphs defined along the integral curves of a left-invariant vector field. This notion of graph appears to be the most appropriate, as the boundary of sets with finite $H$-perimeter is rectifiable precisely in this sense: up to a negligible set, it can be covered by a countable union of $C^1_H$-regular  \emph{intrinsic graphs}, see \cite[Theorem 7.1]{FSSC_Rect_Per_M_Ann_2001}. In fact, our construction yields solutions to the Plateau's  Problem that are intrinsic graphs.
	
	In this setting, the question of the existence of solutions becomes more challenging. On the one hand,
	when passing from 
	$t$-graphs to intrinsic graphs, the area functional is strongly non-linear and non-convex.
	As a consequence, stationary points of the area may be unstable \cite{DGN_Stat_Unst_2008}.
	On the other hand, there is, so far, no satisfactory regularity theory for 
	$H$-minimal surfaces
	(i.e., stationary points of the horizontal perimeter). Indeed, several examples of 
	$H$-minimal surfaces with low regularity are known
	\cite[Theorem C]{SDPauls_Low_Reg_2006}.

	However, when $n=1$, the situation is more favorable: smooth $2$-dimensional $H$-minimal surfaces in $\He$ without characteristic points are ruled (i.e., foliated by horizontal segments) and therefore exhibit a certain rigidity (see for instance \cite{Galli_Ritore_2015,Giovannardi_Ritore_2024,CHM_Min_surf_Pseudoherm_05}).
	This observation is the starting point of our study.
	
	The first Heisenberg group $\He$ is the 3-dimensional Euclidean space $\R^3$ equipped with the group operation:
	\begin{equation*}
		(x,y,t) \cdot (\xi,\eta,\tau) = \big(x + \xi,\ y + \eta,\ t + \tau + 2(y\xi - x\eta)\big),\qquad 
		(x,y,t) ,  (\xi,\eta,\tau) \in\He.
	\end{equation*}
	The Lie algebra of left-invariant vector fields is spanned by the three vector fields defined, at each point $p = (x,y,t) \in \He$,  as:
	\begin{equation*}
		X(p) := \dfrac{\partial}{\partial x} + 2y \dfrac{\partial}{\partial t}, \qquad
		Y(p) := \dfrac{\partial}{\partial y} - 2x \dfrac{\partial}{\partial t}, \qquad
		T := \dfrac{\partial}{\partial t}.
	\end{equation*}
	We denote by $H$ the \emph{horizontal sub-bundle} of $\He$, that is,
	\begin{equation*}
		H_p := \mathrm{span} \{ X(p), Y(p) \} , \qquad \text{for $p \in \He$}.
	\end{equation*} 
	%$H_p := \mathrm{span} \{ X(p), Y(p) \}$ for $p \in \He$.
	
	A $C^1$ vector field $V$ defined in an open set $\Omega \subset \He$ is said to be horizontal if $V(p) \in H_p$ for every $p \in \Omega$. In this case, we write $V \in C^1(\Omega; H)$.
	If  $V\in C^1_c(\Omega;H)$, i.e.,  $V =v_1 X + v_2  Y$ with $v_1,v_2\in C_c^1(\Omega)$, we let $\| V\| _\infty = \max _ {p\in\Omega} (v_1(p)^2+v_2(p)^2)^{1/2}$. Notice that
	\[
	\div V := Xv_1 + Yv_2 
	\]
	is the standard divergence of $V$.
	
	The Haar measure of $\He$ is the Lebesgue measure $\cL$.
	We say that a $\cL$-measurable set $E \subset \He$ has \emph{finite H-perimeter} in $\Omega$ if
	\begin{equation*}
		\p_H(E;\Omega) := \sup \left \{ \int_E \div V d \cL \, : \, V \in C^1_c(\Omega;H) , \, \|V\| _\infty \leq 1 \right \} < \infty .
	\end{equation*}
	The quantity $\p_H(E;\Omega)$ is called the \emph{H-perimeter} of $E$ in $\Omega$, see \cite{FSSC_Rect_Per_M_Ann_2001}.
	When $\partial E \cap \Omega$ is the graph of a function with suitable regularity, the $H$-perimeter can be represented by   area-type formulas.
	We are interested in the case of intrinsic graphs, and, without loss of generality, we focus on graphs defined along the vector field $X$.
	
	Let $\Phi :\R\times \He\to \He$ be the flow of $X$, and define $\Phi_s : \He\to\He$ by  $\Phi_s (p)=\Phi(s,p)$, for $s\in\R$ and $p\in\He$.
	We denote by $\mathbb{W} :=  \{ p = (x,y,t) \in \He \, : \, x = 0 \}$ the vertical plane (subgroup) ‘‘orthogonal to $X$'', and we fix a domain 
	$D \subset \mathbb{W}$. The intrinsic graph (along $X$) of a function $u: D \to \R$ is defined as
	\begin{equation*}
		S_u  =\big \{ \Phi_{s}(y,t) = (s, y,  t + 2  y s)\in\He  \, : \, (y,t) \in D,\, s= u(y,t)\big  \}.
	\end{equation*}
	Similarly, the intrinsic epigraph of $u$ is \[ 
	E_u  =\big \{ \Phi_{s}(y,t) \in\He  \, : \, (y,t) \in D,\, s> u(y,t)\big  \}.
	\]
	The (left) cylinder over $D$ is denoted by
	\[
	D\cdot \mathbb V  = \big \{ \Phi_{s}(y,t) \in\He  \, : \, (y,t) \in D,\, s\in\R\big  \},
	\]
	where $\mathbb V = \big \{(x,0,0)\in\He\, :\, x\in\R\big\} $ is the complementary subgroup of $\mathbb W$.
	
	If $D\subset \mathbb W $ is bounded and $u\in\Lip(D)$, then $P_H(E_u; D\cdot \mathbb V   )<\infty$ and there holds the area formula, see \cite{FSSC_AFgr_Subm_2007}, 
	\begin{equation} \label{eq:hor_per_X_intr_epi}
		\cA_D(u) := \p_H(E_u; D\cdot \mathbb V  ) =  \int_{D} \sqrt{1 + \cB u(y,t) ^2} \, dy dt <\infty ,
	\end{equation}
	where
	\[
	\cB u  = \Big( \dfrac{\de }{\de y} - 4 u  \dfrac{\de }{\de t}\Big) u  = \dfrac{\de u}{\de y} - 2 \dfrac{\de }{\de t}u^2.
	\]
	Notice that the area functional $u\mapsto \cA_D(u)$ is not convex.
	
	The Burgers' operator $u\mapsto \cB u$   is    well-defined  also  in distributional sense,
	and formula \eqref{eq:hor_per_X_intr_epi} remains valid
	under the weaker assumption that $u$ is a continuous function
	such that $\cB u \in C(D)$, see \cite[Proposition 2.22]{ASCV_Intr_reg_hyp_2006}, or even in the weaker situation of $\cB u \in L^\infty(D)$, see \cite[Theorem 1.6]{CMP_Approx_Intr_Lip_2014}.
	We say that $u\in C(D)$ is \emph{intrinsic Lipschitz} if
	$\cB u \in L^\infty(D)$, and that $u\in C^{1,h}_H (D)$ (resp. $u\in C^{1,h}_{H,loc} (D)$) if $\cB u \in C^{0,h}(D)$ (resp. $\cB u \in C^{0,h}_{loc}(D)$).

	In this paper, we address the following question:  
	\vspace{0.2cm}
	\begin{equation} \tag{PP}\label{PP}
		\begin{minipage}{0.8\linewidth}
			\textit{Given a bounded 
				open set $D \subset \mathbb{W}$ and a function $\phi : \de D \to \R$, 
				can one find a function $u\in C(\bar  D )$ such that $u|_{\de D} = \phi$ and $\cA_D(u)$ is minimal among
				a suitable class of competitors satisfying the same boundary condition as $u$? }
		\end{minipage} 
	\end{equation}
	\vspace{0.2cm}

\noindent 
We will provide an affirmative answer to this version of the Plateau's Problem
for a large class of convex domains $D$ and for Lipschitz boundary data $\phi : \de D \to \R$, also proving some regularity for the minimizer.
The issue about the ‘‘suitable class of competitors'' is delicate and will be clarified  later.
We are able to prove existence under a \emph{smallness} condition that jointly involves 
$\de D$ and the boundary datum $\phi$. %The starting point of our argument is an interpolation of the boundary datum via straight horizontal segments,  see Theorem \ref{lem:exlambda} below.

\newcommand{\strict}
{\marginpar{STRICTLY} }

Let us fix the interval  $I = (0,\bar{t})\subset \R$, for some $\bar{t} > 0$.
We consider domains $D \subset \mathbb{W}$ of the form
\begin{equation}\label{D}
	D = D_{\gamma_1,\gamma_2} := \{ (y,t) \in \mathbb{W} \, : \, t \in I, \, \gamma_1(t) < y < \gamma_2(t) \} ,
\end{equation}
where $\gamma_1, \gamma_2 \in\Lip(I)$ are such that
\begin{equation}\label{gamma}
	\gamma_1(t) < 0 < \gamma_2(t) \quad \text{for all $t \in I$} \qquad \text{and} \qquad \gamma_1(0) = \gamma_1(\bar{t}) = \gamma_2(0) = \gamma_2(\bar{t}) = 0 .
\end{equation}
The domain $D=D_{\gamma_1,\gamma_2}$ is convex  if and only if $\gamma_1$ is   convex and $\gamma_2$ is   concave in $I$. 
Notice that $D$ is not allowed to be a disk, because in this case $\gamma_1,\gamma_2\notin\Lip(I)$.

Given a continuous function $\phi : \de D \to \R$, we define the compositions  
\begin{equation}\label{phi}
	\phi_1(s) := \phi(\gamma_1(s),s) \qquad \text{and} \qquad \phi_2(s) := \phi(\gamma_2(s),s),\qquad s\in I .
\end{equation}
We   also consider the  mappings
\begin{align} \label{eq:def_p_1}
	& \bar{I} \ni s \mapsto p_1(s) := \Phi_{\phi_1(s)}(\gamma_1(s),s) 
	= \big (\phi_1(s),  \gamma_1(s), s + 2 \gamma_1(s) \phi_1(s)\big ) , 
	\\
	& \bar{I} \ni s \mapsto p_2(s) := \Phi_{\phi_2(s)}(\gamma_2(s),s) 
	= \big (\phi_2(s),  \gamma_2(s),  s + 2 \gamma_2(s) \phi_2(s)\big ) . \label{eq:def_p_2}
\end{align}
The images of $p_1$ and $p_2$ form the intrinsic graph of $\phi$, i.e.,  $p_1(\bar I)\cup p_2(\bar I)= S_\phi$.
With a slight abuse of notation, we adopt the following conventions:
\begin{align*}
	& \| \gamma \|_\infty := \max \{ \| \gamma_1 \|_{\infty} , \| \gamma_2 \|_{\infty} \},
	\qquad \, \, \Lip(\gamma) := \max\{ \Lip(\gamma_1), \Lip(\gamma_2) \} , \\
	& \| \phi \|_\infty  := \max \{ \| \phi_1 \|_{\infty} , \| \phi_2 \|_{\infty} \} ,
	\qquad \Lip(\phi) := \max \{ \Lip(\phi_1), \Lip(\phi_2) \} ,
\end{align*}
and  we denote by $\zeta=\zeta_{\gamma_1,\gamma_2,\phi_1,\phi_2}$ the following parameter,
which will accompany us throughout this work:
\begin{equation}\label{zeta}
	\zeta := 4\big (\| \gamma \|_\infty + \Lip(\gamma)\big) \big(\| \phi \|_\infty  + \Lip(\phi)\big) .
\end{equation}

The first step in the construction of the solution to the Plateau Problem (\ref{PP}) 
is the following result:

\begin{theorem} \label{thm:main_1_intro}
	Let $\gamma_1,\gamma_2\in\Lip(I)$ be as in \eqref{gamma}, $D = D_{\gamma_1,\gamma_2} \subset \mathbb{W}$ as in \eqref{D},
	$\phi \in C(\de D)$ be with $\phi_1,\phi_2\in\Lip(I)$ as in \eqref{phi}, and let $\zeta$ be the parameter   in \eqref{zeta}.
	%  are Lipschitz continuous in $I$. Denote by $Q$ the open rectangle $Q := I \times (0,1)$. 
	Then we have:  
	\begin{itemize} 
		\item[(i)] If $\zeta < 1$,    there exists   $\rho\in \Lip( Q ; \R^3)$,  $Q := I \times (0,1)$,  such that:
		\begin{align}
			\rho(\de Q) &= p_1(\bar{I}) \cup p_2(\bar{I})=S_\phi;
			\\
			\rho(h,s) & = (1-h) \rho(0,s) + h \rho(1,s), \qquad \text{\quad \text{for $(h,s) \in Q$;}}
			\\
			\frac{\de}{\de h} 		\rho(h,s) & =	 \rho(1,s) - \rho(0,s) \in H_{\rho(h,s)}, \qquad\,\,\,\,   \text{for $(h,s) \in Q$.}  
		\end{align}
		\noindent
		In particular, the parametric surface $R _\phi := \rho(Q)$ is foliated 
		by straight horizontal segments spanning $S_\phi$.
		\medskip
		
		\item[(ii)] If $\zeta < (\sqrt{129} - 11)/4$ and $D$ is convex, there exists   $u \in \Lip(D) \cap C^{1,1}_H(D)$ such that:
		\begin{equation}
			u = \phi \quad \text{on $\de D$} \qquad \text{and} \qquad \text{$S_u=R_\phi $.}
		\end{equation}
		In addition, if $\gamma_1, \gamma_2, \phi_1, \phi_2 \in C^k(I)$, for some $k \geq 1$, then $u \in C^k(D)$.
		\medskip
	\end{itemize}
\end{theorem}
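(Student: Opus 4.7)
A Euclidean segment from $A$ to $B$ in $\R^3$ is horizontal in $\He$ if and only if its constant tangent $B-A$ lies in $H_{\gamma(h)}$ at every point $\gamma(h)=(1-h)A+hB$ of the segment; since horizontality is affine in the basepoint, it suffices to check it at the endpoints, and a direct computation reduces it to $B_3-A_3=2(A_2B_1-A_1B_2)$, i.e.\ the vanishing of the third coordinate of $A^{-1}\cdot B$. Applying this to the candidate endpoints $p_1(\sigma_1)$ and $p_2(\sigma_2)$ taken from \eqref{eq:def_p_1}--\eqref{eq:def_p_2}, the horizontality condition collapses, after cancellation, to the single scalar equation
\[
\sigma_2 - \sigma_1 + 2\bigl(\gamma_2(\sigma_2) - \gamma_1(\sigma_1)\bigr)\bigl(\phi_1(\sigma_1) + \phi_2(\sigma_2)\bigr) = 0.
\]
Parametrizing by, say, $\sigma_2=s$, this becomes a fixed-point equation $\sigma_1 = G(\sigma_1;s)$ whose Lipschitz constant in $\sigma_1$ is bounded by $4\bigl(\|\gamma\|_\infty\Lip(\phi)+\Lip(\gamma)\|\phi\|_\infty\bigr)\leq \zeta$. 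The hypothesis $\zeta<1$ then yields, via the Banach fixed-point theorem, a unique Lipschitz $\sigma_1:\bar I\to\bar I$, with boundary values $\sigma_1(0)=0$ and $\sigma_1(\bar t)=\bar t$ holding automatically by \eqref{gamma}. Setting $\rho(h,s):=(1-h)\,p_1(\sigma_1(s))+h\,p_2(s)$ on $\bar Q$, all the properties claimed in (i) are built in by construction.

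\textbf{Plan for (ii).} Consider the projection $\pi:\He\to\mathbb W$ along $X$, given by $\pi(x,y,t)=(y,\,t-2xy)$. A direct computation, simplified by the horizontality relation, gives
\[
\pi\bigl(\rho(h,s)\bigr)=\bigl(y(h,s),\,t(h,s)\bigr), \qquad y(h,s)=(1-h)\gamma_1(\sigma_1(s))+h\gamma_2(\sigma_2(s)),
\]
with an explicit polynomial-in-$h$ formula for $t(h,s)$. The objective is to show that, under $\zeta<(\sqrt{129}-11)/4$ and convexity of $D$, the map $\Pi:=\pi\circ\rho:\bar Q\to\bar D$ is a homeomorphism that is a $C^1$-diffeomorphism in the interior, with the two vertical edges $\{0,\bar t\}\times[0,1]$ of $\partial Q$ collapsing to the corners of $\bar D$ (where $\gamma_1=\gamma_2=0$). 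Once this is achieved, the function
\[
u(y,t):= (1-h)\phi_1(\sigma_1(s))+h\phi_2(\sigma_2(s)), \qquad (h,s)=\Pi^{-1}(y,t),
\]
is well-defined on $\bar D$, realizes $S_u=R_\phi$, and satisfies $u|_{\partial D}=\phi$.

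\textbf{Main obstacle and regularity.} I expect the interior bijectivity of $\Pi$, i.e.\ $\det D\Pi>0$ on $Q$, to be the heart of the proof. Boundary behavior is easy: convexity of $D$ ensures that $\{h=0\}$ and $\{h=1\}$ sweep the left and right graphs $\gamma_1,\gamma_2$ monotonically onto $\partial D$. For the Jacobian, one differentiates the fixed-point equation to express $\sigma_1'$ in terms of $\gamma_i',\phi_i'$ and substitutes into $\det D\Pi$; after substantial simplification via the horizontality relation, the determinant splits as a leading positive term proportional to $(\gamma_2-\gamma_1)$ minus error terms controlled by the quantities appearing in $\zeta$. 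This produces a quadratic inequality in $\zeta$ whose positive root gives exactly the threshold $(\sqrt{129}-11)/4$ (the positive root of $2\zeta^2+11\zeta-1=0$). Once bijectivity is established, Lipschitz regularity of $u$ follows from that of $\rho$ and $\Pi^{-1}$. For $u\in C^{1,1}_H(D)$, the key observation is that $u$ is \emph{affine} in $h$ along each projected horizontal segment, i.e.\ along each integral curve of $\cB$, so $\cB u$ is constant along such characteristics and equal to $(\phi_2(\sigma_2(s))-\phi_1(\sigma_1(s)))/(\gamma_2(\sigma_2(s))-\gamma_1(\sigma_1(s)))$; Lipschitz dependence of $\sigma_1,\sigma_2$ on $s$ then gives $\cB u\in C^{0,1}(D)$. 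Higher regularity $C^k$ under $C^k$ boundary data follows from the implicit function theorem applied to the fixed-point equation defining $\sigma_1$.
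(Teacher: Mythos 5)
Your plan follows the paper's strategy closely: interpolate the boundary datum by horizontal segments determined by a single scalar compatibility equation, then show that the projection $\pi\circ\rho$ onto $\mathbb W$ along $X$ is a bijection onto $D$ and set $u=\rho_1\circ(\pi\circ\rho)^{-1}$. In part $(i)$ your scalar equation is the same as the paper's $\cQ_\phi(s,\lambda)=0$ (with the roles of $\sigma_1,\sigma_2$ swapped), and your identification of $(\sqrt{129}-11)/4$ as the positive root of $2\zeta^2+11\zeta-1=0$ matches the paper's normalized-Jacobian estimate $|k_j(h,s)|<\tfrac12$. One small caveat in $(i)$: Banach's fixed-point theorem on $\bar I$ requires $G(\cdot;s)$ to be a self-map of $\bar I$, which is not automatic; the paper supplies this by an IVT argument showing $\cQ_\phi(s,0)<0<\cQ_\phi(s,\bar t)$. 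You would need that, or an equivalent, to place the fixed point in $\bar I$.

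The genuine gap is in part $(ii)$. You claim that $\Pi=\pi\circ\rho$ is a ``homeomorphism that is a $C^1$-diffeomorphism in the interior'' and then read Lipschitz regularity of $u$ off $\Pi^{-1}$. But under the stated hypotheses $\gamma_i,\phi_i$ are only Lipschitz, hence so are $\lambda$, $\rho$ and $\Pi$: there is no $C^1$ inverse function theorem available, and a.e.\ positivity of a Jacobian that only exists a.e.\ does not by itself produce a Lipschitz inverse. The paper's actual route is to approximate the datum by $C^1$ functions $\phi_1^j,\phi_2^j$ with $\zeta_j\le\zeta$, apply the classical IFT to the resulting $C^1$ maps $F^j$, prove a uniform lower bound $\delta_j\ge\Delta_j/2>0$ for $\det JF^j$ (this is exactly where $2\zeta^2+11\zeta-1<0$ enters), derive a uniform bound on $\Lip(u_j)$, and pass to the limit via a compactness lemma for inverses of Lipschitz homeomorphisms (Lemma \ref{lem:appendix}). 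The uniform bound on $\Lip(u_j)$ is itself delicate near the two corners of $D$ where $\Delta_j(s)=\gamma_2(\lambda_j(s))-\gamma_1(s)\to0$; the paper controls this using convexity to show $|\psi_j(s)|\lesssim\Delta_j(s)$, a step your sketch does not anticipate and that cannot be skipped when the datum is only Lipschitz. By contrast, your closing remarks on $C^{1,1}_H$ (that $\cB u$ is constant along the projected segments, equal to $(\phi_2(\sigma_2)-\phi_1(\sigma_1))/(\gamma_2(\sigma_2)-\gamma_1(\sigma_1))$, and inherits Lipschitz dependence in $s$) and on $C^k$ regularity via the implicit function theorem applied to $\cQ_\phi$ are correct and in fact supply details the paper leaves implicit.
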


The surface $R_\phi$---the horizontally ruled surface spanning $S_{\phi}$---is the candidate solution to the Plateau's Problem (\ref{PP}). 
The fact that it is a minimizer of the area functional $\cA_D$ in a suitable class of competitors will be proved by a calibration argument.

While $R_\phi=S_u$ is a left intrinsic graph, the calibration is constructed using the right-invariant structure of $\He$.
We   consider the flow $\Phi^r:\R\times\He\to\He$ of the right-invariant vector field  $X^r$ defined, at each point $p = (x,y,t) \in \He$, as:
\[
X^{r} (p)= \dfrac{\partial}{\partial x} - 2y \dfrac{\partial}{\partial t}.
\]
We let $\Phi^r_s(p) =\Phi^r(s,p)$ for $s\in\R$ and $p\in\He$, and  we consider the right cylinder over 
a domain $D^r\subset\mathbb W$, 
\begin{equation*} 
	\mathbb V \cdot D^ r:= \{ \Phi^r_s(y,t)=(s, y,  t - 2  y s) \in\He  \, : \, (y,t)  \in D^r, \, s \in \R \} .
\end{equation*}
The right graph and epigraph of a function $v: D^r\to \R$ are defined as:
\begin{equation*}
	\begin{split}
		S^r_v  & =\big \{ \Phi_{s}^r(y,t) \in\He  \, : \, (y,t) \in D^r,\, s= v(y,t)\big  \},
		\\
		E^r_v  &=\big \{ \Phi_{s}^r(y,t) \in\He  \, : \, (y,t) \in D^r,\, s> v(y,t)\big  \}.
	\end{split}
\end{equation*}
\noindent
The calibration can be  constructed in the cylinder $\mathbb V \cdot D^ r $ thanks to the following:

\begin{proposition} \label{PROP}
	Let $u \in \Lip(D)$ and $S_u = R_\phi$ be given by (i) and (ii) in Theorem \ref{thm:main_1_intro} for a convex domain $D = D_{\gamma_1,\gamma_2} \subset \mathbb{W}$.
	If $\zeta < (\sqrt{721} - 25)/48$, then there exists an open set $D^r \subset \mathbb{W}$ and $u^r \in \Lip_{loc}(D^r)$ such that $S_u = S_{u^r}^r$.
\end{proposition}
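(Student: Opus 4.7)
The plan is to produce $u^r$ via a change of parameters that recasts the left intrinsic graph $S_u$ as a right intrinsic graph. A generic point of $S_u$ has the form $p = (u(y_0,t_0),\, y_0,\, t_0 + 2 y_0 u(y_0,t_0))$ for $(y_0,t_0)\in D$; writing the same point as $\Phi^r_s(y',t') = (s,\, y',\, t' - 2y's)$ forces $y' = y_0$, $s = u(y_0,t_0)$, and $t' = t_0 + 4 y_0 u(y_0,t_0)$. This motivates the map
\begin{equation*}
	\Psi : D \to \mathbb{W}, \qquad \Psi(y,t) := \big(y,\, t + 4 y\, u(y,t)\big),
\end{equation*}
and the tentative definitions $D^r := \Psi(D)$ and $u^r := u \circ \Psi^{-1}$. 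With these choices the identity $S_u = S^r_{u^r}$ is tautological; all the content is in showing that $\Psi$ is a bi-Lipschitz homeomorphism onto an open set.

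Since $\Psi$ preserves the first coordinate, its injectivity reduces to the strict monotonicity of $t \mapsto t + 4 y_0 u(y_0,t)$ along each slice $\{y_0\}\times J \subset D$, which is connected by convexity of $D$. Setting $L := \|\partial_t u\|_{L^\infty(D)}$, the relation $\Psi(y,t_1)=\Psi(y,t_2)$ gives $|t_1-t_2| \leq 4|y| L |t_1-t_2|$, so the sufficient condition for injectivity is
\begin{equation*}
	\kappa := 4 \|\gamma\|_\infty \, L < 1.
\end{equation*}
Under the same inequality, $\det D\Psi(y,t) = 1 + 4 y \partial_t u(y,t) \geq 1 - \kappa > 0$ a.e., so $\Psi$ is bi-Lipschitz onto its image, $D^r$ is open, and $u^r \in \Lip_{loc}(D^r)$, concluding the proposition.

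The main obstacle, and the source of the precise threshold $(\sqrt{721}-25)/48$, is the quantitative estimate of $L$ in terms of $\zeta$. From the ruled parametrization $\rho$ built in Theorem \ref{thm:main_1_intro}, the function $u$ is determined implicitly by $\rho(h(y,t), s(y,t)) = (u(y,t), y, t + 2y u(y,t))$ for a unique $(h,s) \in Q$; differentiating this identity in $t$ yields a linear system for $(\partial_t h, \partial_t s, \partial_t u)$ whose determinant is the nondegeneracy quantity already controlled in the proof of Theorem \ref{thm:main_1_intro}(ii) under the weaker condition $\zeta < (\sqrt{129}-11)/4$, and whose entries are controlled by the basic data $\|\gamma\|_\infty, \Lip(\gamma), \|\phi\|_\infty, \Lip(\phi)$. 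Solving the system and grouping the bounds optimally through the single parameter $\zeta$ produces an inequality of the form $4\|\gamma\|_\infty L \leq F(\zeta)$ with $F$ rational, and the stated threshold is exactly the condition $F(\zeta)<1$; extracting this sharp arithmetic, rather than a loose one, is the delicate but essentially computational core of the proof.
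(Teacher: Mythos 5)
Your map $\Psi(y,t)=(y,\,t+4y\,u(y,t))$ is exactly the object the paper studies in disguise: it equals $G\circ F^{-1}$, where $F=\pi\circ\rho$ and $G=\pi^r\circ\rho$ are the left and right projections of the ruled parametrization (cf. Remark~\ref{rem:u_stationary}), so the geometric idea of the proposal is correct and the reduction to monotonicity of $t\mapsto t+4yu(y,t)$ along each slice is the right one. The gap is in the quantitative step. The sufficient condition you propose, $\kappa:=4\|\gamma\|_\infty\|\partial_t u\|_{L^\infty(D)}<1$, cannot be reduced to a smallness condition on $\zeta$ alone. In the paper's own estimate of $\Lip(u)$ (Step 4 of Theorem \ref{thm:sx_graph}), the bound for $\partial_t u$ passes through the quotient $|\psi_j|/\Delta_j$, which is controlled via the geometric constant
$C=\min\{|\gamma_1(t_1)|/t_1,\ \gamma_2(t_1)/t_1\}$; this constant depends on $\gamma_1,\gamma_2$ in a way that is \emph{not} captured by $\zeta=4(\|\gamma\|_\infty+\Lip(\gamma))(\|\phi\|_\infty+\Lip(\phi))$. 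For instance, fixing $\phi$, taking $\gamma_2(t)=\varepsilon\sin(\pi t/\bar t)=-\gamma_1(t)$ and sending $\varepsilon\to 0$, $\bar t\to\infty$ keeps $\zeta$ arbitrarily small while $\|\gamma\|_\infty\|\partial_t u\|_\infty$ stays of order $\Lip(\phi)\,\bar t$. So the final inequality $4\|\gamma\|_\infty L\le F(\zeta)<1$ that your plan requires does not exist.

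What actually makes the argument close is to control the \emph{pointwise} quantity $1+4y\,\partial_t u(y,t)$ rather than the product of sup-norms $\|\gamma\|_\infty\cdot\|\partial_t u\|_\infty$: the factor $y$ is comparable to $\Delta_j(s)=\gamma_2(\lambda(s))-\gamma_1(s)$ exactly where $\partial_t u$ might degenerate, and the ratio $|y|/\delta_j\le \Delta_j/\delta_j\le 2$ replaces the uncontrolled $\|\gamma\|_\infty/\delta_j$. The paper achieves this automatically by never leaving the $(h,s)$-coordinates of the ruled parametrization: it shows that $\widetilde\tau_y(s):=G_2(h_y(s),s)=\tau_y(s)+H_2(h_y(s),s)$ is strictly increasing, using the lower bound \eqref{TAU} for $\tau_y'$ from Theorem \ref{thm:sx_graph} together with a direct estimate of $[H_2(h_y,s)]'$, all expressed purely in $\zeta$; the threshold $(\sqrt{721}-25)/48$ is then the explicit root of the resulting rational inequality. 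You should recast your linear-system computation so that the derivative of $t\mapsto t+4yu(y,t)$ is estimated before taking any uniform norm over $D$ (or simply work in $(h,s)$ as the paper does); otherwise the domain-dependent constant will not drop out. Finally, your proposal does not address the local Lipschitz regularity of $u^r$, which in the paper requires a separate approximation argument (regularization of $\phi$, $\gamma_1,\gamma_2$, and passage to the limit via Lemma \ref{lem:appendix}).
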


\noindent 
Namely, for small $\zeta$ the left intrinsic graph $S_u$ is also a right intrinsic graph over a suitable domain $D^r\subset\mathbb W$.

Now consider the measure theoretic inner normal $\nu_{E_u}=\omega_1 X + \omega_2 Y$: this is the unit horizontal vector field,  defined for $\mathscr H^2$-a.e.~$p\in R_\phi=S_u$, such that for all $V= v_1 X + v_2 Y \in C_c^1 (D\cdot \mathbb V;H)$ there holds
\begin{equation}
	\label{NN}
	\int_{D\cdot \mathbb V} \div V d\cL = - \int _{S_u} ( \omega_1 v_1 + \omega_2 v_2)  d \mu_{E_u},
\end{equation}
where $ \mu_{E_u}$ is the $H$-perimeter measure associated with the epigraph $E_u$ (see \cite{FSSC_Meyers_Serrin_96}).
The existence of $\nu_{E_u}$ is given by Riesz' representation theorem and its regularity follows from the regularity of $S_u$.

The $0$-homogeneous extension of $\nu_{E_u}$  along the integral lines of $X^r$ defines a unit horizontal vector field $\omega=\omega_1 X + \omega_2 Y$  in the right cylinder $\mathbb V\cdot D^r$ such that $\omega =\nu_{E_u}$ on $S_u$ and 
\begin{equation}\label{omega}
	\div \omega (p) = X\omega_1(p) + Y\omega_2(p) =0, \qquad \textrm{for $\cL$-a.e.~$p\in \mathbb V\cdot D^r$}.
\end{equation}
This follows from the fact that the diffeomorphisms $\Phi^r_s:\He\to\He$, $s\in\R$, are contact and isometric, and from the ruled property of $S_u$.

The vector field $\omega$ calibrates $S_u$ in $\mathbb V\cdot D^r$ in the sense of Barone Adesi-Serra Cassano-Vittone \cite{BaroneAdesi_SerraC_Vitt_2007}, and $u$ solves   Plateau's Problem (\ref{PP}) in the following sense:

\begin{theorem} \label{thm:main_2_intro}
	
	Let $\gamma_1,\gamma_2\in\Lip(I)$ be as in \eqref{gamma}, and let $D = D_{\gamma_1,\gamma_2} \subset \mathbb{W}$ be convex,
	let $\phi \in C(\de D)$ with $\phi_1,\phi_2\in\Lip(I)$ as in \eqref{phi}, and let the parameter $\zeta$ in \eqref{zeta} be such that:
	\begin{equation*}
		\zeta < \dfrac{\sqrt{721} - 25}{48} .
	\end{equation*}
	Finally, let $D^r\subset \mathbb W$ and $u^r$ be as in Proposition \ref{PROP}.
	Then the function $u\in \Lip(D)$ constructed in Theorem \ref{thm:main_1_intro} satisfies the minimality property:
	\begin{equation*}
		\cA_D(u) = \p_H(E_u ; D\cdot \mathbb V)=
		\p_H(E^r_{u^r}  ;\mathbb V\cdot  D ^r)
		\leq 
		\p_H(F;\mathbb V\cdot  D ^r),
	\end{equation*}
	for any $\cL$-measurable set $F\subset \He$ such that $E^r_{u^r} \difsim F \Subset \mathbb V\cdot  D ^r$.
\end{theorem}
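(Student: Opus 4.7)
The proof naturally splits into three parts. The equality $\cA_H(u) = \p_H(E_u; D\cdot \mathbb V)$ is nothing but the area formula \eqref{eq:hor_per_X_intr_epi}, applied to the Lipschitz function $u \in \Lip(D)$ produced by Theorem \ref{thm:main_1_intro}. For the equality $\p_H(E_u; D\cdot \mathbb V) = \p_H(E^r_{u^r}; \mathbb V\cdot D^r)$, I would observe that by Proposition \ref{PROP} the common topological boundary $S_u = S^r_{u^r}$ is a $C^{1,1}_H$-regular surface lying in both cylinders, and that $E_u$ and $E^r_{u^r}$ coincide in a tubular neighborhood of it, because both realize the same half-space on the positive-$x$ side of $S_u$ (the smallness of $\zeta$ forces both inner $X$- and $X^r$-components of $\nu_{E_u}$ to be strictly positive along $S_u$). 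The perimeter measures $\mu_{E_u}$ and $\mu_{E^r_{u^r}}$ are therefore supported on $S_u$ with identical densities, and each cylinder contains all of $S_u$, so both sides equal the intrinsic area of $S_u$.

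For the minimality inequality, I would invoke the Barone Adesi--Serra Cassano--Vittone calibration scheme via the unit horizontal field $\omega$ introduced above the statement, for which $\div \omega = 0$ on $\mathbb V\cdot D^r$ and $\omega = \nu_{E^r_{u^r}}$ along $S^r_{u^r}$. Fix $F$ with $E^r_{u^r}\difsim F \Subset \mathbb V\cdot D^r$ and $\epsilon>0$, and pick a cutoff $\eta\in C_c^\infty(\mathbb V\cdot D^r)$ with $0\le\eta\le 1$, $\eta\equiv 1$ on a compact set $K\supset E^r_{u^r}\difsim F$ chosen large enough that $\mu_{E^r_{u^r}}(\mathbb V\cdot D^r \setminus K) < \epsilon$. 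Mollifying the Lipschitz, compactly supported field $-\eta\omega$ produces admissible test fields $V_n\in C_c^1(\mathbb V\cdot D^r;H)$ with $\|V_n\|_\infty\le 1$, so by the definition of $\p_H$ and a passage to the limit,
\[
\p_H(F;\mathbb V\cdot D^r) \ge -\int_F \div(\eta\omega)\, d\cL.
\]
Using $\div\omega = 0$, the integrand equals $(X\eta)\omega_1+(Y\eta)\omega_2$, which is supported on $\{\eta<1\}$; on this set $\chi_F = \chi_{E^r_{u^r}}$ by the choice of $K$. Replacing the integration set by $E^r_{u^r}$ and applying the Gauss--Green formula for Caccioppoli sets, together with $\omega = \nu_{E^r_{u^r}}$ on $S^r_{u^r}$, yields
\[
-\int_F \div(\eta\omega)\, d\cL = -\int_{E^r_{u^r}} \div(\eta\omega)\, d\cL = \int_{S^r_{u^r}} \eta\, d\mu_{E^r_{u^r}} \ge \mu_{E^r_{u^r}}(K) > \p_H(E^r_{u^r};\mathbb V\cdot D^r) - \epsilon.
\]
Letting $\epsilon\to 0$ gives the stated minimality.

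The principal obstacle is legitimating $\eta\omega$ as a test vector field: since $u\in C^{1,1}_H$ and $\Phi^r$ is smooth, $\omega$ is only Lipschitz in $\mathbb V\cdot D^r$, so admissibility in the definition of $\p_H$ requires a careful mollification argument that preserves the bound $|\cdot|\le 1$ and produces convergence of the divergence integrals. A secondary subtlety is the rigorous identification of $E_u$ with $E^r_{u^r}$ in a full neighborhood of $S_u$ used in the second equality, which ultimately rests on the strict positivity of the $X^r$-component of $\nu_{E_u}$ along $S_u$ and hence on the smallness assumption $\zeta < (\sqrt{721}-25)/48$ inherited from Proposition \ref{PROP}.
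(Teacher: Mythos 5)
Your proof follows essentially the same route as the paper. The first equality is the area formula, and for the third step you use the very calibration vector field $\omega$ of Lemma \ref{lem:calibration}; the only difference is that the paper (Theorem \ref{thm:E_minimizer_P_H}) delegates the calibration argument to Theorem 2.1 of \cite{BaroneAdesi_SerraC_Vitt_2007}, whereas you unpack that citation into an explicit cutoff-and-mollification argument, which is fine modulo the admissibility issue you yourself flag (a sub-Riemannian mollification, e.g.\ by group convolution, is needed since Euclidean convolution does not commute with $X$ and $Y$; this is exactly what the cited reference handles). For the middle equality the paper uses the pointwise area-density formula \eqref{eq:formula_per_H_Lip}, $\p_H(E_u;D\cdot\mathbb V)=\int_{S_u}|N_{E_u}|\,d\mathscr H^2=\p_H(E^r_{u^r};\mathbb V\cdot D^r)$, which is the cleaner way to phrase your ``same surface, same density'' observation: the density $|N_E|$ depends only on the surface $S_u=S^r_{u^r}$ and both cylinders contain the whole surface, so no separate tubular-neighborhood argument about the coincidence of $E_u$ and $E^r_{u^r}$ is needed.
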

\noindent

It is natural to ask which conditions must be satisfied by an intrinsic Lipschitz map $v : D \to \R$ in order to have $\cA_D(u) \leq \cA_D(v)$. In Lemma \ref{lem:minimization_intr_gr}, we provide a sufficient condition on $v$ ensuring this property, namely that $v|_{\de D} = \phi$ and that $S_v$ is the right intrinsic graph of some $v^r : D^r \to \R$.
Despite this restriction on competitors, $u$ minimizes $\cA_D$ in a sufficiently large class that includes compact variations of $u$. This ensures that $u$ is stationary for $\cA_D$ (see Remark \ref{rem:u_stationary}).

In the final section, we show how the theory developed in this work can be used to obtain regularity results for minimizers of $\cA_D$. In Theorem \ref{thm:reg}, we prove that a minimizer is horizontally ruled and belongs to $\Lip(D) \cap C^{1,1}_H(D)$, starting from an initial regularity lower than Lipschitz. This result improves upon Theorem 3.7 in \cite{Giovannardi_Ritore_2024} by Giovannardi and Ritoré,
which concerns the broader setting of stationary points of $\cA_D$ (see also \cite{Galli_Ritore_2015}).
More generally, most regularity results in the literature deal with Lipschitz stationary points of $\cA_D$, see \cite{CCM_H1_2009,CCM_smooth_2010}. The motivation is, again, related to the non-regularity of $H$-minimal surfaces,
which prevents the hope of proving more general regularity results for stationary points of $\cA_D$. An exception is \cite{Young_Harmonic_2023}, where the author addresses the problem of approximating the $H$-area  of intrinsic Lipschitz graphs with small intrinsic gradient by means of a suitable intrinsic Dirichlet energy, with the ultimate goal of reproducing the Euclidean approach to studying regularity.

We conclude by highlighting an interesting consequence for the Bernstein problem in $\He$, which follows from our regularity Theorem \ref{thm:reg} in combination with \cite[Theorem 1.3]{Young_Area_min_ruled_22}. Assume that $u : \mathbb{W} \to \R$ is a function such that its epigraph $E_u$ is $H$-perimeter-minimizing in $\He$ with respect to compact perturbations, that is,
\begin{equation*}
	\p_H(E_u;\Omega') \leq \p_H(F;\Omega') \qquad \text{whenever} \qquad E_u \difsim F \Subset \Omega' \Subset \He.
\end{equation*}
The Bernstein problem asks whether $E_u$ must be a vertical plane. Theorem 1.3 in \cite{Young_Area_min_ruled_22} states that this is the case when $E_u$ is foliated by horizontally straight segments. This allows us to conclude that $E_u$ is a vertical plane whenever $u$ satisfies the assumptions of Theorem \ref{thm:reg}, thereby improving the current results in the literature concerning the Bernstein problem in $\He$, which require Lipschitzianity as minimal assumption.
%In fact, these two results together extend th

\subsection{Organization of the paper.}
The paper is organized as follows. In Section \ref{INTERPOL}, and in particular in Theorem \ref{lem:exlambda}, we construct a horizontally ruled parametric surface spanning a given boundary datum. In Section \ref{GRAPH}, we show that this parametric surface is both a left intrinsic graph (Theorem \ref{thm:sx_graph}) and a right intrinsic graph (Theorem \ref{lem:dx_graph}). Section \ref{CAL} contains the calibration argument, which allows us to demonstrate that this surface is area-minimizing with respect to a suitable class of competitors, see Theorem \ref{thm:E_minimizer_P_H}. Finally, Section \ref{REG} is devoted to the proof of Theorem \ref{thm:reg}, in which we establish a new regularity result for minimizers of $\cA_D$.

\section{Horizontal interpolation of a boundary datum}
\label{INTERPOL}

In this section, we construct a horizontally ruled parametric surface spanning a Lipschitz continuous boundary
datum defined  on the boundary $\de D$ of an open domain $D \subset \W$ of the form
%
%
% Let us recall some notation and definitions furnished in the introduction.
%Up to 
%a left translation,
%a lenticular domain is an open set $D \subset \mathbb{W}$ of the form
\begin{equation*}
	D = D_{\gamma_1,\gamma_2} := \{ (y,t) \in \mathbb{W} \, : \, t \in I, \, \gamma_1(t) < y < \gamma_2(t) \} .
\end{equation*}
%where $I = (0,\bar{t})$, for some $\bar{t} > 0$, and $\gamma_1, \gamma_2 : I \to \R$ are Lipschitz continuous functions such that
%\begin{equation*}
%	\gamma_1(t) < 0 < \gamma_2(t) \quad \text{for all $t \in I$} \qquad \text{and} \qquad \gamma_1(0) = \gamma_1(\bar{t}) = \gamma_2(0) = \gamma_2(\bar{t}) = 0 .
%\end{equation*}

%
%
%\noindent 
%We recall that, given a continuous function $\phi : \de D \to \R$, we define the compositions
%$\phi_1,\phi_2:\bar I \to \R$, 
%$\phi_1(s) := \phi(\gamma_1(s),s)$, $\phi_2(s) := \phi(\gamma_2(s),s)$,
%and the corresponding left intrinsic graphs $p_1,p_2:\bar I \to \R$:
%\begin{align} \label{eq:def_p_1}
%	& \bar{I} \ni s \mapsto p_1(s) := \Phi_{\phi_1(s)}(\gamma_1(s),s) = (\phi_1(s), \, \gamma_1(s), \, s + 2 \gamma_1(s) \phi_1(s)) , \\
%	& \bar{I} \ni s \mapsto p_2(s) := \Phi_{\phi_2(s)}(\gamma_2(s),s) = (\phi_2(s), \, \gamma_2(s), \, s + 2 \gamma_2(s) \phi_2(s)) . \label{eq:def_p_2}
%\end{align}
%We use the following notation:
%\begin{align*}
%	& \| \gamma \|_{\infty} := \max \{ \| \gamma_1 \|_{\infty} , \| \gamma_2 \|_{\infty} \},
%	\qquad \Lip(\gamma) := \max\{ \Lip(\gamma_1), \Lip(\gamma_2) \} , \\
%	& \| \phi \|_{\infty} := \max \{ \| \phi_1 \|_{\infty} , \| \phi_2 \|_{\infty} \},
%	\qquad \Lip(\phi) := \max \{ \Lip(\phi_1), \Lip(\phi_2) \} ,
%\end{align*}
%and we set %$\zeta := 4 (\| \gamma \|_{L^{\infty}(I)} + \Lip(\gamma)) (\| \phi \|_{L^{\infty}(I)} + \Lip(\phi))$.
%\begin{equation}
%	\zeta := 4 (\| \gamma \|_{\infty} + \Lip(\gamma)) (\| \phi \|_{\infty} + \Lip(\phi)) .
%\end{equation}
%The constant $\zeta$ depends on $\gamma_1,\gamma_2$, and $\phi$.
\noindent
The notations are those given in the Introduction.

\begin{theorem} \label{lem:exlambda}
	Let $D = D_{\gamma_1,\gamma_2} \subset \mathbb{W}$ and $\phi :\partial D\to \R$ be such that:
	\begin{equation} \label{eq:condlambda}
		%\zeta = 4 (\| \gamma \|_{L^{\infty}(I)} + \Lip(\gamma,I)) (\| \phi \|_{L^{\infty}(I)} + \Lip(\phi,I)) < 1
		\zeta < 1 .
	\end{equation}
	Then there exists a bi-Lipschitz, increasing function $\lambda: \bar{I} \to \bar{I}$ such that:
	\begin{align} \label{eq:proplambda}
		\text{\emph{(i)}}\,\,\, & p_2(\lambda(s)) - p_1(s) \in H_{p_1(s),} \qquad \text{for all $s \in \bar{I}$,} \nonumber \\
		\text{\emph{(ii)}}\,\,\, & p_2(\lambda') - p_1(s) \in H_{p_1(s)} \text{  implies  } \lambda' = \lambda(s), \qquad \text{for all $s,\lambda' \in \bar{I}$,} \nonumber \\
		\text{\emph{(iii)}}\,\,\, & \lambda(0) = 0 \quad \text{and} \quad \lambda(\bar{t}) = \bar{t} , \\
		\text{\emph{(iv)}}\,\,\, & \dfrac{1 - \zeta}{1 + \zeta} \leq \Lip(\lambda) \leq \dfrac{1 + \zeta}{1 - \zeta} \quad \text{and}\quad \dfrac{1 - \zeta}{1 + \zeta} \leq \Lip(\lambda^{-1}) \leq \dfrac{1 + \zeta}{1 - \zeta} . %\qquad \text{for a suitable constant $C>0$.}
		\nonumber
	\end{align}
\end{theorem}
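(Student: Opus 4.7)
The plan is to reduce the horizontality condition $p_2(\lambda)-p_1(s)\in H_{p_1(s)}$ to a single scalar equation $F(s,\lambda)=0$ and then solve it globally via a Lipschitz implicit function argument driven by the smallness assumption $\zeta<1$. Recalling that at $p=(x,y,t)$ a vector $(a,b,c)$ lies in $H_p$ iff $c=2ya-2xb$, a direct substitution using the explicit expressions \eqref{eq:def_p_1}--\eqref{eq:def_p_2} for $p_1(s)$ and $p_2(\lambda)$ produces, after cancellation, the equation
\begin{equation*}
F(s,\lambda):=\lambda-s-2\bigl(\phi_1(s)+\phi_2(\lambda)\bigr)\bigl(\gamma_1(s)-\gamma_2(\lambda)\bigr)=0 .
\end{equation*}
So statements (i)--(iv) translate into: for every $s\in\bar I$ the equation $F(s,\lambda)=0$ has a unique solution $\lambda=\lambda(s)\in\bar I$, with $\lambda(0)=0$, $\lambda(\bar t)=\bar t$, and satisfying the stated bi-Lipschitz bounds.

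The second step is to prove monotonicity estimates on $F$ that will underlie both existence and the Lipschitz control. Writing $A_2B_2-A_1B_1=(A_2-A_1)B_2+A_1(B_2-B_1)$ with $A=\phi_1(s)+\phi_2(\cdot)$ and $B=\gamma_1(s)-\gamma_2(\cdot)$, and using $|\gamma_i|\le\|\gamma\|_\infty$, $\Lip(\gamma_i)\le\Lip(\gamma)$ and the analogous bounds for $\phi_i$, one gets for $\lambda_1<\lambda_2$ and any $s$
\begin{equation*}
(1-\zeta)(\lambda_2-\lambda_1)\le F(s,\lambda_2)-F(s,\lambda_1)\le(1+\zeta)(\lambda_2-\lambda_1),
\end{equation*}
and symmetrically, for $s_1<s_2$ and any $\lambda$,
\begin{equation*}
-(1+\zeta)(s_2-s_1)\le F(s_2,\lambda)-F(s_1,\lambda)\le -(1-\zeta)(s_2-s_1).
\end{equation*}
The numerical constant $\zeta$ is exactly what is needed, since the error terms coming from the product rule are bounded by $4\|\gamma\|_\infty\Lip(\phi)+4\|\phi\|_\infty\Lip(\gamma)\le\zeta$. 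Thanks to $\zeta<1$, $F(s,\cdot)$ is strictly increasing and $F(\cdot,\lambda)$ is strictly decreasing, which already gives uniqueness (statement (ii)).

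For existence I would apply the intermediate value theorem to $F(s,\cdot)$ after checking $F(s,0)\le 0\le F(s,\bar t)$ for every $s\in\bar I$. This is where condition \eqref{gamma} enters crucially: since $\gamma_2(0)=\gamma_2(\bar t)=0$ one has $F(s,0)=-s-2(\phi_1(s)+\phi_2(0))\gamma_1(s)$ and $F(s,\bar t)=(\bar t-s)-2(\phi_1(s)+\phi_2(\bar t))\gamma_1(s)$, and since $\gamma_1$ also vanishes at $0$ and $\bar t$,
\begin{equation*}
|\gamma_1(s)|\le\Lip(\gamma)\min\{s,\bar t-s\}.
\end{equation*}
This yields $F(s,0)\le-(1-\zeta)s\le 0$ and $F(s,\bar t)\ge(1-\zeta)(\bar t-s)\ge 0$, producing a unique $\lambda(s)\in[0,\bar t]$. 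Evaluating at $s=0$ and $s=\bar t$ and using $\phi_1(0)=\phi_2(0)$, $\phi_1(\bar t)=\phi_2(\bar t)$ (both consequences of $\gamma_i$ vanishing at the endpoints) gives (iii). Finally, the Lipschitz bounds follow from the identity $0=F(s_2,\lambda(s_2))-F(s_1,\lambda(s_1))$ split into a pure $\lambda$-variation plus a pure $s$-variation: combining the two monotonicity estimates yields $(1-\zeta)(\lambda(s_2)-\lambda(s_1))\le(1+\zeta)(s_2-s_1)$ and $(1+\zeta)(\lambda(s_2)-\lambda(s_1))\ge(1-\zeta)(s_2-s_1)$, which is (iv); by symmetry the same bounds hold for $\lambda^{-1}$ because $(1\pm\zeta)/(1\mp\zeta)$ are reciprocals.

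The genuine technical point of the argument is the correct bookkeeping in Step 2, ensuring that \emph{every} remainder produced by the product rule is absorbed into a single constant $\zeta<1$; the definition \eqref{zeta}, slightly redundant in that $\zeta$ dominates both a Lipschitz$\times$sup-norm cross-term and the pure product $\|\phi\|_\infty\|\gamma\|_\infty$, is tailored exactly to this need. All subsequent steps, in particular the boundary inequalities and the Lipschitz estimate for $\lambda$, then reduce to straightforward applications of the resulting two-sided bounds on $F$.
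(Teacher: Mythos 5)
Your proposal is correct and follows essentially the same route as the paper's proof: reduce horizontality to the scalar equation $\cQ_\phi(s,\lambda)=0$, prove existence by the intermediate value theorem on $[0,\bar t]$ using $\zeta<1$ and $\gamma_i(0)=\gamma_i(\bar t)=0$, and derive uniqueness and the bi-Lipschitz bounds from the same $\zeta$-controlled product-rule estimates; your packaging of those estimates as two-sided monotonicity of $F(s,\cdot)$ and $F(\cdot,\lambda)$ is a clean but equivalent reorganization of the paper's direct difference estimates. One small slip: for statement (iii), the reason $F(0,0)=F(\bar t,\bar t)=0$ is that $\gamma_1$ and $\gamma_2$ vanish at the endpoints (which kills the product term outright), not that $\phi_1(0)=\phi_2(0)$ and $\phi_1(\bar t)=\phi_2(\bar t)$ — the latter identities hold but play no role here.
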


\begin{proof}
	We claim that, for any $s \in \bar{I}$, there exists $\lambda \in \bar{I}$ such that:
	\begin{equation} \label{eq:belongstoHpf}
		p_2(\lambda) - p_1(s) \in H_{p_1(s)}.
	\end{equation}
	Recall that, for $s \in \bar{I}$, $H_{p_1(s)}$ expresses the horizontal distribution at $p_1(s)$, that is, the linear space spanned by
	\begin{equation*}
		X(p_1(s)) = (1, \, 0, \, 2 \gamma_1(s)) \qquad \text{and} \qquad Y(p_1(s)) = (0, \, 1, \, - 2 \phi_1(s)) .
	\end{equation*}
	Then $H_{p_1(s)}$ is uniquely determined by the Cartesian equation
	\begin{equation} \label{eq:carteqH}
		%	H_{p_1(s)} \, : \, 
		2 \gamma_1(s) x - 2 \phi_1(s) y - t = 0 ,
	\end{equation}
	and, by
	the definition of $p_1$ and $p_2$, we have
	\begin{equation*}
		p_2(\lambda) - p_1(s) = (\phi_2(\lambda) - \phi_1(s), \, \gamma_2(\lambda) - \gamma_1(s), \, \lambda + 2 \gamma_2(\lambda)\phi_2(\lambda) - s - 2 \gamma_1(s)\phi_1(s)) .
	\end{equation*}
	Thus, by \eqref{eq:carteqH}, condition \eqref{eq:belongstoHpf} holds if and only if $\lambda\in \bar I$ is a solution to the equation:
	\begin{equation} \label{eq:belongstoHpfzero} 		
		\cQ_{\phi}(s,\lambda) := \lambda - s + 2 (\gamma_2(\lambda) - \gamma_1(s))(\phi_2(\lambda) + \phi_1(s))
		= 0 .
	\end{equation}
	If $s = 0$ or  $s = \bar{t}$, the solution is  $\lambda = 0$ or $\lambda = \bar{t}$, respectively. 
	When $s \in I$, we  argue by continuity. 
	Note that: 
	\begin{align} \label{eq:values_q_extrem}
		\cQ_{\phi}(s,0) &= -s - 2 \gamma_1(s)(\phi(0,0) + \phi_1(s)) , 
		\\
		\quad \cQ_{\phi}(s,\bar{t}) &= \bar{t} - s - 2 \gamma_1(s)(\phi(0,\bar{t}) + \phi_1(s)) .
	\end{align}
	Since  $\gamma_1(0)= \gamma_1(\bar t)=0$, we deduce that 
	\begin{align} \label{eq:Thm_Zeri_qphi}
		& |2 \gamma_1(s)(\phi(0,0) + \phi_1(s))| \leq 4 \Lip(\gamma) \| \phi \|_{\infty} |s| \leq \zeta |s|, \\
		& |2 \gamma_1(s)(\phi(0,\bar{t}) + \phi_1(s))| \leq 4 \Lip(\gamma) \| \phi \|_{\infty} |s-\bar{t}| \leq \zeta |s - \bar{t}| , \nonumber %\label{eq:Thm_Zeri_qphi_2}
	\end{align}
	and therefore \eqref{eq:condlambda}, \eqref{eq:values_q_extrem}, \eqref{eq:Thm_Zeri_qphi} guarantee that $\cQ_{\phi}(s,0) < 0$ and $\cQ_{\phi}(s,\bar{t}) > 0$. By  the continuity of $\lambda\mapsto \cQ_{\phi}(s,\lambda)$, there exists at least one $\lambda \in I$ solving \eqref{eq:belongstoHpfzero}, and consequently also \eqref{eq:belongstoHpf}. This concludes the proof of Claim (i).

	We prove Claim (ii). Let $s \in \bar{I}$, $\lambda, \lambda' \in \bar{I}$ be such that:
	\begin{equation*}
		\cQ_{\phi}(s,\lambda) = 0 , \qquad \cQ_{\phi}(s,\lambda') = 0 .
	\end{equation*}
	By the  definition of $\cQ_{\phi}$ in \eqref{eq:belongstoHpfzero}, we deduce that:
	\begin{align} \label{eq:lambda-mu}
		\lambda - \lambda' & = 2 \gamma_1(s)(\phi_2(\lambda) - \phi_2(\lambda')) - 2 (\gamma_2(\lambda) - \gamma_2(\lambda')) \phi_1(s) - 2(\gamma_2(\lambda) \phi_2(\lambda) -\gamma_2(\lambda') \phi_2(\lambda')) \nonumber \\
		& = 2 \gamma_1(s)(\phi_2(\lambda) - \phi_2(\lambda')) - 2 (\gamma_2(\lambda) - \gamma_2(\lambda')) \phi_1(s) - \\
		& \qquad \qquad \qquad  - 2 \gamma_2(\lambda)(\phi_2(\lambda) - \phi_2(\lambda')) - 2(\gamma_2(\lambda) - \gamma_2(\lambda')) \phi_2(\lambda') . \nonumber
	\end{align}
	The summands in the identity above can be estimated as follows:
	\begin{align} \label{eq:est|lambda-mu|}
		& 2 |\gamma_1(s)||\phi_2(\lambda) - \phi_2(\lambda')|| \leq 2 \, \| \gamma \|_{\infty} \Lip(\phi) \, |\lambda - \lambda'|, \nonumber \\
		& 2 |\gamma_2(\lambda) - \gamma_2(\lambda')| |\phi_1(s)| \leq 2 \, \Lip(\gamma) \| \phi \|_{\infty} |\lambda - \lambda'|, \\
		& 2 |\gamma_2(\lambda)||\phi_2(\lambda) - \phi_2(\lambda')| \leq 2 \| \gamma \|_{\infty} \Lip(\phi) \, |\lambda - \lambda'|, \nonumber \\
		& 2|\gamma_2(\lambda) - \gamma_2(\lambda')| |\phi_2(\lambda')| \leq 2 \, \Lip(\gamma) \| \phi \|_{\infty} |\lambda - \lambda'| . \nonumber
	\end{align}
	Hence, by assumption
	\eqref{eq:condlambda}, \eqref{eq:lambda-mu} and \eqref{eq:est|lambda-mu|} ensure that $\lambda = \lambda'$. 
	This concludes the proof of Claims (ii) and (iii).

	%Therefore we proved that, for all $s \in \bar{I}$, there exists exactly one $\lambda = \lambda(s) \in \bar{I}$ satisfying the first three properties written in \eqref{eq:proplambda}. 

	We are left with the proof of (iv). 
	Let  $s,s' \in I$ and $\lambda := \lambda(s)$, $\lambda' := \lambda(s')$. To  estimate $|\lambda - \lambda'|$, we start from the identity 
	\begin{align} \label{eq:lambda-lambda'}
		0 & = \cQ_{\phi}(s,\lambda) - \cQ_{\phi}(s',\lambda') \nonumber \\
		& =  \lambda - \lambda' + s - s' - 2 \gamma_1(s) (\phi_2(\lambda) + \phi_1(s)) + 2 \gamma_1(s') (\phi_2(\lambda') + \phi_1(s')) + \nonumber \\
		& \qquad \qquad \qquad \qquad \qquad \qquad \quad + 2 \gamma_2(\lambda)(\phi_1(s) + \phi_2(\lambda)) - 2 \gamma_2(\lambda')(\phi_1(s') + \phi_2(\lambda')) \nonumber \\
		& = \lambda - \lambda' + s - s' - 2 (\gamma_1(s) \phi_2(\lambda) - \gamma_1(s') \phi_2(\lambda')) - 2 (\gamma_1(s) \phi_1(s) - \\
		& \qquad - \gamma_1(s') \phi_1(s')) + 2 (\gamma_2(\lambda) \phi_1(s) -  \gamma_2(\lambda') \phi_1(s')) + 2 (\gamma_2(\lambda) \phi_2(\lambda) -  \gamma_2(\lambda')\phi_2(\lambda')) . \nonumber
	\end{align}
	By the  estimates:
	\begin{align} \label{eq:est|lambda-lambda'|}
		& 2 |\gamma_1(s) \phi_2(\lambda) - \gamma_1(s') \phi_2(\lambda')| \leq 2 \, \Lip(\gamma) \| \phi \|_{\infty} |s-s'| + \nonumber \\
		& \qquad \qquad \qquad \qquad \qquad \qquad \qquad \qquad \qquad + 2 \| \gamma \|_{\infty} \Lip(\phi) \, |\lambda - \lambda'|, \nonumber \\
		& 2 |\gamma_1(s) \phi_1(s) - \gamma_1(s') \phi_1(s')| \leq 2 \, \Lip(\gamma) \| \phi \|_{\infty} |s-s'| + \nonumber \\
		& \qquad \qquad \qquad \qquad \qquad \qquad \qquad \qquad \qquad + 2 \| \gamma \|_{\infty} \Lip(\phi) \, |s-s'|, \\
		& 2 |\gamma_2(\lambda) \phi_1(s) -  \gamma_2(\lambda') \phi_1(s')| \leq 2 \, \Lip(\gamma) \| \phi \|_{\infty} |\lambda - \lambda'| + \nonumber \\
		& \qquad \qquad \qquad \qquad \qquad \qquad \qquad \qquad \qquad + 2 \| \gamma \|_{\infty} \Lip(\phi) \, |s - s'|, \nonumber \\
		& 2 |\gamma_2(\lambda) \phi_2(\lambda) -  \gamma_2(\lambda')\phi_2(\lambda')| \leq 2 \, \Lip(\gamma) \| \phi \|_{\infty} |\lambda - \lambda'| + \nonumber \\ 
		& \qquad \qquad \qquad \qquad \qquad \qquad \qquad \qquad \qquad + 2 \| \gamma \|_{\infty} \Lip(\phi) \, |\lambda - \lambda'|  , \nonumber 
	\end{align}
	from
	\eqref{eq:lambda-lambda'} and \eqref{eq:est|lambda-lambda'|} we deduce that
	\begin{equation*}
		(1 - \zeta)|\lambda - \lambda'| \leq (1 + \zeta) |s - s'| \qquad \text{and} \qquad (1 + \zeta)|\lambda - \lambda'| \geq (1 - \zeta) |s - s'| .
	\end{equation*}
	In particular, we have
	\begin{equation*}
		\Lip(\lambda) \leq \dfrac{1 + \zeta}{1 - \zeta} \qquad \text{and} \qquad  \Lip(\lambda) \geq \dfrac{1 - \zeta}{1 + \zeta} .
	\end{equation*}
	This concludes the proof.

	%	
	%	Let us fix $\lambda \in \bar{I}$. If we start from $p_2(\lambda)$ and we look for $s \in \bar I$ such that $p_1(s) - p_2(\lambda) \in H_{p_2(\lambda)}$, the %same argument just exposed permits us to derive the existence of a monotonically increasing, Lipschitz function $s : \bar{I} \to \bar{I}$ such that
	%	\begin{align*}
		%		& p_1(s(\lambda)) - p_2(\lambda) \in H_{p_2(\lambda)} \qquad \text{for any $\lambda \in \overline{I}$,} \nonumber \\
		%		& p_1(s) - p_2(\lambda) \in H_{p_2(\lambda)} \text{  implies  } s = s(\lambda) \qquad \text{for any $s \in \bar{I}$,} \\
		%		& s(0) = 0 \quad \text{and} \quad s(\bar{t}) = \bar{t} \, , \nonumber \\
		%		& \dfrac{1 - \zeta}{1 + \zeta} \leq \Lip(s,I) \leq \dfrac{1 + \zeta}{1 - \zeta} \, . %\qquad \text{for a suitable constant $C>0$.}
		%		\nonumber
		%	\end{align*}
	%	Clearly, the function $s \equiv \lambda^{-1}$ inverts $\lambda$, and this suffices to conclude that $\lambda$ is bi-Lipschitz, and in particular that both $%\Lip(\lambda,I)$ and $\Lip(\lambda^{-1},I)$ fulfil the bounds claimed in \eqref{eq:proplambda}. This closes the proof.
	
\end{proof}

\begin{remark} \label{rem:smooth_lambda}
	The function $\lambda$ introduced in   Theorem \ref{lem:exlambda}
	has the same regularity of  $\phi$ and $\gamma_1,\gamma_2$.
	Namely,  if  $\phi_1, \phi_2, \gamma_1,\gamma_2 \in C^k(I)$  then $\lambda \in C^k(I)$, $k\in\N$. This is an immediate consequence of the Implicit Function Theorem applied to $\cQ_{\phi}(s,\lambda)$. In fact,  for $\zeta<1$ we have
	\begin{equation*}
		\dfrac{\de}{\de \lambda} \, \cQ_{\phi}(s,\lambda) = 1 + 2 \gamma_2'(\lambda)(\phi_2(\lambda) + \phi_1(s)) + 2(\gamma_2(\lambda) - \gamma_1(s)) \phi_2'(\lambda) \geq 1 - \dfrac{\zeta}{2} > 0 .
	\end{equation*}
\end{remark}

\begin{remark} \label{rem:approx_lambda}
	For all $j \in\N$, let $\phi^j : \de D \to \R$ be continuous functions such that, denoting by $\phi_1^j(s) := \phi^j(\gamma_1(s),s)$ and $\phi_2^j(s) := \phi^j(\gamma_2(s),s)$, $\phi^j_1$ and $\phi^j_2$ are Lipschitz in $I$, and
	\begin{equation} \label{eq:approx_phi}
		\phi_1^j , \, \phi_2^j \in C^{1}(I) \qquad \text{and}
		\qquad \| \phi_1^j - \phi_1 \|_{\infty}, \, \| \phi_2^j - \phi_2 \|_{\infty} \to 0, 
	\end{equation}
	Also assume that $\gamma_1,\gamma_2\in C^1(I)$.
	
	By Remark \ref{rem:smooth_lambda}, 
	the function
	$\lambda^j$  solving  $\cQ_{\phi^j}(s,\lambda^j(s)) = 0$, $s \in \bar{I}$,
	satisfies 
	$\lambda^j\in C^1(I)$, for all $j \in\N$. 
	Moreover,  $\lambda^j \to \lambda$ uniformly on $I$. Indeed, using  $\cQ_{\phi}(\lambda(s),s) - \cQ_{\phi^j}(\lambda^j(s),s) = 0$, for $s \in I$, and reasoning as in the proof of Theorem \ref{lem:exlambda}, we obtain the estimates:
	\begin{equation*}
		(1 - \zeta) |\lambda^j(s) - \lambda(s)| \leq 4 \| \gamma \|_{\infty} (\| \phi_1^j - \phi_1 \|_{\infty} + \| \phi_2^j - \phi_2 \|_{\infty}) .
	\end{equation*}
	Provided $\zeta < 1$, this implies that $\| \lambda^j - \lambda \|_{\infty} \to 0$.
\end{remark}

Under the assumption $\zeta<1$, let $\lambda:\bar I \to \bar I$ be the function given by Theorem \ref{lem:exlambda}.
On the domain $Q := (0,1) \times I$, consider the function $\rho : Q \to \He$:
\begin{equation*}
	\rho(h,s) = (1-h) \, p_1(s) + h \, p_2(\lambda(s)) ,
\end{equation*}
where $p_1$ and $p_2$ are as in \eqref{eq:def_p_1} and \eqref{eq:def_p_2}, respectively. 
The components of $\rho = (\rho_1,\rho_2,\rho_3)$ have the following form, for $(h,s) \in \bar{Q}$:
\begin{align} \label{eq:def_rho}
	\rho_1(h,s) & := (1-h)\phi_1(s) + h \, \phi_2(\lambda(s)), \nonumber \\
	\rho_2(h,s) & := (1 - h) \gamma_1(s) + h \gamma_2(\lambda(s)), \\
	\rho_3(h,s) & := (1-h) s + h \lambda(s) + 2(1 - h) \gamma_1(s) \phi_1(s) + 2 h \gamma_2(\lambda(s)) \phi_2(\lambda(s)) . \nonumber
\end{align}
The set $R_{\phi} := \rho(Q)\subset \He$ is a (the) horizontally ruled, parametric surface that spans the intrinsic graph of $\phi$.

\section{Intrinsic graph structure of the horizontally ruled surface $R_{\phi}$}
\label{GRAPH}

Let $D\subset\mathbb W$ and $\phi:\partial D \to \R$ be as in the previous section.
We prove that  for small $\zeta<1$ the parametric surface $R_{\phi}$ introduced via \eqref{eq:def_rho} is the the left intrinsic graph of a function $u:D\to\R$ such that $u = \phi$ on $\de D$. 
We will   show that it is also  a right intrinsic graph on a suitable domain in $\mathbb W$.

In this section,  $D = D_{\gamma_1,\gamma_2} \subset \mathbb{W}$ is a convex domain, i.e., $\gamma_1$ is convex and $\gamma_2$ is concave. We call such a domain a \emph{lenticular domain}.

\subsection{Left intrinsic graph.}
We start by analyzing the left intrinsic graph structure of $R_{\phi}$. The set $R_{\phi} = \rho(Q)$ is introduced via \eqref{eq:def_rho}.
\begin{theorem} \label{thm:sx_graph}
	Let $D = D_{\gamma_1,\gamma_2} \subset \mathbb{W}$ be a lenticular domain and let $\phi : \de D \to \R$ be such that
	\begin{equation} \label{eq:cond_invert}
		\zeta < \dfrac{\sqrt{129} - 11}{4}.
	\end{equation}
	Then there exists a function $u: \bar{D} \to \R$ with the following properties:
	\begin{itemize}
		\item[(i)] $u$ represents $R_{\phi}$ as a left intrinsic graph on D, i.e.,
		\begin{equation} \label{eq:intr_graph_R}
			R_{\phi} = S_u = \{ (u(y,t), \, y, \, t + 2 y u(y,t)) \in \He \, : \, (y,t) \in D \} ;
		\end{equation}
		\item[(ii)] $u$ is Lipschitz continuous in $\bar{D}$;
		\item[(iii)] $u = \phi$ on $\de D$.
	\end{itemize} 
\end{theorem}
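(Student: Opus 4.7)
The plan is to construct $u$ by inverting the horizontal projection of the parametrization $\rho$ of $R_\phi$ onto the vertical plane $\W$. Define $\Psi:\bar Q\to\W$ by
\[
\Psi(h,s):=\big(\rho_2(h,s),\, \rho_3(h,s)-2\rho_1(h,s)\rho_2(h,s)\big),
\]
i.e.\ the $\W$-component of $\Phi_{-\rho_1(h,s)}(\rho(h,s))$, which is the unique point of $\W$ on the integral line of $X$ through $\rho(h,s)$. Writing $\Psi(h,s)=(y(h,s),t(h,s))$ and invoking $\cQ_\phi(s,\lambda(s))=0$ in the form $\lambda(s)-s=-2(\gamma_2(\lambda(s))-\gamma_1(s))(\phi_2(\lambda(s))+\phi_1(s))$, the expansion of $\rho_3-2\rho_1\rho_2$ collapses to
\[
y(h,s)=(1-h)\gamma_1(s)+h\gamma_2(\lambda(s)),\qquad t(h,s)=(1-h)s+h\lambda(s)+2h(1-h)(\gamma_1(s)-\gamma_2(\lambda(s)))(\phi_1(s)-\phi_2(\lambda(s))).
\]
Because $\gamma_i(0)=\gamma_i(\bar t)=0$ and $\lambda(0)=0$, $\lambda(\bar t)=\bar t$, one reads off the boundary behaviour $\Psi(0,s)=(\gamma_1(s),s)$, $\Psi(1,s)=(\gamma_2(\lambda(s)),\lambda(s))$, $\Psi(h,0)=(0,0)$, $\Psi(h,\bar t)=(0,\bar t)$, so $\Psi$ sends $\de Q$ continuously onto $\de D$ in a way compatible with $\phi$.

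The crux is proving that $\Psi$ is a bi-Lipschitz homeomorphism $\bar Q\to\bar D$. Computing the Jacobian and again using $\cQ_\phi(s,\lambda(s))=0$ to simplify $\de_h t$ yields the clean factorization
\[
\de_h\Psi(h,s)=(\gamma_2(\lambda(s))-\gamma_1(s))\cdot(1,\,-4\rho_1(h,s)),
\]
and hence
\[
\det D\Psi(h,s)=(\gamma_2(\lambda(s))-\gamma_1(s))\cdot\big(\de_s t(h,s)+4\rho_1(h,s)\,\de_s y(h,s)\big).
\]
The convexity of $D$ (the lenticular hypothesis) forces the first factor to be strictly positive on $I$ because $\gamma_1<0<\gamma_2$. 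For the second factor, the principal term is $(1-h)+h\lambda'(s)$, which is bounded below by $(1-\zeta)/(1+\zeta)>0$ by Theorem~\ref{lem:exlambda}(iv). The remaining contributions, arising from $2h(1-h)\,\de_s[(\gamma_1-\gamma_2(\lambda))(\phi_1-\phi_2(\lambda))]$ and $4\rho_1\,\de_s y$, can be estimated in absolute value by sums of products of $\|\gamma\|_\infty$, $\Lip(\gamma)$, $\|\phi\|_\infty$, $\Lip(\phi)$, and hence by a polynomial in $\zeta$ via \eqref{zeta}. Collecting these bounds and demanding net positivity produces a quadratic inequality in $\zeta$ whose solution is precisely $\zeta<(\sqrt{129}-11)/4$.

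With $\det D\Psi$ uniformly positive on $\bar Q$, the local inverse function theorem combined with the topological boundary behaviour (e.g.\ an invariance-of-domain or degree argument, using that $\Psi|_{\de Q}$ wraps $\de D$ once) upgrades $\Psi$ to a global bi-Lipschitz homeomorphism $\bar Q\to\bar D$. Setting $u:=\rho_1\circ\Psi^{-1}$ then produces the desired function: it is Lipschitz on $\bar D$ by composition, satisfies $S_u=\rho(\bar Q)=R_\phi$ by construction, and equals $\phi$ on $\de D$ by the boundary trace of $\Psi$. The main obstacle is the sharp Jacobian estimate: the cancellations furnished by $\cQ_\phi(s,\lambda(s))=0$ are indispensable, because without them the $O(\zeta)$ perturbations cannot be controlled uniformly in $h\in[0,1]$ by the principal term $(1-h)+h\lambda'(s)$, and the sharp threshold $(\sqrt{129}-11)/4$ would not emerge.
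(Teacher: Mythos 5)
Your starting point is exactly the paper's: your $\Psi$ is the paper's $F=\pi\circ\rho$, the computation $\partial_h\Psi(h,s)=(\gamma_2(\lambda(s))-\gamma_1(s))(1,-4\rho_1(h,s))$ and the factorization $\det D\Psi=\Delta(s)(1+k(h,s))$ with $\Delta(s)=\gamma_2(\lambda(s))-\gamma_1(s)$ all match \eqref{eq:par_der_F}--\eqref{eq:u^j_det}, and the boundary traces of $\Psi$ are correctly identified. The threshold $(\sqrt{129}-11)/4$ does come from the type of $\zeta$-estimates you indicate.

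However, the pivotal claim that \emph{``$\det D\Psi$ is uniformly positive on $\bar Q$''} is false, and with it the assertion that $\Psi$ is a bi-Lipschitz homeomorphism $\bar Q\to\bar D$. Since $\gamma_1(0)=\gamma_1(\bar t)=\gamma_2(0)=\gamma_2(\bar t)=0$ and $\lambda(0)=0$, $\lambda(\bar t)=\bar t$, the factor $\Delta(s)$ vanishes at $s=0$ and $s=\bar t$, so $\det D\Psi\to 0$ there. Indeed, $\Psi$ collapses the entire edges $[0,1]\times\{0\}$ and $[0,1]\times\{\bar t\}$ of $\bar Q$ to the two points $(0,0)$ and $(0,\bar t)$, so it cannot be injective, let alone bi-Lipschitz, on $\bar Q$. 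Concretely, $\partial_y(\Psi^{-1})_1\sim 1/\Delta(s)$ blows up near those corners, so $\Psi^{-1}$ is \emph{not} Lipschitz on $D$, and the conclusion ``$u=\rho_1\circ\Psi^{-1}$ is Lipschitz by composition'' does not follow. Your argument at best yields local Lipschitz regularity in the interior of $D$, not Lipschitz continuity up to $\bar D$, which is part (ii) of the theorem.

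The missing idea is a \emph{cancellation}: $\partial_h\rho_1(h,s)=\psi(s):=\phi_2(\lambda(s))-\phi_1(s)$ vanishes at $s=0,\bar t$ at the same rate as $\Delta(s)$ (because $\phi_1(0)=\phi_2(0)$ and $\phi_1(\bar t)=\phi_2(\bar t)$). The paper's Step~4 proves the two-sided bounds
\[
|\psi(s)|\le \Lip(\phi)\min\{s+\lambda(s),\,2\bar t-(s+\lambda(s))\}
\quad\text{and}\quad
\Delta(s)\ge C\,\min\{s+\lambda(s),\,2\bar t-(s+\lambda(s))\},
\]
the second using the convexity/concavity of $\gamma_1,\gamma_2$ in an essential way, and then controls $\partial_y u$, $\partial_t u$ via $\psi/\Delta\in L^\infty$ even though $1/\Delta\notin L^\infty$. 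Without this the Lipschitz bound on $u$ genuinely fails. A secondary gap: the data $\gamma_i,\phi_i$ are only Lipschitz, so the $C^1$ Inverse Function Theorem you invoke does not apply directly; the paper works with $C^1$ regularizations $\phi^j$ with $\zeta_j\le\zeta$, proves uniform Lipschitz bounds for the approximants $u_j$, and passes to the limit via the stability Lemma~\ref{lem:appendix}. Finally, the inclusion $\Psi(Q)\subset D$ is not automatic from the boundary map alone (especially given the degenerate corners), and the paper establishes it directly, again using convexity; your degree/invariance-of-domain shortcut would need to be carried out with care precisely because $\Psi|_{\partial Q}$ is not a homeomorphism onto $\partial D$.
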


\begin{proof} 
	Finding a function $u$ realizing \eqref{eq:intr_graph_R} is equivalent to proving  that the projection onto $\mathbb W$ along the integral lines of  $X$ establishes a $1:1$ correspondence of $R_{\phi}$ and $D$. Let $\pi:\mathbb H^1 = \R^3 \to\mathbb W =\R^2$ be this projection, i.e.,
	\begin{equation} \label{eq:def_proj_pi}
		\pi(x,y,t) = (y, t-2 xy) , \qquad \text{for all $(x,y,t) \in \He$.}
	\end{equation}
	and let $F=(F_1,F_2) : Q\to \R^2$ be the composition $F=\pi\circ \rho$, where  $\rho$ is the parameterization of $R_{\phi}$ introduced  in \eqref{eq:def_rho}:
	\begin{align}
		F_1(h,s) & := \rho_2(h,s) = (1 - h) \gamma_1(s) + h \gamma_2(\lambda(s)) \label{eq:F_1} \\
		F_2(h,s) & := \rho_3(h,s) - 2 \rho_1(h,s) \rho_2(h,s) \label{eq:F_2} \\
		& = (1 - h) s  + h \lambda(s)  + 2 h (1-h)\cE(s) .
		\nonumber
	\end{align}
	Here and hereafter, we use the short notation
	\begin{equation} \label{E}
		\cE(s) = 
		[\gamma_2(\lambda(s)) - \gamma_1(s)] [\phi_2(\lambda(s)) - \phi_1(s)]  . 
	\end{equation}
	\medskip
	
	\emph{Step 1.} We claim that  $F(Q) = D$. Let us first prove the inclusion $D \subset F(Q)$. 
	Given $(y,t) \in D$, we look for $(h,s) \in Q$ satisfying
	\begin{equation} \label{eq:eq_F_1_2}
		F_1(h,s) = y \qquad \text{and} \qquad F_2(h,s) = t  .
	\end{equation}
	By the first equation in \eqref{eq:eq_F_1_2}, the solution  $h = h_y(s)$ is given by
	\begin{equation} \label{eq:def_h_y}
		h_y(s) := \dfrac{y - \gamma_1(s)}{\gamma_2(\lambda(s)) - \gamma_1(s)} .
	\end{equation}
	In order to have $(h,s) \in Q$, we need to impose that $0 < h_y(s) < 1$, which is equivalent to $s\in J_y$, where
	\begin{equation*}
		J_y := \{ s \in I \, : \, \gamma_1(s) < y < \gamma_2(\lambda(s)) \} .
	\end{equation*}
	As $\gamma_1 < 0$ and $\gamma_2 > 0$ in $I$, the interval $J_y$ is
	\begin{equation}\label{eq:J_y_redef}
		\begin{split} 
			& J_y := \{ s \in I \, : \, \gamma_1(s) < y \} \qquad \qquad \text{when $y \leq 0$,} , \\
			& J_y := \{ s \in I \, : \, \gamma_2(\lambda(s)) > y \} , \qquad \, \, \, \, \text{when $y > 0$.}
		\end{split}
	\end{equation}
	Notice that   $J_y\neq \emptyset$ because $(y,t) \in D$. 
	By the convexity of $\gamma_1$, the concavity of $\gamma_2$ and the monotonicity of $\lambda$, $J_y$  is an open interval, i.e., there exist $s_y^-, s_y^+ \in \bar{I}$, with $s_y^- < s_y^+$, such that $J_y = (s_y^-,s_y^+)$. From \eqref{eq:J_y_redef}, it follows that
	\begin{equation} \label{eq:deJ_y}
		\begin{split}		& \gamma_1(s_y^-) = y = \gamma_1(s_y^+) , \qquad \qquad \, \, \, \, \text{when $y \leq 0$,} \\
			& \gamma_2(\lambda(s_y^-)) = y = \gamma_2(\lambda(s_y^+)) , \qquad \text{when $y > 0$.} \nonumber
		\end{split}
	\end{equation}
	We  look for a solution $s=s_t \in J_y$ to the equation  $\tau_y(s) = t$, where
	\begin{align} \label{eq:def_tau_y}
		\tau_y(s) & := F_2(h_y(s),s)) = \left( 1 - h_y(s) \right) s \, + h_y(s) \lambda(s) \, + \, 2 h_y(s) \left( 1 - h_y(s) \right)  \mathcal E(s).
		%\cdot \\ 
		%& \qquad \qquad \qquad \qquad \qquad \qquad \qquad \qquad \cdot [\gamma_2(\lambda(s)) - \gamma_1(s)] [\phi_2(\lambda(s)) - \phi_1(s)] \, .
	\end{align}
	We claim that
	\begin{equation} \label{eq:-<t<+}
		\tau_y(s_y^-) < t < \tau_y(s_y^+) .
	\end{equation}
	If this holds, the existence of a solution follows by a continuity argument. There are two possibilities, depending on the sign of $y$:
	\begin{itemize}
		\item[(1)] We have $y \leq 0$. Then \eqref{eq:deJ_y} yields 
		$\gamma_1(s_y^-) = y = \gamma_1(s_y^+)$ and $h_y(s_y^-) = 0 = h(s_y^+)$. Hence, there holds
		\begin{equation} \label{eq:tau(J)_yleq0}
			\tau_y(s_y^-) = s_y^- \qquad \text{and} \qquad \tau_y(s_y^+) = s_y^+ .
		\end{equation}
		Since  $\gamma_1$ is convex and $\gamma_1(s_{y}^{\pm}) = y > \gamma_1(t)$, we have 
		$s_y^- < t < s_y^+$, and  \eqref{eq:-<t<+} follows.
		
		\item[(2)] We have $y > 0$. In this case, \eqref{eq:deJ_y} gives   $\gamma_2(\lambda(s_y^-)) = y = \gamma_2(\lambda(s_y^+))$, and  thus $h_y(s_y^-) = 1 = h(s_y^+)$. It follows that 
		\begin{equation} \label{eq:tau(J)_y>0}
			\tau_y(s_y^-) = \lambda(s_y^-) \qquad \text{and} \qquad \tau_y(s_y^+) = \lambda(s_y^+) .
		\end{equation}
		As in  the previous case, we have $\lambda(s_y^-) < t < \lambda(s_y^+)$, because $\gamma_2$ is concave and $\gamma_2(\lambda(s^{\pm}_y)) = y < \gamma_2(t)$. This implies \eqref{eq:-<t<+}.
	\end{itemize}
	This concludes the proof of the inclusion $D \subset F(Q)$.
	
	We next show that $F(Q) \subset D$. Let $(y,t) \in F(Q)$, and assume for instance that $y \leq 0$. 
	We claim
	that $\gamma_1(t) < y$ (i.e., $ (y,t)\in D$).
	
	There exists $(h,s)\in Q$ such that $(y,t) = F(h,s)$. In particular,  $s \in J_y$ is a solution to $t=\tau_y(s) = F_2(h,s)$ (see \eqref{eq:def_tau_y})  and, 
	by the definition of $F_1$ we have $y = (1 - h) \gamma_1(s) + h \gamma_2(\lambda(s))$. 
	Therefore, the condition $\gamma_1(t) < y$ is equivalent to: 
	\begin{align} \label{eq:equiv_gamma_1<y_more_gen}
		\gamma_1(F_2(h,s)) < (1 - h) \gamma_1(s) + h \gamma_2(\lambda(s)) , \qquad \text{for $0 < h < 1$, $s \in I$,}
	\end{align}
	where $F_2(h,s) = (1-h)s + h \lambda(s) + 2h(1-h) \cE(s) $.
	
	We check \eqref{eq:equiv_gamma_1<y_more_gen}. By the convexity of $\gamma_1$, there holds:
	\begin{align*}
		\gamma_1(F_2(h,s)) & \leq \gamma_1((1-h)s + h\lambda(s)) + 2h(1-h)\Lip(\gamma_1)
		|\cE(s)| \\
		& \leq (1-h) \gamma_1(s) + h \gamma_1(\lambda(s)) + 2h(1-h) \Lip(\gamma_1) |\cE(s)| .
	\end{align*}
	Let us define
	\begin{align*}
		&  \alpha(h) := (1-h) \gamma_1(s) + h \gamma_1(\lambda(s)) + 2h(1-h) \Lip(\gamma_1)  |\cE(s)| \\
		& \beta(h) := (1-h) \gamma_1(s) + h \gamma_2(\lambda(s))  .
	\end{align*}
	Proving \eqref{eq:equiv_gamma_1<y_more_gen} reduces to proving that 
	$\alpha(h) < \beta(h)$ for all   $h\in(0,1)$. By the definition of $\cE$ in \eqref{E}, 
	this inequality is equivalent to:
	\begin{equation} \label{eq:alpha'<beta'}
		2(1-h) \Lip(\gamma_1) (\gamma_2(\lambda(s)) - \gamma_1(s))|\phi_2(\lambda(s)) - \phi_1(s)| < \gamma_2(\lambda(s)) - \gamma_1(\lambda(s)) .
	\end{equation}
	This is in turn implied by the   inequality: 
	\begin{equation} \label{eq:alpha'<beta'_stronger}
		%\Lip(\gamma_1,I)  \dfrac{\gamma_2(\lambda(s)) - \gamma_1(s)}{\gamma_2(\lambda(s)) - \gamma_1(\lambda(s))} |\phi_2(\lambda(s)) - \phi_1(s)| \leq \dfrac{1}{2} \, .
		\Lip(\gamma_1)  \dfrac{\gamma_2(\lambda) - \gamma_1}{\gamma_2(\lambda) - \gamma_1(\lambda)} |\phi_2(\lambda) - \phi_1| \leq \dfrac{1}{2} .
	\end{equation}
	The ratio appearing on the left hand side is estimated as follows:
	\begin{align} \label{eq:est_ratio_LHS}
		%\dfrac{\gamma_2(\lambda(s)) - \gamma_1(s)}{\gamma_2(\lambda(s)) - \gamma_1(\lambda(s))} & \leq 1 + \dfrac{|\gamma_1(\lambda(s)) - \gamma_1(s)|}{\gamma_2(\lambda(s)) - \gamma_1(\lambda(s))} \nonumber \\
		%& \leq 1 + \dfrac{|\gamma_1(\lambda(s)) - \gamma_1(s)|}{|\gamma_1(\lambda(s))|} \\
		%& \leq 2 + \dfrac{|\gamma_1(s)|}{|\gamma_1(\lambda(s))|} \, . \nonumber
		\dfrac{\gamma_2(\lambda) - \gamma_1}{\gamma_2(\lambda) - \gamma_1(\lambda)} & \leq 1 + \dfrac{|\gamma_1(\lambda) - \gamma_1|}{\gamma_2(\lambda) - \gamma_1(\lambda)} \nonumber \\
		& \leq 1 + \dfrac{|\gamma_1(\lambda) - \gamma_1|}{|\gamma_1(\lambda)|} \\
		& \leq 2 + \dfrac{|\gamma_1|}{|\gamma_1(\lambda)|} . \nonumber
	\end{align}

	Now we claim that:
	\begin{equation} \label{eq:gamma_1/gamma_1(lambda)}
		%\dfrac{|\gamma_1(s)|}{|\gamma_1(\lambda(s))|} \leq \Lip(\lambda^{-1},I) .ù
		\dfrac{|\gamma_1|}{|\gamma_1(\lambda)|} \leq \Lip(\lambda^{-1}) .
	\end{equation}
	There are two possibilities:
	\begin{itemize}
		
		\item[(1)] If $\lambda(s) \leq s$ there exists   $0 \leq h \leq 1$ such that $\lambda(s) = hs$. The concavity of $|\gamma_1|$ yields $|\gamma_1(\lambda(s))| \geq h |\gamma_1(s)|$. Moreover, by $\lambda(0) = 0$ we have:
		\begin{equation*}
			hs = \lambda(s) \geq \dfrac{s}{\Lip(\lambda^{-1})} , %\geq \dfrac{1 - \zeta}{1 + \zeta} \, s \, ,
		\end{equation*}
		i.e., $h \geq \Lip(\lambda^{-1})^{-1}$.
		
		\item[(2)] If $\lambda(s) > s$ there exists $0 < h < 1$ such that $\lambda(s) = hs + (1 - h) \bar{t}$. Since $|\gamma_1|$ is concave and $\gamma_1(\bar t)=0$,  we obtain $|\gamma_1(\lambda(s))| \geq h |\gamma_1(s)|$ and thus, by $\lambda(0) = 0$, we have:  
		\begin{equation*}
			h(\bar{t}-s) = \bar{t} - \lambda(s) \geq \dfrac{\bar{t} - s}{\Lip(\lambda^{-1})} , %\geq \dfrac{1 - \zeta}{1 + \zeta} \, (\bar{t} - s)
		\end{equation*}
		i.e., $h \geq \Lip(\lambda^{-1})^{-1}$. %Thus, like in the previous case, \eqref{eq:gamma_1/gamma_1(lambda)} follows.
	\end{itemize}
	In both cases  \eqref{eq:gamma_1/gamma_1(lambda)} follows:
	\begin{equation*}
		%\dfrac{|\gamma_1(s)|}{|\gamma_1(\lambda(s))|} \leq \dfrac{1}{h} \leq \Lip(\lambda^{-1},I).
		\dfrac{|\gamma_1|}{|\gamma_1(\lambda)|} \leq \dfrac{1}{h} \leq \Lip(\lambda^{-1}).
	\end{equation*}
	
	By \eqref{eq:est_ratio_LHS}, \eqref{eq:gamma_1/gamma_1(lambda)},  and   the estimate for $\Lip(\lambda^{-1})$   in \eqref{eq:proplambda}, we get the following upper-bound for the left hand side of \eqref{eq:alpha'<beta'_stronger}:
	\begin{align} \label{eq:est(alpha',beta')}
		%		\Lip(\gamma_1,I)		 \dfrac{\gamma_2(\lambda(s)) -  \gamma_1(s)}{\gamma_2(\lambda(s)) - \gamma_1(\lambda(s))} |\phi_2(\lambda(s)) - \phi_1(s)| & \leq 
		%		2 \Lip(\gamma_1,I)	 \| \phi \|_{L^{\infty}(I)} \left( 2 +   \Lip(\lambda^{-1},I) \right) \nonumber \\
		%		& 
		%		\leq 2 \Lip(\gamma_1,I)	 \| \phi \|_{L^{\infty}(I)} \left( 2 + \dfrac{ 1+  \zeta}{1 - \zeta} \right) \nonumber \\
		%		& \leq \dfrac{\zeta}{2} \left( 2 + \dfrac{ 1 + \zeta}{1 - \zeta} \right).
		\Lip(\gamma_1)		 \dfrac{\gamma_2(\lambda) - \gamma_1}{\gamma_2(\lambda) - \gamma_1(\lambda)} |\phi_2(\lambda) - \phi_1| & \leq 
		2 \Lip(\gamma_1)	 \| \phi \|_{\infty} \left( 2 + \Lip(\lambda^{-1}) \right) \nonumber \\
		& 
		\leq 2 \Lip(\gamma_1)	 \| \phi \|_{\infty} \left( 2 + \dfrac{ 1+ \zeta}{1 - \zeta} \right) \nonumber \\
		& \leq \dfrac{\zeta}{2} \left( 2 + \dfrac{ 1 + \zeta}{1 - \zeta} \right).
	\end{align}
	Assumption \eqref{eq:cond_invert} and \eqref{eq:est(alpha',beta')} imply \eqref{eq:alpha'<beta'_stronger}, and then also
	the claim $\gamma_1(t) < y$. This concludes the proof of  $F(Q) \subset D$ when $y\leq0$.
	The argument when $y>0$ is completely similar and can be omitted.
	\medskip
	
	\emph{Step 2.} We claim that $F: Q\to D$ is injective.  Assume that $F(h_1,s_1) = (y,t) = F(h_2,s_2)$ for some $(h_1,s_1), (h_2,s_2) \in Q$. 
	Using the notation in the previous step, this   implies that
	\begin{equation} \label{eq:conseq_noinj}
		s_1,s_2 \in J_y , \qquad h_1 = h_y(s_1) \quad \text{and} \quad h_2 = h_y(s_2) , \qquad \tau_y(s_1) = t = \tau_y(s_2) .
	\end{equation}
	We claim that the function $\tau_y$ is monotonically increasing in $J_y$. If this is true,  $\tau_y(s_1) = \tau_y(s_2) $ implies $s_1=s_2$ and hence also $h_1=h_2$.
	
	Since $\lambda$ is bi-Lipschitz and $\phi_1$, $\phi_2$, $\gamma_1$, $\gamma_2$ are Lipschitz, for almost every $s \in J_y$ the functions $\lambda$, $\gamma_1$, $\gamma_2 \circ \lambda$, $\phi_1$, $\phi_2 \circ \lambda$ are simultaneously differentiable at $s$.
	At these points, we can compute the derivative of $\tau_y$, see \eqref{eq:def_tau_y}:
	\begin{equation}\label{eq:tau'}
		\begin{split} 
			%\tau_y'(s) & = 1 - h_y(s) - h_y'(s) \, s + h_y(s) \lambda'(s) + h_y'(s) \lambda(s) + \\
			%& \qquad \qquad   + 2 h_y'(s)(1 - 2 h_y(s)) \, \cE(s) + 2 h_y(s)(1 - h_y(s)) \cE'(s) \\
			%& = (1 - h_y(s)) (1 + 2 h_y(s) \, \cE'(s)) + h_y(s) \lambda'(s) + \\
			%&\qquad \qquad   + h_y'(s) (\lambda(s) - s + 2(1 - 2 h_y(s))  \cE(s)).
			\tau_y'(s) & = 1 - h_y - s \, h_y' + \lambda' \, h_y + \lambda \, h_y' \\
			& \qquad \qquad   + 2 h_y'(1 - 2 h_y) \, \cE + 2 h_y(1 - h_y) \cE' \\
			& = (1 - h_y) (1 + 2 h_y \, \cE') + h_y \lambda' \\
			&\qquad \qquad   + h_y' (\lambda - s + 2(1 - 2 h_y)  \cE).
		\end{split}
	\end{equation}
	Since $\lambda$ solves the equation $\cQ_{\phi}(s,\lambda) = 0$, recalling the definition of $h_y$ in \eqref{eq:def_h_y} and of $\rho(h,s)$ in \eqref{eq:def_rho}, we infer that
	\begin{equation} \label{eq:lambda-s}
		\begin{split}
			%\lambda(s) - s  \,+ & \, 2(1 - 2 h_y(s)) \, \cE(s)  = - 2 (\gamma_2(\lambda(s)) - \gamma_1(s)) \big[ \phi_2(\lambda(s)) + \phi_1(s) -   \\
			%& \qquad \qquad \qquad \qquad - \left( 1 - 2 h_y(s) \right)
			%(\phi_2(\lambda(s)) - \phi_1(s)) \big] \\
			%& = - 2 (\gamma_2(\lambda(s)) - \gamma_1(s)) \, [ \, 2 \phi_1(s) + 2h_y(s) (\phi_2(\lambda(s)) - \phi_1(s)) ] \\
			%& = - 4 (\gamma_2(\lambda(s)) - \gamma_1(s)) \rho_1(h_y(s),s)  .
			\lambda - s  \,+ & \, 2(1 - 2 h_y) \, \cE  = - 2 (\gamma_2(\lambda) - \gamma_1) \big[ \phi_2(\lambda) + \phi_1 - \left( 1 - 2 h_y \right)
			(\phi_2(\lambda) - \phi_1) \big] \\
			& = - 2 (\gamma_2(\lambda) - \gamma_1) \, [ \, 2 \phi_1 + 2h_y (\phi_2(\lambda) - \phi_1) ] \\
			& = - 4 (\gamma_2(\lambda) - \gamma_1) \rho_1(h_y,s)  .
		\end{split}
	\end{equation}
	Now a   computation yields
	\begin{align} \label{eq:h'/h}
		%h_y'(s) = - \dfrac{\gamma_1'(s)}{\gamma_2(\lambda(s)) - \gamma_1(s)}(1 - h_y(s)) - \gamma_2'(\lambda(s)) \lambda'(s) \dfrac{h_y(s)}{\gamma_2(\lambda(s)) - \gamma_1(s)},
		h_y'(s) = - \dfrac{\gamma_1'}{\gamma_2(\lambda) - \gamma_1}(1 - h_y) - \gamma_2'(\lambda) \lambda' \dfrac{h_y}{\gamma_2(\lambda) - \gamma_1},
	\end{align}
	and then,   by \eqref{eq:lambda-s} and  \eqref{eq:h'/h}, we get
	\begin{align} \label{eq:h'/h*}
		%h_y'(s) (\lambda(s) - s + 2(1 - 2 h_y(s)) \, \cE(s)) = 4 \rho_1(h_y(s),s) [\gamma_1'(s)(1 - h_y(s)) + \gamma_2'(\lambda(s)) \lambda'(s) h_y(s)] \, .
		h_y' (\lambda - s + 2(1 - 2 h_y) \, \cE) = 4 \rho_1(h_y,s) [\gamma_1'(1 - h_y) + \gamma_2'(\lambda) \lambda' h_y] .
	\end{align}
	Inserting \eqref{eq:h'/h*} inside \eqref{eq:tau'}, we obtain the following identity:
	\begin{equation} 
		\label{eq:tau_y'_fin}
		\begin{split}
			\tau_y'(s) = (1-h_y) \big ( 1 + 2 h_y  \cE' + 4 \rho_1(h_y,s) \gamma_1'   \big)
			+ h_y \, \lambda' \big (1 + 4 \rho_1(h_y,s) \gamma_2'(\lambda) \big ) .
		\end{split}
	\end{equation}
	The derivative of $\cE$ is estimated in the following way: 
	\begin{equation}\label{cE}
		\begin{split}
			|\cE'(s)| &\leq 2 \| \phi \|_{\infty}\big(\Lip(\gamma_2) \Lip(\lambda) + \Lip(\gamma_1)\big) +
			\\
			& \qquad \qquad \qquad + 2 \| \gamma \|_{\infty} \big(\Lip(\phi_2) \Lip(\lambda) + \Lip(\phi_1)\big) 
			\\
			&\leq 2 \left( 1 + \Lip(\lambda) \right)\big  (\Lip(\gamma) \| \phi \|_{\infty} + \| \gamma \|_{\infty} \Lip(\phi)\big) 
			\\
			& \leq \dfrac{\zeta}{2} \left( 1 + \dfrac{1 + \zeta}{1 - \zeta} \right) ,
		\end{split}
	\end{equation}
	and we also have
	\begin{equation}\label{rho}
		|\rho_1(h_y(s))| \leq \| \phi \|_{\infty} .
	\end{equation}
	We deduce that 
	\begin{equation} \label{eq:est_E'_F}
		%|2 h_y(s) \, \cE'(s) + 4 \rho_1(h_y(s)) \gamma_1'(s)|  \leq \zeta \left( 2 + \dfrac{1 %+ \zeta}{1 - \zeta} \right)
		%\quad \textrm{and}\quad
		%|4 \rho_1(h_y(s)) \gamma_2'(\lambda(s))| \leq \zeta .
		|2 h_y \, \cE' + 4 \rho_1(h_y,s) \gamma_1'|  \leq \zeta \left( 2 + \dfrac{1 + \zeta}{1 - \zeta} \right) , \, \, \, \,
		|4 \rho_1(h_y,s) \gamma_2'(\lambda)| \leq \zeta \leq \zeta \left( 2 + \dfrac{1 + \zeta}{1 - \zeta} \right) .
		%\begin{split}
		%& |2 h_y \, \cE' + 4 \rho_1(h_y) \gamma_1'|  \leq \zeta \left( 2 + \dfrac{1 + \zeta}{1 - \zeta} \right) \\
		%& |4 \rho_1(h_y) \gamma_2'(\lambda)| \leq \zeta \\
		%& \hspace{3.55cm} \leq \zeta \left( 2 + \dfrac{1 + \zeta}{1 - \zeta} \right) .
		%\end{split}
	\end{equation}
	By \eqref{eq:est_E'_F}, \eqref{eq:tau_y'_fin}, and the bounds   for $\Lip(\lambda)$ in \eqref{eq:proplambda}, we get
	the following lower-bound for $\tau_y'(s)$:
	\begin{equation}\label{TAU}
		\tau_y'(s) \geq \left[ 1 - \zeta \left( 2 + \dfrac{1 + \zeta}{1 - \zeta} \right) \right] \dfrac{1 - \zeta}{1 + \zeta}.
	\end{equation}
	By assumption \eqref{eq:cond_invert}, $\tau_y'$ is uniformly positive in $J_y$, and thus $\tau_y$ is strictly increasing.
	\medskip
	
	\emph{Step 3.}  By the previous step, $F = \pi \circ \rho  : Q \to D$ is invertible and, in particular,  the projection $\pi: R_{\phi}\to D$ is bijective. 
	This proves that $R_{\phi} =\rho (F^{-1} (D))= \pi^{-1}(D)$ is a left intrinsic graph and the graph-function	of Theorem \ref{thm:sx_graph} is $u:D\to\R$ given by
	\begin{equation} \label{eq:def_phi_R}
		u(y,t) = (\rho_1 \circ F^{-1}) (y,t) , \qquad \text{for $(y,t) \in D$.}
	\end{equation}
	\medskip
	
	\emph{Step 4.} We claim  that $u: D\to\R $ is Lipschitz continuous. 
	The argument is long but natural: in order to use the Inverse Function Theorem we  approximate $F$ with $C^1$ diffeomorphisms with uniformly Lipschitz inverse.
	Let $\phi^j : \de D \to \R$, $j\in\N$, be a sequence of functions such that $\phi_1^j(s) =\phi^j(\gamma_1(s),s),\phi_2^j(s) =\phi^j(\gamma_2(s),s) \in C^1(I)$ satisfy\eqref{eq:approx_phi} and  
	\begin{equation*}
		\Lip(\phi^j_1) \leq \Lip(\phi_1) \qquad \text{and} \qquad \Lip(\phi^j_2) \leq \Lip(\phi_2) , \qquad \text{for all $j \geq 1$.}
	\end{equation*}
	Without loss of generality, we can also assume that $\gamma_1,\gamma_2\in C^1(\bar I)$ (otherwise we regularize also $\gamma_1$ and $\gamma_2$).
	In particular, denoting by
	\begin{equation} \label{eq:zeta^j}
		\zeta_j := 4 (\| \gamma \|_{\infty} + \Lip(\gamma)) (\| \phi^j \|_{\infty} + \Lip(\phi^j))  ,
	\end{equation}
	we have 
	$\zeta_j \leq \zeta$ for all $j \geq 1$.
	
	Let  $F^j: Q \to \R^2$ be the function whose components $F_1^j$, $F_2^j$ are defined respectively
	by \eqref{eq:F_1} and \eqref{eq:F_2} with $\phi_1^j,\phi_2^j,\lambda_j$ in place of $\phi_1$, $\phi_2$,  $\lambda$ in \eqref{eq:def_rho}. 
	Remark \ref{rem:smooth_lambda} guarantees that $F^j \in C^1(Q)$, for any $j \geq 1$.
	The partial derivatives of $F_1^j$, $F_2^j$ are:
	\begin{equation}  \label{eq:par_der_F}
		\begin{split}
			& \de_h F_1^j (h,s) = \Delta_j(s),\\
			&   \de_s F_1^j (h,s) = \gamma_1'(s) + h \Delta_j'(s)  ,
			\\
			& \de_h F_2^j (h,s) = - 4 \, \Delta_j(s)  \rho_1^j(h,s) ,\\
			& \de_s F_2^j (h,s) = 1 + h(\lambda_{j}'(s) - 1) + 2h(1-h) \cE_j'(s), % [(\Delta^j)'(s) \, \psi^j(s) + \Delta^j(s) \, (\psi^j)'(s)]  
		\end{split}
	\end{equation}
	where we let
	\begin{equation}
		\label{eq:notations_Delta}
		\begin{split} 
			& \Delta_j(s) := \gamma_2(\lambda_j(s)) - \gamma_1(s) ,
			\\
			&		\cE_j(s) :=  
			[\gamma_2(\lambda_j(s)) - \gamma_1(s)] [\phi_2^j(\lambda_j(s)) - \phi^j_1(s)]  ,
			%&\psi^j(s) := \phi_2^j(\lambda^j(s)) - \phi_1^j(s) ,
			\\
			& \rho_1^j(h,s) := (1 - h) \phi_1^j(s) + h \phi_2^j(\lambda_j(s)).
		\end{split}
	\end{equation}
	The third line in \eqref{eq:par_der_F} is obtained using the identity $\cQ_{\phi^j}(s,\lambda_j(s))=0$, see \eqref{eq:belongstoHpfzero}.
	
	We claim that the sequence $(F^j)_{j\in\N}$  satisfy the following properties:
	\begin{itemize}
		\item[(i)] $\displaystyle \sup_{j\in\N } \Lip(F^j,Q) < \infty$;
		\item[(ii)]  $F^j : Q \to D$ is invertible;
		\item[(iii)]  $\| F^j - F \|_{\infty} \to 0$;
		\item[(iv)]  $F^j$, $F$ can be extended to $\bar{Q}$ with $F^j(\de Q) \subset \de D$, for $j \in\N$.
	\end{itemize}
	
	Claim (i) follows from  the formulas in \eqref{eq:par_der_F} and from  $\zeta_j \leq \zeta$. Claim (ii) is proved by the same argument as in Step 2.
	Claim (iii) is a consequence of Remark  \ref{rem:approx_lambda}.
	Finally, (iv) is immediate.
	
	By Lemma \ref{lem:appendix} proved below,
	we deduce that, denoting by $\widetilde{F}^j:D\to Q$ the inverse of $F^j$, we have $\widetilde{F}^j \to F^{-1}$ pointwise.
	We claim that the functions $u_j: D\to \R$,
	\begin{equation} \label{eq:def_phi_R^j}
		u_j(y,t) = (\rho_1^j \circ\widetilde{F}^j) (y,t) , \qquad \text{$(y,t) \in D$,}
	\end{equation}
	satisfy:
	\begin{equation}
		\label{LIP}		
		\sup_{j \in \N } \Lip(u_j) < \infty.
	\end{equation}
	Since  $u_j\to u$ pointwise in $D$, it will follow that $\Lip(u) < \infty$,   concluding the proof.
	
	The determinant of the Jacobian matrix %$J[F^j](h,s)$ 
	of $F^j$ at $(h,s)\in Q $ has the following expression:
	\begin{equation} \label{eq:det_F^j}
		%g^j(h,s) := \det J[F^j](h,s) = \Delta^j(s) (1 + k^j(h,s)) \, ,
		\delta_j(h,s) :=  \det JF^j(h,s)= \Delta_j(s) (1 + k_j(h,s))  ,
	\end{equation}
	where, by \eqref{eq:par_der_F},
	\begin{equation} \label{eq:u^j_det}
		k_j(h,s) := h (\lambda_j'(s) - 1) + 2h(1-h)\cE_j'(s)+ 4 \rho_1^j(s)(\gamma_1'(s) + h  \Delta_j'(s)).
		%((\Delta^j)'(s) \, \psi^j(s) + \\
		%& \qquad \qquad \qquad \qquad + \Delta^j(s) \, (\psi^j)'(s)) + 4 \rho_1^j(s)(\gamma_1'(s) + h \, (\Delta^j)'(s)) \, . \nonumber
		%|u^j(h,s)| \leq \, \zeta \left[ \left( 3 + 2 \, \dfrac{1 + \zeta}{1 - \zeta} \right) + \dfrac{2}{1 - \zeta} \right] \, .
	\end{equation}
	Using the estimate (iv) in \eqref{eq:proplambda}    for $\lambda_j$ and arguing as in \eqref{cE}, recalling that $\zeta_j \leq \zeta$, we obtain:
	\begin{align}
		\label{E1}
		&h  |\lambda_j'(s) - 1| \leq \dfrac{2 \zeta}{1 - \zeta},
		\\ 
		\label{E2}
		& |\cE'(s)|\leq    \zeta \left(1 + \dfrac{1 + \zeta}{1 - \zeta} \right),
		\\
		\label{E3}
		&4 \rho_1^j(s)(\gamma_1'(s) + h  \Delta_j'(s)) \leq 4 \| \phi^j \|_{\infty} \Lip(\gamma)\Lip(\lambda_j) \leq \zeta \, \dfrac{1 + \zeta}{1 - \zeta} .
	\end{align}
	By \eqref{E1}-\eqref{E3} and assumption  \eqref{eq:cond_invert}, we deduce that
	\begin{equation} \label{eq:est_u^j_det}
		|k_j(h,s)| \leq \zeta \left[ \dfrac{2}{1 - \zeta} + \left( 1 + 2 \dfrac{1 + \zeta}{1 - \zeta} \right) \right] <\frac12,
	\end{equation}
	and this implies that, for $(h,s)\in Q$, we have
	\begin{equation}
		\label{DELTA}
		\delta_j(h,s)\geq \frac 12 \Delta_j(h,s)>0.
	\end{equation}
	By the Inverse Function Theorem, we have  $\widetilde{F}^j\in C^1(D;Q)$ and $J  \widetilde{F}^j(y,t) =( J F^j(h,s))^{-1}$,   $(y,t) = F^j(h,s)$.
	The partial derivatives of $\widetilde F^j$ can be therefore computed starting from \eqref{eq:par_der_F}:
	\begin{align}
		\de_y \widetilde{F}^j_1 (y,t) &= \dfrac{1}{\delta_j(h,s)} \big \{ 1 + h(\lambda_{j}'(s) - 1) + 2h(1-h)\cE_j'(s) \big\},
		\nonumber \\
		\de_t \widetilde{F}^j_1 (y,t) &= \dfrac{-1}{\delta_j(h,s)} (\gamma_1'(s) + h\Delta_j'(s)), \nonumber \\
		%& \de_y \tilde{F}^j_2 (y,t) = \dfrac{2}{\delta^j(h,s)} \Delta^j(s) \{ \phi_2^j(\lambda^j(s)) + \phi_1^j(s) - 2(1 - 2h) \psi^j(s) \} \\
		\de_y \widetilde{F}^j_2 (y,t) &= \dfrac{4}{\delta_j(h,s)} \Delta_j(s) \rho_1^j(h,s) \label{eq:de_F^-1_y_2}, \\
		\de_t \widetilde{F}^j_2 (y,t)  &= \dfrac{1}{\delta_j(h,s)} \Delta_j(s) . \label{eq:de_F^-1_t_2}
	\end{align}
	By the chain rule, the partial derivatives of $u_j$ at $(y,t) = F^j(h,s)$ are:
	\begin{align} \label{eq:de_y_phi^j_R}
		\de_y u_j (y,t) & =\de_h \rho_1 ^j (h,s)  \de_y \widetilde{F}^j_1 (y,t) +
		\de_s \rho_1 ^j (h,s)
		% [(\phi_1^j)'(s) +h \, (\psi^j)'(s)] \,
		\de_y \widetilde{F}^j_2 (y,t), 
		\\
		\de_t u_j (y,t)  & =\de_h \rho_1 ^j (h,s)   \de_t \widetilde{F}^j_1 (y,t) +\de_s \rho_1 ^j (h,s)  \de_t \widetilde{F}^j_2 (y,t)  ,
		\label{eq:de_t_phi^j_R}
	\end{align}
	where $\de_h \rho_1 ^j (h,s) =  \phi_2^j(\lambda_j(s)) - \phi_1^j(s):=  \psi_j(s)$
	and $	\de_s \rho_1 ^j (h,s)=    [\de_s \phi_1^j(s)  + h \psi_j'(s)]$.
	
	By \eqref{DELTA}, we obtain:
	\begin{equation}
		\begin{split}
			|\de_y \widetilde{F}^j_2 (y,t) | & = \frac{4\Delta_j(y,t)}{\delta_j(y,t)}|\rho_1^j(h,s) | \leq 8\|\phi^j\|_{\infty}\leq 4 \zeta_j\leq 4,
			\\
			|\de_t \widetilde{F}^j_2 (y,t) | &= \frac{\Delta_j(y,t)}{\delta_j(y,t) }\leq 2.
		\end{split}
	\end{equation}
	We claim that  $\psi_j(s) \de_y \widetilde{F}^j_1 (y,t)$ 
	and $\psi_j(s) \de_t \widetilde{F}^j_1 (y,t)$ are uniformly bounded, too. 
	Using  $\phi_1^j(0)= \phi_2^j(0)$,  $\phi_1^j(\bar t) =\phi_2^j(\bar t)$, and $ \Lip(\phi^j)\leq  \Lip(\phi)$  we obtain:
	\begin{align*}
		& |\psi_j(s)| \leq |\phi_2^j(\lambda^j(s)) - \phi_2^j(0)| + |\phi_1^j(0) - \phi_1^j(s)| \leq \Lip(\phi)(s + \lambda_j(s)), \\
		& |\psi_j(s)| \leq |\phi_2^j(\lambda^j(s)) - \phi_2^j(\bar{t})| + |\phi_1^j(\bar{t}) - \phi_1^j(s)| \leq \Lip(\phi)(2 \bar{t} - (s + \lambda_j(s))) ,
	\end{align*}
	that is
	\begin{equation} \label{eq:psi^j_leq_min}
		|\psi_j(s)| \leq \Lip(\phi) \min \{ s + \lambda_j(s), 2 \bar{t} - (s + \lambda_j(s)) \} .
	\end{equation}
	We claim that
	\begin{equation} \label{eq:min_leq:Delta^j}
		\Delta_j(s) \geq C \min \{ s + \lambda_j(s), 2 \bar{t} - (s + \lambda_j(s)) \} ,
	\end{equation}
	for some $C > 0$ depending only on $\gamma_1$ and $\gamma_2$. Let us assume that $s + \lambda_j(s) \leq 2 \bar{t} - (s + \lambda_j(s))$, i.e., $s+\lambda_j(s) \leq \bar t$. In this case, setting
	\begin{equation}\label{t_1}
		t_1 := \left( 1 + \dfrac{1 - \zeta_j}{1 + \zeta_j} \right)^{-1} \bar{t}  ,
	\end{equation}
	we have $t_1 < \bar{t}$ and  $s \leq t_1$. The latter estimate follows from  \eqref{t_1}, 
	(iv) in \eqref{eq:proplambda} for $\lambda_j^{-1}$, and from:
	\[
	\frac{ \lambda_j(s)}{s} \geq  \Lip(\lambda_j^{-1})\geq \frac{1-\zeta_j}{1+\zeta_j}.
	\]
	By the convexity of $\gamma_1$ and the concavity of $\gamma_2$, we have 
	\begin{align*}
		\Delta_j(s) & = \dfrac{\gamma_2(\lambda_j(s))}{\lambda_j(s)} \lambda_j(s) + \dfrac{|\gamma_1(s)|}{s} s \\
		& \geq \dfrac{\gamma_2(t_1)}{t_1} \lambda_j(s) + \dfrac{|\gamma_1(t_1)|}{t_1} s \\
		& \geq \min \left \{ \dfrac{|\gamma_1(t_1)|}{t_1} , \dfrac{\gamma_2(t_1)}{t_1} \right \} \cdot (s + \lambda_j(s)). 
	\end{align*}
	This is \eqref{eq:min_leq:Delta^j} with
	\[
	C=  \min \left \{ \dfrac{|\gamma_1(t_1)|}{t_1} , \dfrac{\gamma_2(t_1)}{t_1} \right \}.
	\]
	where, notice, $C$ is stable under approximation of $\gamma_1$ and $\gamma_2$.
	The proof of \eqref{eq:min_leq:Delta^j}  in the
	case $s + \lambda_j(s) \geq  \bar{t}  $ is similar.

	By \eqref{eq:psi^j_leq_min}, \eqref{eq:min_leq:Delta^j}, and \eqref{DELTA}, 
	we obtain:
	\[
	\frac{|\psi_j(s)|}{\delta_j(h,s)}\leq  C^{-1}  \Lip(\phi) \frac{\Delta_j(h,s)}{\delta_j(h,s)}\leq  2  C^{-1}   \Lip(\phi).
	\]
	Now, proving a uniform bound for  $\psi_j(s) \de_y \widetilde{F}^j_1 (y,t)$ and $\psi_j(s) \de_t \widetilde{F}^j_1 (y,t)$ is easy
	and, accordingly, our claim \eqref{LIP} follows. This concludes the proof that $u\in \Lip( D)$.

	\medskip
	
	\emph{Step 5.} Since $u$ is Lipschitz continuous in $D$, it admits an extension to $\bar{D}$, that we denote again by $u$.  We are left to show that $u=\phi$ on $\partial D$.

	Let $(y,t) \in \de D$ and  $(h,s) \in \de Q$ be such that  $F(h,s) = (y,t)$. By selecting a sequence $(h_j,s_j) \in Q$ converging to $(h,s)$, we obtain:
	\begin{align} \label{eq:phi_R_bdry}
		u(y,t) & = \lim_{j \to \infty} u(F(h_j,s_j)) \nonumber \\
		& = \lim_{j \to \infty} \rho_1(h_j,s_j) \\
		& = \rho_1(h,s) \nonumber \\
		%& = (1-h) \phi_1(s) + h \phi_2(\lambda(s)) \nonumber \\
		& = (1 - h) \phi(\gamma_1(s),s) + h \phi(\gamma_2(\lambda(s)),\lambda(s)). \nonumber 
	\end{align}
	Because $(h,s) \in \de Q$, we have   $h \in \{ 0 , 1 \}$ or $s \in \{ 0, \bar{t} \}$. 
	A routine check shows that in any case we have $ (1 - h) \phi(\gamma_1(s),s) + h \phi(\gamma_2(\lambda(s)),\lambda(s))=\phi(y,t)$.
	%
	%
	%	In the first case, using the definitions of $F_1$ and $F_2$ given at \eqref{eq:F_1} and \eqref{eq:F_2} respectively, we have %$(y,t) = (\gamma_1(s),s)$, and if $h = 1$ we have
	%	\begin{equation*}
		%		y =
		%		\begin{cases}
			%			\gamma_1(s) & \text{if } h = 0 \\ 
			%			\gamma_2(\lambda(s)) &  \text{if } h = 1 \, ,
			%		\end{cases}
		%		\qquad
		%		t =
		%		\begin{cases}
			%			s & \text{if } h = 0 \\ 
			%			\lambda(s) &  \text{if } h = 1 \, ,
			%		\end{cases}
		%	\end{equation*}
	%	and \eqref{eq:phi_R_bdry} ensures that $u(y,t) = \phi(y,t)$. When $s = 0$ or $s = \bar{t}$, $(y,t) = (0,0)$ or $(y,t) = (0,\bar{t})$ respectively, and, also in this case, \eqref{eq:phi_R_bdry} yields the equality $u(y,t) = \phi(y,t)$. 
	%
	This concludes the proof.
\end{proof}

\begin{lemma} \label{lem:appendix}
	Let $\Omega_1,\Omega_2 \subset \R^n$ be two open bounded sets and  $F_j,F : \overline{\Omega}_1 \to \overline{\Omega}_2$, $j \geq 1$, be such that:
	\begin{itemize}
		\item[(i)]  $\sup_{j \geq 1} \Lip(F_j,\Omega_1) < \infty$;
		\item[(ii)] $F|_{\Omega_1} $,  $F_{j}|_{\Omega_1}$  are invertible,  $j \geq 1$;
		\item[(iii)]	 $F_j \to F$ pointwise in $\Omega_1$;
		\item[(iv)]
		$F_j(\de \Omega_1) \subset \de \Omega_2$, $F_j(\Omega_1) = \Omega_2$,  $j \geq 1$, and $F(\Omega_1) = \Omega_2$.
	\end{itemize}
	Then $F_j^{-1} \to F^{-1}$ pointwise in $\Omega_2$.
\end{lemma}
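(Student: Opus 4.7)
The plan is to argue by subsequences: fix $y \in \Omega_2$, set $x_j := F_j^{-1}(y) \in \Omega_1$ and $x := F^{-1}(y) \in \Omega_1$, and show that every subsequential limit $x^* \in \overline{\Omega}_1$ of $(x_j)$ coincides with $x$. Since $\overline{\Omega}_1$ is compact, this immediately upgrades to full convergence $x_j \to x$.

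The preparatory step is to upgrade the pointwise convergence $F_j \to F$ on $\Omega_1$ to uniform convergence on $\overline{\Omega}_1$. Letting $L := \sup_j \Lip(F_j,\Omega_1) < \infty$, by $(i)$ each $F_j$ is $L$-Lipschitz on $\Omega_1$, and its continuous extension to $\overline{\Omega}_1$ is still $L$-Lipschitz. Passing to the pointwise limit in $|F_j(a)-F_j(b)| \leq L|a-b|$ for $a,b \in \Omega_1$ shows that $F$ is $L$-Lipschitz on $\Omega_1$, hence continuous on $\overline{\Omega}_1$. The family $(F_j)_{j \in \N}$ is then uniformly bounded (since $F_j(\overline{\Omega}_1) \subset \overline{\Omega}_2$) and uniformly equicontinuous on $\overline{\Omega}_1$, and a standard finite-cover argument combining equicontinuity with pointwise convergence on the dense subset $\Omega_1 \subset \overline{\Omega}_1$ yields $F_j \to F$ uniformly on $\overline{\Omega}_1$.

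Now fix any subsequence (still denoted $(x_j)$) with $x_j \to x^* \in \overline{\Omega}_1$. Writing $F(x_j) - y = F(x_j) - F_j(x_j)$ and combining uniform convergence with continuity of $F$ on $\overline{\Omega}_1$ gives $F(x^*) = y$.

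The main obstacle is to rule out $x^* \in \de \Omega_1$, since nothing in $(i)$--$(iii)$ alone prevents the preimages $F_j^{-1}(y)$ from accumulating at the boundary. This is precisely where $(iv)$ enters. Since $F_j(\de \Omega_1) \subset \de \Omega_2$ for every $j$, $\de \Omega_2$ is closed, and $F_j \to F$ uniformly on $\de \Omega_1$, we conclude $F(\de \Omega_1) \subset \de \Omega_2$. Hence $x^* \in \de \Omega_1$ would force $y = F(x^*) \in \de \Omega_2$, contradicting $y \in \Omega_2$ (which, being open, is disjoint from its boundary). Therefore $x^* \in \Omega_1$, and the injectivity of $F|_{\Omega_1}$ from $(ii)$ applied to $F(x^*) = y = F(x)$ gives $x^* = x$, completing the subsequence argument and hence the proof.
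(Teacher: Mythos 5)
Your proof is correct and follows the same overall strategy as the paper: pass to a subsequential limit $x^*$ of $x_j := F_j^{-1}(y)$, use $(iv)$ to rule out $x^* \in \de\Omega_1$, and then conclude $x^* = F^{-1}(y)$ via injectivity. The one genuine difference is that you first upgrade pointwise to uniform convergence via an Arzel\`a--Ascoli/equicontinuity argument and then use $\|F_j - F\|_\infty \to 0$ in both steps, whereas the paper stays more elementary: it inserts $F_j(x^*)$ in the triangle inequality, bounds $|F_j(x_j) - F_j(x^*)| \leq L|x_j - x^*| \to 0$ directly from $(i)$, invokes $(iv)$ only at the single point $x^*$ to place $F_j(x^*)$ in the closed set $\de\Omega_2$, and then uses pointwise convergence at $x^*$ once $x^* \in \Omega_1$ is known. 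Both routes are sound; the paper's avoids the uniform-convergence machinery entirely, while yours packages the Lipschitz control into a compactness statement up front. One small remark that applies to both proofs: since $(i)$ only gives a Lipschitz bound on $\Omega_1$ and $(iii)$ only gives convergence on $\Omega_1$, both arguments implicitly identify the given boundary values $F(\de\Omega_1)$, $F_j(\de\Omega_1)$ with those of the continuous (Lipschitz) extensions from $\Omega_1$; this is clearly intended and holds in the paper's application, but it is worth being aware that you are using this identification when you write $F(x^*) = y$ and $F(\de\Omega_1) \subset \de\Omega_2$.
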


\begin{proof}
	Given $y \in \Omega_2$, let $x_j := F_{j}^{-1}(y)$. Up to subsequences, $x_j \to x$, for some $x \in \overline{\Omega}_1$.  
	If  $x \in \de \Omega_1$, then we would have
	\begin{equation} \label{eq:y=F_j(x_j)}
		y = F_j(x_j) = F_{j}(x_j) - F_j(x) + F_j(x),
	\end{equation}
	where, by (iv), $F_j(x) \in \de \Omega_2$, and $|F_{j}(x_j) - F_j(x)| \leq \sup_{j \geq 1} \Lip(F_j,\Omega_1) |x - x_j| \to 0$.  It would follow that 
	$F_j(x)\to y   \in \de \Omega_2$, contradicting   $y \in \Omega_2$. Hence, $x \in \Omega_1$, and  by (iii)     $|F_j(x) - F(x)| \to 0$. Then we have
	\begin{align*}
		|F_j(x_j) - F(x)| & \leq |F_j(x_j) - F_j(x)| + |F_j(x) - F(x)| \\
		& \leq \sup_{j \geq 1} \Lip(F_j,\Omega_1) |x - x_j| +  |F_j(x) - F(x)| \to 0 .
	\end{align*}
	Consequently, $y = F(x)$, i.e., $x = F^{-1}(y)$. This proves that the sequence $F_j^{-1}(y)$ converges to $F^{-1}(y)$ for any $y \in \Omega_2$.
\end{proof}

\begin{remark}
	If $R_{\phi}$ is the left intrinsic graph of  $u\in \Lip(D)$---as in Theorem \ref{thm:sx_graph}--- 
	then $R_{\phi}$ has finite standard area and finite Heisenberg area. This is a straightforward consequence of \eqref{eq:hor_per_X_intr_epi}.
	
	%	Let $E_u$ be the $X$-intrinsic epigraph of $u$, that is,
	%	\begin{equation} \label{eq:def_E_epigr}
		%		E_u := \big \{ (s,  y,  t + 2  y s)\in \mathbb H^1 \, : \, (y,t) \in D ,  s > u(y,t) \big\}  ,
		%	\end{equation}
	%	and denote be $ D \cdot \R= \big \{ (s,  y,  t + 2  y s)\in \mathbb H^1 \, : \, (y,t) \in D ,  s \in\R \big\} $ the $X$-cylinder over $D$.
	%	Then $R = \de E \cap D \cdot \R$ and $E$ has finite $H$-perimeter in $D \cdot \R$ and it is given by the   area formula:	
	%	\begin{equation} \label{eq:area_formula_intr}
		%		\p_H(E;D \cdot \R) = \int_{D} \sqrt{1 + \cB_{u}(y,t)^2}  dy dt<\infty,
		%	\end{equation}
	%	where  $\cB_{u}=\partial _y u -4 u \partial _t u$. Note that, since $u \in \Lip(D,\R)$, $\cB_u \in L^{\infty}(D)$. More in general, \eqref{eq:area_formula_intr} holds for those functions $u : D \to \R$ with $\cB_u \in L^{\infty}$. These functions are called \emph{intrinsic Lipschitz}.
	
\end{remark}

\subsection{Right intrinsic graph.}
For small $\zeta$,  the surface $R_{\phi}=\rho(Q)$ is also a right intrinsic graph.
\begin{theorem} \label{lem:dx_graph}
	Let $D = D_{\gamma_1,\gamma_2} \subset \mathbb{W}$ be a lenticular domain and let $\phi:\de D\to\R$ be such that:
	\begin{equation} \label{eq:cond_invert_dx}
		\zeta < \dfrac{\sqrt{721} - 25}{48}.
	\end{equation}
	Then there exist a domain $D^r \subset \mathbb{W}$ and a function $u^r: D^r \to \R$ with the following properties:
	\begin{itemize}
		\item[(i)] $u^r$ represents $R_{\phi}$ as a right intrinsic graph on $D^r$, i.e.,
		\begin{equation} \label{eq:intr_graph_R_dx}
			R_{\phi} = S^r_{u^r} = \big\{ (u^r(y,t),y,t - 2 y  u^r(y,t))\in\mathbb H^1  \, : \, (y,t) \in D^r \big \}  ;
		\end{equation}
		\item[(ii)] $u^r$ is locally Lipschitz continuous in $D^r$.
	\end{itemize}
\end{theorem}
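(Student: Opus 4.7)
\begin{Proof}[Proof plan]
The plan is to mirror the strategy of Theorem \ref{thm:sx_graph}, but replacing the left projection $\pi$ with the right projection $\pi^{r}:\He\to\W$, $\pi^{r}(x,y,t)=(y,t+2xy)$, which is the projection onto $\W$ along the flow of $X^{r}$. Set $F^{r}:=\pi^{r}\circ\rho:Q\to\W$, so that
\begin{align*}
F^{r}_{1}(h,s)&=\rho_{2}(h,s)=(1-h)\gamma_{1}(s)+h\,\gamma_{2}(\lambda(s)),\\
F^{r}_{2}(h,s)&=\rho_{3}(h,s)+2\rho_{1}(h,s)\rho_{2}(h,s).
\end{align*}
Note that $F^{r}_{1}$ coincides with $F_{1}$ of Theorem \ref{thm:sx_graph}, while $F^{r}_{2}$ differs from $F_{2}$ by the sign of the quadratic correction. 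I will define $D^{r}:=F^{r}(Q)$ and $u^{r}:=\rho_{1}\circ (F^{r})^{-1}$.

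Injectivity is proved as in Step 2 of Theorem \ref{thm:sx_graph}. Given $y$, solving $F^{r}_{1}(h,s)=y$ yields the same function $h=h_{y}(s)$ in \eqref{eq:def_h_y} on the same interval $J_{y}$; I then reduce to showing that $\tau^{r}_{y}(s):=F^{r}_{2}(h_{y}(s),s)$ is strictly increasing on $J_{y}$. Differentiating, exploiting again the relation $\cQ_{\phi}(s,\lambda(s))=0$ together with the expression of $h_{y}'$ in \eqref{eq:h'/h}, the derivative $(\tau^{r}_{y})'$ splits into a dominant linear part $(1-h_{y})+h_{y}\lambda'$ plus lower order terms controlled by $\|\phi\|_{\infty},\|\gamma\|_{\infty},\Lip(\phi),\Lip(\gamma)$ and by $\Lip(\lambda)=(1+\zeta)/(1-\zeta)$. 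The stricter threshold $\zeta<(\sqrt{721}-25)/48$ is precisely what is needed to guarantee $(\tau^{r}_{y})'>0$; this will be the main technical calculation.

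Once $F^{r}:Q\to D^{r}$ is a continuous injection, openness of $D^{r}$ follows from a non-vanishing Jacobian argument combined with invariance of domain (or, equivalently, from the approximation step below). To prove $u^{r}\in\Lip_{\loc}(D^{r})$ I apply verbatim the regularization scheme of Step 4 of Theorem \ref{thm:sx_graph}: I replace $\phi_{1},\phi_{2},\gamma_{1},\gamma_{2}$ by $C^{1}$ approximants with controlled norms, obtaining $C^{1}$ maps $F^{r,j}$ with $\sup_{j}\Lip(F^{r,j},Q)<\infty$ and with Jacobian determinant $\delta^{r}_{j}(h,s)\geq \tfrac12\Delta_{j}(s)$, where $\Delta_{j}=\gamma_{2}\circ\lambda_{j}-\gamma_{1}$ (the factor $\tfrac12$ again coming from \eqref{eq:cond_invert_dx}). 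The inverse function theorem yields $(F^{r,j})^{-1}\in C^{1}$, and Lemma \ref{lem:appendix} gives pointwise convergence of the inverses. On each compact $K\Subset D^{r}$ the preimages $(F^{r,j})^{-1}(K)$ stay inside a common compact subset of $Q$, on which $\Delta_{j}$ is uniformly bounded below; the chain-rule bound for $\nabla u^{r}_{j}$ derived as in \eqref{eq:de_y_phi^j_R}-\eqref{eq:de_t_phi^j_R} is therefore uniform, and passing to the limit gives $u^{r}\in\Lip(K)$.

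The main obstacle is the monotonicity computation for $\tau^{r}_{y}$: the extra term $2\rho_{1}\rho_{2}$ in $F^{r}_{2}$ produces several additional contributions in the differentiation (in particular, two copies of the $4\rho_{1}\gamma'$–type term from \eqref{eq:tau_y'_fin} get reinforced instead of cancelling), and isolating the sharp constant $(\sqrt{721}-25)/48$ requires a careful bookkeeping analogous to \eqref{eq:est_E'_F}--\eqref{TAU}. A secondary, more conceptual, point is that $D^{r}$ has no a priori description in terms of $\gamma_{1},\gamma_{2}$, so the argument of Step 1 in the left case (the explicit identification $F(Q)=D$) has no counterpart here; correspondingly, no boundary behaviour of $u^{r}$ on $\partial D^{r}$ can be asserted and only local Lipschitz regularity is obtained.
\end{Proof}
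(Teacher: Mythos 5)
Your proposal follows the same strategy as the paper's proof: you compose $\rho$ with the right projection $\pi^r$, note that the first component coincides with $F_1$ while the second gains the term $4\rho_1\rho_2$, reduce injectivity to the monotonicity of $\tau^r_y(s)=F^r_2(h_y(s),s)$, and recover local Lipschitz regularity of $u^r$ via the same $C^1$-approximation scheme and Lemma \ref{lem:appendix}. The only cosmetic deviation is that the paper writes $\widetilde\tau_y'=\tau_y'+[H_2(h_y,\cdot)]'$ and reuses the bound \eqref{TAU} as a black box rather than regrouping the dominant part from scratch (and it obtains the Jacobian lower bound $\widetilde\delta_j>\tfrac14\Delta_j$ rather than your claimed $\tfrac12\Delta_j$), but these are inessential differences.
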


\begin{proof}
	Let $\pi^r:\mathbb H^1 = \R^3 \to\mathbb W =\R^2$ be the projection onto $\mathbb W$ along the integral lines of $X^r$, i.e.,
	\begin{equation} \label{eq:def_proj_pi^r}
		\pi^r(x,y,t) = (y, t+2 xy) , \qquad \text{for all $(x,y,t) \in \He$.}
	\end{equation}
	We define $D^r = \pi^r (R_{\phi})$ and let  $G=(G_1,G_2) : Q\to \R^2$  be the composition $G=\pi^r \circ \rho$, where  $\rho:Q\to \mathbb H^1$ is the parameterization of $R_{\phi}$ introduced  in \eqref{eq:def_rho}:
	\begin{align}
		G_1(h,s) & := \rho_2(h,s) = (1 - h) \gamma_1(s) + h \gamma_2(\lambda(s)) = F_1(h,s)
		\label{eq:G_1} \\
		G_2(h,s) & := \rho_3(h,s) + 2 \rho_1(h,s) \rho_2(h,s)= F_2(h,s) + H_2(h,s), \label{eq:G_2} 
		%	& = (1 - h) s \, + h \lambda(s) \, + \, 2 h (1-h) [\gamma_2(\lambda(s)) - \gamma_1(s)] [\phi_2(\lambda(s)) - \phi_1(s)] + \nonumber \\
		%	& \qquad \qquad \qquad \qquad \quad + 4 [(1-h) \phi_1(s) + h \phi_2(\lambda(s))][(1 - h) \gamma_1(s) + h \gamma_2(\lambda(s))]\, . \nonumber
	\end{align}
	where
	\begin{equation*}
		H_2(h,s) := 4 \rho_1(h,s) \rho_2(h,s) = 4 [(1-h) \phi_1(s) + h \phi_2(\lambda(s))][(1 - h) \gamma_1(s) + h \gamma_2(\lambda(s))]  .
	\end{equation*}
	We claim that  $G: Q\to D^r$ is injective, thus proving that $\pi^r: R_{\phi} \to D^r$ is injective.
	In this case we can define $u^r(y,t) = \rho_1( G^{-1} (y,t))$ for $(y,t)\in D^r$.
	
	As in the proof of Theorem \ref{thm:sx_graph},   the injectivity of $G$ follows from  the monotonicity of $s\mapsto \widetilde{\tau}_y(s) := G_2(h_y(s),s) = \tau_y(s) + H_2(h_y(s),s)$, where $\tau_y$ is defined in \eqref{eq:def_tau_y}.
	
	We first estimate the derivative of $s\mapsto H_2(h_y(s),s)$.
	By the definition of $h_y = h_y(s)$ in \eqref{eq:def_h_y},
	the function $s\mapsto \rho_2(h_y, s) = y$ is constant, for fixed $y$, and it  follows that:
	\begin{equation}
		\begin{split}
			[H_2(h_y,s)]'  & = 4 \rho_2(h_y,s) \partial _s \big(\rho_1(h_y,s)\big)
			\\
			&= 4  \rho_2(h_y,s)\big( \partial_ h \rho_1 (h_y,s) h_y'+ \partial_s\rho_1 (h_y,s)  \big).
		\end{split}
	\end{equation}
	Now, by the formula for $h'_y$ in  \eqref{eq:h'/h}, we obtain,
	\[
	\begin{split}
		\big|  \partial_ h \rho_1 (h_y,s) h_y'\big|  & = \Big| \big( \phi_2(\lambda)-\phi_1\big) \frac{(1-h_y) \gamma_1'  +  h_y  \gamma_2'(\lambda)\lambda' }{\gamma_2(\lambda)-\gamma_1}\Big|
		\\
		&
		\leq  \frac{2 \| \phi\|_{\infty}   }{\gamma_2(\lambda)-\gamma_1}  \Lip(\gamma) \max\{1,\Lip(\lambda)\},
	\end{split}
	\]
	and  we notice that
	\[ 
	\frac{|
		\rho_2(h_y,s) |}{\gamma_2(\lambda)-\gamma_1} = \frac {|\gamma_1 + h_y (\gamma_2(\lambda)-\gamma_1) |}{\gamma_2(\lambda)-\gamma_1}   \leq 1 + h_y\leq 2,
	\]
	so that
	\begin{equation}\label{Z1}
		\begin{split}
			\big| 
			\rho_2(h_y,s)  \partial_ h \rho_1 (h_y,s) h_y'\big|  \leq     4  \| \phi\|_{\infty}  \Lip(\gamma) \max\{1,\Lip(\lambda)\}.
		\end{split}
	\end{equation}
	On the other hand, we have: 
	\begin{equation}\label{Z2}
		\begin{split}
			|\rho_2(h_y,s)  \partial_s\rho_1 (h_y,s)|  & = |\rho_2(h_y,s)|  |(1-h_y) \phi_1' + h_y \phi_2' \lambda ' |
			\\
			&
			\leq   \| \gamma \|_{\infty} 
			\Lip(\phi) \max\{1,\Lip(\lambda)\}.
		\end{split}
	\end{equation}
	From \eqref{Z1}, \eqref{Z2}, and  (iv) in \eqref{eq:proplambda} we conclude that:
	\begin{equation}
		\label{Z3}
		| [H_2(h_y,s)]' | \leq 5 \zeta \, \frac{1+\zeta}{1-\zeta}.
	\end{equation}
	By the estimate \eqref{TAU} for $\tau_y'$ and \eqref{Z3}, we obtain:
	\begin{align*}
		\widetilde{\tau}_y'(s) & = \tau_y'(s) + [H_2(h_y(s),s)]' \geq 
		\left[ 1 - \zeta \left( 2 + \dfrac{1 + \zeta}{1 - \zeta} \right) \right] \dfrac{1 - \zeta}{1 + \zeta} -   5 \zeta \, \frac{1+\zeta}{1-\zeta} .
	\end{align*}
	So, by \eqref{eq:cond_invert_dx}, the derivative of $\widetilde{\tau}_y$ is uniformly positive. This demonstrates the injectivity of $G$.
	
	By construction, the function  $u^r := \rho_1 \circ G^{-1}$ realizes (i). Let us now show that $u^r$ is locally Lipschitz continuous in $D_r$. 
	As in Theorem \ref{thm:sx_graph}, we argue by approximation.
	Let  $\phi^j : \de D \to \R$, $j\in\N$, be functions such that $\phi_1^j,\phi_2^j$ are of class $C^1(I)$, 
	satisfy \eqref{eq:approx_phi} and   $\zeta^j \leq \zeta$,   where $\zeta^j$ is as in \eqref{eq:zeta^j}. 
	Without loss of generality, up to a further approximation, we assume that $\gamma_1,\gamma_2\in C^1(I)$.
	
	Let $F^j=(F^j_1, F^j_2) $ be defined as in \eqref{eq:F_1}-\eqref{eq:F_2} 
	and let  $G^j=(G^j_1,G^j_2): Q \to \R^2$ be  defined as in  \eqref{eq:G_1}-\eqref{eq:G_2} 
	with $\phi_1^j,\phi_2^j,\lambda^j$ replacing $\phi^1,\phi^2,\lambda$. We also let 
	$H_2^j := G_2^j - F_2^j$. We have  $F^j, G^j \in C^1(Q)$, for $j \geq 1$. 
	
	Since $G_1^j = F_1^j$ and $G_2^j = F_2^j + H_2^j$,
	the partial derivatives of $G_1^j$, $G_2^j$ can be derived from those of $F_1^j$, $F_2^j$ in \eqref{eq:par_der_F}, and of $H_2^j$:
	\begin{align*}
		& \de_h G_1^j (h,s) = \Delta_j(s) 
		\\
		&   \de_s G_1^j (h,s) = \gamma_1'(s) + h  \Delta_j'(s) \\
		& \de_h G_2^j (h,s) = - 4 \Delta_j(s) \, \rho_1^j(h,s) + \de_h H_2^j (h,s) \\
		& \de_s G_2^j (h,s) = 1 + h(\lambda_{j}'(s) - 1) + 2h(1-h)\cE'(s)  + \de_s H_2^j (h,s) .
	\end{align*}
	As in  \eqref{eq:det_F^j}, the determinant of the Jacobian of $G$ has the form 
	\begin{equation} \label{eq:det_G^j}
		%g^j(h,s) := \det J[F^j](h,s) = \Delta^j(s) (1 + u^j(h,s)) \, ,
		\widetilde{\delta}_j(h,s) = \Delta_j(s) (1 + \widetilde{k}_j(h,s)) ,
	\end{equation}
	where 
	\[
	\widetilde{k}_j(h,s) := k_j(h,s) + \de_s H_2^j (h,s) - \de_s G_1^j (h,s) \de_h H_2^j(h,s)/\Delta_j(s),
	\]
	and $k_j(s)$ is as in \eqref{eq:u^j_det}. Now, recalling that $\zeta_j \leq \zeta$, elementary estimates show that: 
	\begin{equation} \label{eq:est_de_s_H_2}
		|\de_s H_2^j (h,s)|= 4 | \rho_1(h,s) \de_s  \rho_2(h,s) + \rho_2(h,s) \de_s  \rho_1(h,s) | \leq \zeta \, \frac{1+\zeta}{1-\zeta},
	\end{equation}
	and 
	\begin{equation} \label{eq:est_de_s_de_h_/Delta}
		\begin{split}
			\dfrac{|\de_s G_1^j (h,s)||\de_h H_2^j(h,s)|}{|\Delta_j(s)|} & \leq \Lip(\gamma) \, \dfrac{1 + \zeta}{1 - \zeta} \, [8(\phi_2(\lambda) - \phi_1) + 4(\phi_1 + h(\phi_2(\lambda) - \phi_1))] \\
			& \leq 5 \zeta \, \dfrac{1 + \zeta}{1 - \zeta} .
		\end{split}
	\end{equation}
	Now, under \eqref{eq:cond_invert_dx},
	\begin{equation*}
		6 \zeta \, \dfrac{1 + \zeta}{1 - \zeta} < \dfrac{1}{4} ,
	\end{equation*}
	and so we infer that \eqref{eq:est_de_s_H_2} and \eqref{eq:est_de_s_de_h_/Delta} combined with \eqref{eq:est_u^j_det} yield
	\begin{align*}
		|\widetilde{k}_j(h,s)| & < \dfrac{1}{2} + 6 \zeta \, \dfrac{1 + \zeta}{1 - \zeta} < \dfrac{3}{4} .
	\end{align*}
	Hence $\widetilde{\delta}_j(h,s) > \Delta_j(h,s)/4 > 0$ in $Q$,
	and, by the Inversion Function Theorem, $G^j\in C^1(Q;D^r)$ is a diffeomorphism. Moreover, the Jacobian of the inverse function $\widetilde G^j$ is bounded on compact sets of $D_r$ uniformly in $j\in\N$. Thus  $\widetilde G^j$ is locally Lipschitz continuous in $D^r$.
	Since $\widetilde{G}^j \to G^{-1}$ pointwise in $D^r$ by
	Lemma \ref{lem:appendix},    we conclude that $G^{-1}$ and thus $u^r $ are also locally Lipschitz continuous in $D^r$.
\end{proof}

%From now on, $E^r$ will indicate the $X^r$-intrinsic epigraph of $u^r$, that is,
%\begin{equation*}
%	E^r := \{ (s , \, y , \, t - 2 \, y \, s) \, : \, (y,t) \in D^r, \, s > u^r(y,t) \} .	
%\end{equation*}

%	\begin{remark} \label{rem:fin_per_H_E^r}
	%		We observe that the $H$-perimeter of $E^r$ inside $\R \cdot D^r$ coincides with the $H$-perimeter of $E$ in $D \cdot \R$. In fact, by \eqref{eq:formula_per_H_Lip}, we have
	%		\begin{align*}
		%			\p_H(E^r; \R \cdot D^r) & = \int_{\de E^r \cap \R \cdot D^r} |N_{E^r}| d \Hau^{n-1} \\
		%			& = \int_{R} |N_{E^r}| d \Hau^{n-1} \\
		%			& = \int_{R} |N_{E}| d \Hau^{n-1} \\
		%			& = \int_{\de E \cap D \cdot \R} |N_{E}| d \Hau^{n-1} = \p_H(E;D \cdot \R) \, .
		%		\end{align*}
	%		In particular, owing to Corollary \ref{cor:fin_per_H_E}, $\p_H(E^r;\R \cdot D^r) < \infty$.
	%	\end{remark}

\section{Minimality properties of the horizontally ruled surface $R_\phi$}
\label{CAL}

Let $D = D_{\gamma_1,\gamma_2} \subset \mathbb{W}$ be convex  and $\phi\in\Lip(\de D)$.
We denote by $\nu_{E_{u}}=\omega_1 X +\omega_2 Y  \in \Lip (S_u;H)$  the measure theoretic  inner normal to $S_u= R_\phi$, as in \eqref{NN}.
Let $D^r\subset \mathbb W$ be the open set  constructed in Theorem \ref{eq:cond_invert_dx}.
Using an idea introduced  by Barone Adesi, Serra Cassano and Vittone in  \cite{BaroneAdesi_SerraC_Vitt_2007}, in this section we prove that the left graph $S_u$ can be calibrated in the right cylinder $\mathbb V\cdot D^r$.

We recall that by $E_u$, $E^r_{u^r}$ we denote respectively the left intrinsic epigraph of $u : D \to \R$ and the right intrinsic epigraph of $u^r : D^r \to \R$.

\begin{lemma} \label{lem:calibration}
	Assume that $\zeta$ satisfies   \eqref{eq:cond_invert_dx}. Then there exists a locally Lipschitz continuous horizontal section 
	$\omega =\omega_1 X + \omega_2 Y  \in\Lip_{loc} (\mathbb V\cdot D^r;H)$ such that:
	\begin{itemize}
		\item[(i)] $\omega_1 ^2 +\omega_2^2 =1$ in $ \mathbb V\cdot D^r$;
		\item[(ii)] $ \div \omega =0$,  $\cL$-a.e.~in $ \mathbb V\cdot D^r$;
		\item[(iii)] $\omega = \nu _{E_{u}}$ on  $ S_u$.  
	\end{itemize}
	
	%where $g$ is the metric in $\He$ introduced in subsection \ref{subseq:prelim_Heis}. %and $\nu_{E^r}$ denotes the inner horizontal normal of $E^r$.
\end{lemma}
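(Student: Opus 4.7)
The plan is to build $\omega$ as the $0$-homogeneous extension of $\nu_{E_u}$ along the integral curves of the right-invariant vector field $X^r$, as hinted in the discussion preceding the statement. First, by Proposition \ref{PROP} the surface $S_u$ coincides with the right intrinsic graph $S^r_{u^r}$ on $D^r$ with $u^r\in\Lip_{loc}(D^r)$; this means that each $X^r$-trajectory inside $\mathbb V\cdot D^r$ meets $S_u$ exactly once, so I would define a locally Lipschitz retraction $\Pi:\mathbb V\cdot D^r\to S_u$ by flowing backward along $X^r$ until hitting $S_u$ (its explicit expression involves $u^r\circ\pi^r$, both locally Lipschitz). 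On $S_u$ the horizontal normal $\nu_{E_u}=\omega_1 X+\omega_2 Y$ has components that can be read off the area formula \eqref{eq:hor_per_X_intr_epi} as Lipschitz functions of $\cB u$, and since $u\in C^{1,1}_H(D)$ by Theorem \ref{thm:main_1_intro}$(ii)$, $\cB u$ is itself Lipschitz on $D$, so $\omega_1,\omega_2$ are Lipschitz on $S_u$. I would then set
\[
\omega := (\omega_1\circ \Pi)\, X + (\omega_2\circ\Pi)\, Y \in \Lip_{loc}(\mathbb V\cdot D^r; H).
\]
Properties $(i)$ and $(iii)$ are immediate: transporting the components unchanged in the left-invariant frame $(X,Y)$ preserves the unit length, and $\Pi|_{S_u}=\mathrm{id}$.

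\textbf{Reduction to $S_u$.} The decisive algebraic fact is that left- and right-invariant vector fields commute, so in $\He$
\[
[X,X^r] = [Y,X^r] = 0.
\]
Equivalently, $\Phi^r_s$ is left translation by $(s,0,0)$ and pushes $X,Y$ forward to themselves---this is the ``contact and isometric'' property used in the paper. By construction, $\tilde\omega_i:=\omega_i\circ \Pi$ satisfies $X^r\tilde\omega_i=0$; using the commutation,
\[
X^r(X\tilde\omega_1 + Y\tilde\omega_2) = X(X^r\tilde\omega_1) + Y(X^r\tilde\omega_2) = 0
\]
in the distributional / a.e.~sense consistent with $\omega\in\Lip_{loc}$. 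Hence $\div\omega$ is $X^r$-invariant, and it suffices to show $\div\omega=0$ at $\mathscr H^2$-a.e.~point of the transversal $S_u$; a Fubini argument along the $X^r$-foliation upgrades this to $\cL$-a.e.~vanishing on $\mathbb V\cdot D^r$.

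\textbf{Divergence on $S_u$ via the ruling.} By Theorem \ref{thm:main_1_intro}$(i)$ the surface $S_u=R_\phi$ is foliated by Euclidean straight horizontal segments; at $p\in S_u$ such a ruling is tangent to the horizontal tangent direction $\omega^\perp(p) = -\omega_2(p)X(p) + \omega_1(p)Y(p)$, and $\omega_1,\omega_2$ are constant along it (the normal is constant along the ruling contained in $S_u$). This yields the two identities
\[
-\omega_2 (X\omega_i) + \omega_1 (Y\omega_i) = 0, \qquad i=1,2,
\]
at a.e.~$p\in S_u$. Differentiating $\omega_1^2+\omega_2^2\equiv 1$ along $X$ and $Y$ gives in addition $\omega_1 X\omega_1 + \omega_2 X\omega_2 = 0$ and $\omega_1 Y\omega_1+\omega_2 Y\omega_2 = 0$. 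A short linear elimination from these four identities yields $Y\omega_2 = -X\omega_1$, i.e.~$\div\omega=0$ on $S_u$; combined with the previous paragraph this proves $(ii)$.

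\textbf{Main obstacle.} The hard part will be the regularity bookkeeping. Since $\omega\in\Lip_{loc}$ is not $C^1$, the derivatives above exist only Rademacher-a.e., and one must verify that the ruling-constancy identity, the unit-length identity, and the commutator-invariance all hold simultaneously at a.e.~point. The cleanest route is to work in the $(y,t)$-parameterisation $D\cong S_u$, where $\omega_i$ and $\cB u$ are honest Lipschitz functions on $D$ and the ruling structure from Theorem \ref{thm:main_1_intro}$(i)$ translates into an explicit transport equation for $\cB u$; the $X^r$-propagation invoked above is then justified by a distributional (Fubini-type) argument along the $X^r$-foliation of $\mathbb V\cdot D^r$.
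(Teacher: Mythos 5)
Your proposal follows essentially the same construction as the paper: extend the components of $\nu_{E_u}$ to the right cylinder $\mathbb V\cdot D^r$ via the retraction $\pi^r$ (equivalently, constancy along $X^r$-orbits), get $(i)$ and $(iii)$ for free, then prove $(ii)$ by combining the $X^r$-invariance of $\div\omega$ with a pointwise identity on $S_u$ derived from the ruling and the unit-length constraint. The algebraic elimination in your ``Divergence on $S_u$'' paragraph is precisely Step~3 of the paper's proof.

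The one place where your route, as written, has a gap is the invariance of $\div\omega$ along $X^r$. You argue $X^r(\div\omega)=0$ by the commutator identity $[X,X^r]=[Y,X^r]=0$ applied to $\tilde\omega_i=\omega_i\circ\Pi$; this interchange of $X^r$ with $X$ and $Y$ requires second derivatives of $\tilde\omega_i$, but $\tilde\omega_i$ is only locally Lipschitz, so $\div\omega$ is merely $L^\infty_{\loc}$ and the quantity $X^r(\div\omega)$ is not a.e.\ defined in the pointwise sense. You flag this yourself as the ``main obstacle,'' and the fix you suggest --- working in the $(\eta,\tau)$-parametrisation of $D^r$ --- is exactly what the paper does: it simply computes, for $p=(x,y,t)$ with $\pi^r(p)=(\eta,\tau)$,
\[
\div\omega(p)\;=\;X\lambda(p)+Y\mu(p)\;=\;\dfrac{\de\bar\mu}{\de\eta}(\eta,\tau)+4\eta\,\dfrac{\de\bar\lambda}{\de\tau}(\eta,\tau),
\]
a first-order identity that expresses $\div\omega$ as a function of $\pi^r(p)$ alone, hence $X^r$-invariant without differentiating $\div\omega$. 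Replacing your commutator argument by this direct computation (or, equivalently, carrying out the ``distributional Fubini'' you gesture at carefully) closes the gap and makes the two proofs coincide.

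One minor remark on regularity bookkeeping: you derive the Lipschitzness of $\omega_1,\omega_2$ on $S_u$ from $u\in C^{1,1}_H(D)$, which gives Lipschitzness in the $(y,t)$-chart on $D$. To turn this into local Lipschitzness of the extended field on $\mathbb V\cdot D^r$ you still need that the change of chart $D\to D^r$ (namely $G\circ F^{-1}$) and $u^r$ are locally Lipschitz, which is exactly the content of Proposition~\ref{PROP}/Theorem~\ref{lem:dx_graph}; the paper shortcuts this by defining $\bar\lambda,\bar\mu$ directly on $D^r$ through $u^r\in\Lip_{\loc}(D^r)$.
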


\begin{proof} The surface $R_\phi  = \de E_{u} \cap  D\cdot\mathbb V$ is parametrized by
	\begin{equation*}
		\rho(h,s) = p_1(s) + h( p_2(\lambda(s)) - p_1(s)),  \qquad \text{$(h,s) \in Q = (0,1) \times I$,}
	\end{equation*}
	and, by (ii) of Theorem  \ref{lem:exlambda},    $p_2(\lambda(s)) - p_1(s)\in H_{\rho(h,s) }$ for any $h\in (0,1)$, and thus $0\neq p_2(\lambda(s)) - p_1(s)= \bar \alpha(s) X (\rho(h,s)) +\bar \beta(s) Y(\rho(h,s))$, for functions $\bar\alpha,\bar\beta : I \to\R$
	independent of $h$. We define $\alpha,\beta: I\to\R$ 
	\[
	\alpha = \frac{\bar \alpha }{\sqrt{\bar\alpha^2+\bar\beta^2}}\qquad \textrm{and}\qquad
	\beta  = \frac{\bar \beta  }{\sqrt{\bar\alpha^2+\bar\beta^2}}.
	\]
	Since $R_\phi = S_u$ is an intrinsic graph along $X$, we have $\beta(s)\neq  0$ for any $s$, and without loss of generality
	we can assume that $\beta>0$. The inner normal $\nu_{E_u}$ is therefore given by the formula
	\begin{equation}   
		\nu_{E_u}(p) = \beta(s) X(p) - \alpha(s) Y(p), \qquad \text{for $p = \rho(h,s) \in  R_\phi$.} %\label{eq:hor_nor_+}
	\end{equation}

	\emph{Step 1.} We construct $\omega$ satisfying (i) and (iii). 
	By \eqref{eq:cond_invert_dx} and Theorem  \ref{lem:dx_graph},  $S_u=R_\phi= S^r_{u^r}$ 
	is the right intrinsic graph of a function $u^r\in\Lip_{loc}( D^r)$,
	$\sigma(y,t)  
	= (u^r(y,t), y,  t - 2  y u^r(y,t))$ for  $(y,t) \in D^r$,	and we can define the functions $\bar \lambda,\bar \mu : D^r\to\R$ via the identity
	\begin{equation*}
		\nu_{E_u} (\sigma ) = \bar  \lambda  X(\sigma)+ \bar \mu Y(\sigma).
	\end{equation*}
	Finally, we define $\lambda,\mu \in \Lip_{loc}(\mathbb V\cdot D^r)$ letting $\lambda (p) = \bar  \lambda(\pi^r(p))$ and $\mu(p)=  \bar \mu(\pi^r(p))$,
	where  $\pi^r:\He\to\mathbb W$ is  the projection onto $\mathbb{W}$ along the integral lines of  $X^r$. 
	We can finally define 
	$\omega\in\Lip_{loc}(\mathbb V\cdot D^r ; H)$ letting, for $p = (x,y,t) \in \mathbb V \cdot D^r$, 
	\begin{align*}
		\omega (p)  :&= \lambda(p) X(p) + \mu(p) Y(p) \\
		& =\bar  \lambda(y,t+2yx) X(p) + \bar \mu(y,t+2yx) Y(p).
	\end{align*}
	\noindent 
	By construction, we have $\omega= \nu_{E_u}$ on $R_\phi=S_u$.
	The vector field  $\omega$ is locally Lipschitz continuous in $\mathbb V \cdot D^r$, because so are $\lambda$ and $\mu$.
	
	%	Clearly, the above definition of $\nu$ is well-posed, because, if $p = (x,y,t) \in \R \cdot D^r$, then $\pi^r(p) \in D^r$. Moreover, by \eqref{eq:expr_def_hor_nor}, we know that
	%	\begin{equation} \label{eq:expr_lambda_mu_of_alpha_beta}
		%		\lambda(\eta,\tau)=-\beta(G^{-1}_2(\eta,\tau)) \quad \text{and} \quad \mu(\eta,\tau) = \alpha(G^{-1}_2(\eta,\tau)) , \quad \text{for every $(\eta,\tau) \in D^r$,}
		%	\end{equation}
	%	where $G^{-1} = (G^{-1}_1, G^{-1}_2)$ is the inverse of the transformation $G$ defined by \eqref{eq:G_1} and \eqref{eq:G_2}.	

	%	%$G^{-1}$ is locally Lipschitz continuous in $D^r$ and $\alpha$, $\beta$ are Lipschitz continuous in $s$, and then, by \eqref{eq:expr_lambda_mu_of_alpha_beta}, so are $\lambda$ and $\mu$. 
	%	Hence, $\nu$ is differentiable almost everywhere inside $\R \cdot D^r$. More precisely, indicating by $\tilde{D}^r$ the full-measure subset of $D^r$ where $\lambda$ and $\mu$ can be differentiated, $\nu$ is differentiable at all $p \in \R \cdot \tilde{D}^r$.
	%$\nu$ is differentiable at all points $p = (x,y,t) \in \R \cdot D^r$ with the property that $\lambda$ and $\mu$ are differentiable at $\Phi^r_{-x}(p) \in D^r$, and, in particular, $\nu$ is differentiable at $\Hau^2$-almost every point of $R$.
	\medskip
	
	%	
	%	\emph{Step 2.}
	%	Let us demonstrate that $\nu = \nu_{E^r}$ on $R$. In fact, if $q = \sigma(\eta,\tau) \in R$, recalling that $\sigma(\eta,\tau) = \Phi^r_{u^r(\eta,\tau)}(\eta,\tau)$, we have
	%	\begin{align*}
		%		\nu(q) & = \lambda \left( \pi^r(\Phi^r_{u^r(\eta,\tau)}(\eta,\tau)) \right) X(q) + \mu \left( \pi^r(\Phi^r_{u^r(\eta,\tau)}(\eta,\tau)) \right) Y(q) \\
		%		& = \lambda(\eta,\tau) X(q) + \mu(\eta,\tau) Y(q) \\
		%		& = g\big( \nu_{E^r}(\sigma(\eta,\tau)),X(\sigma(\eta,\tau)) \big) X(\sigma(\eta,\tau)) + g\big(\nu_{E^r}(\sigma(\eta,\tau)),Y(\sigma(\eta,\tau))\big) Y(\sigma(\eta,\tau)) \\
		%		& = \nu_{E^r}(\sigma(\eta,\tau)) = \nu_{E^r}(q).
		%	\end{align*}
	%	

	\emph{Step 2.} Let $\Phi_s:\He\to\He$, $s\in\R$, be the flow of $X^r$.
	We claim that, for a.e.~$p=(x,y,t)  \in \mathbb V \cdot{D}^r$,
	\begin{equation} \label{eq:inv_div_H_nu}
		\div \omega  (\Phi^r_s(p)) = \div  \omega (p) , \qquad \text{for any $s \in \R$.}
	\end{equation}
	Indeed, using the coordinates $(\eta,\tau) =   \pi^r(p) = (y,  t+2yx)$, we have: 
	\begin{equation}
		\begin{split}
			\div  \omega  (p) & = X(p)(\lambda(p)) + Y(p)(\mu(p)) \\
			& = \dfrac{\de}{\de x}[\bar \lambda(y,t+2yx)] + 2y  \dfrac{\de}{\de t}[\bar \lambda(y,t +2yx)] +
			\\
			&\qquad + \dfrac{\de}{\de y}[\bar \mu(y,t +2yx)] - 2x \dfrac{\de}{\de t}[ \bar  \mu(y,t +2yx)] 
			\\
			& = \dfrac{\de\bar  \mu}{\de \eta}  (y, t+2yx) + 4 y \, \dfrac{\de \bar \lambda}{\de \tau}  (y, t+2yx) \\
			& = \dfrac{\de\bar  \mu}{\de \eta}  (\eta,\tau) + 4 \eta \, \dfrac{\de \bar \lambda}{\de \tau}  (\eta,\tau ) .
		\end{split}
	\end{equation}
	This   gives \eqref{eq:inv_div_H_nu}, because $\pi^r(\Phi^r_s(p)) = \pi^r(p)$ for any $s\in\R$.
	\medskip
	
	%\emph{Step 5.}
	\emph{Step 3.} We claim that 
	\begin{equation} \label{eq:div_H_nu=0_on_R}
		\div \omega (p)  = 0 , \qquad \text{for ${\mathscr H}^2$-a.e.~$p\in R_\phi$.}
	\end{equation}
	Along with Step 2, this will conclude the proof of (ii).

	Let $Z$ be the horizontal vector field in  $\mathbb V \cdot D^r$ defined by
	\begin{equation*}
		Z  := \mu X - \lambda Y.
	\end{equation*}
	
	From (ii) in Theorem \ref{lem:exlambda}, the functions $\alpha$ and $\beta$ and thus also the functions $\lambda$ and $\mu$ are constant along the integral lines of $Z$ foliating $R_\phi$:
	\begin{align}
		& 0 = Z \lambda= \mu X\lambda- \lambda Y\lambda, \qquad \text{on $R_\phi $} \label{eq:tau(lambda)_1} \\
		& 0 = Z \mu = \mu X\mu - \lambda Y\mu, \qquad \text{on $R_\phi$.} \label{eq:tau(lambda)_2}
	\end{align}
	On the other hand,  from $\lambda^2 + \mu^2 = 1$ we deduce that:
	\begin{align}
		& 0 = \dfrac{1}{2}  X(\lambda^2 + \mu^2) = \lambda X\lambda + \mu X\mu ,
		\qquad \text{a.e.~in $\mathbb V  \cdot D^r$,} \label{eq:X(lambda^2+mu^2)} \\
		& 0 = \dfrac{1}{2} \, Y(\lambda^2 + \mu^2) = \lambda Y\lambda+ \mu Y\mu,  \qquad \text{a.e.~in $\mathbb V \cdot{D}^r$.} \label{eq:Y(lambda^2+mu^2)}
	\end{align}
	Recalling that $\lambda^2+\mu^2 \equiv 1$, a direct computation yields
	\begin{equation*}
		\begin{split}
			\div \omega & = X \lambda + Y \mu \\ %& = (\lambda^2+\mu^2)(X \lambda + Y \mu) \\
			& = \mu(\mu X \lambda - \lambda Y \lambda) + \lambda(\lambda X \lambda + \mu X \mu) - \lambda(\mu X \mu - \lambda Y \mu) + \mu(\lambda Y \lambda + \mu Y \mu) \, .
		\end{split}
		%\div \omega  = X \lambda + Y \nu= \mu \eqref{eq:tau(lambda)_1} + \lambda \eqref{eq:X(lambda^2+mu^2)} - \lambda \eqref{eq:tau(lambda)_2} + \mu \eqref{eq:Y(lambda^2+mu^2)} =0,\qquad \text{${\mathscr H }^2$-a.e.~on $R_\phi$.} 
	\end{equation*}
	Therefore, applying \eqref{eq:tau(lambda)_1}, \eqref{eq:tau(lambda)_2}, \eqref{eq:X(lambda^2+mu^2)}, and \eqref{eq:Y(lambda^2+mu^2)}, we obtain
	\begin{equation*}
		\div \omega = 0 , \qquad \text{${\mathscr H }^2$-a.e.~on $R_\phi$.} 
	\end{equation*}
\end{proof}

\begin{remark} \label{rem:u_in_C11H}
	Note that, by \emph{(iii)} of Lemma \ref{lem:calibration} above, $\nu_{E_u}$ is Lipschitz continuous on the intrinsic graph $S_u$. This implies that $u \in C^{1,1}_H(D)$.
\end{remark}

By Lemma \ref{lem:calibration}, $E_{u^r}$ is a minimizer of the $H$-perimeter in the right cylinder $ \mathbb V \cdot D^r$:

\begin{theorem} \label{thm:E_minimizer_P_H} 
	Let $\phi\in \Lip(\partial D)$ satisfy  \eqref{eq:cond_invert_dx} and let $u^r$ be as in Theorem  \ref{lem:dx_graph}.
	Then for any $\cL$-measurable set $F\subset  \mathbb V \cdot D^r$ such that $E_{u^r}^r \difsim F \Subset  \mathbb V \cdot D^r$
	we have: 
	\begin{equation} \label{eq:E^r_per_min}
		\p_H(E^r_{u^r} ;  \mathbb V \cdot D^r) 
		\leq \p_H(F; \mathbb V \cdot D^r).
	\end{equation}
\end{theorem}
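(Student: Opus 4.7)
The plan is to carry out the standard calibration argument of Barone Adesi--Serra Cassano--Vittone \cite{BaroneAdesi_SerraC_Vitt_2007}, with $\omega$ from Lemma \ref{lem:calibration} playing the role of the calibration. Write $U := \mathbb V \cdot D^r$ and $E := E^r_{u^r}$. Since $S_u = S^r_{u^r}$ coincides with $\partial^* E \cap U$ up to $\mathscr H^2$-null sets, the identity $\omega = \nu_{E_u}$ on $S_u$ in item $(iii)$ of Lemma \ref{lem:calibration} translates---after a routine compatibility check between the orientations of the left and right epigraphs (both place the ``above'' side on the same halfspace of the horizontal tangent plane at smooth points of $R_\phi$)---into $\omega = \nu_E$, $\mu_E$-a.e.~on $\partial^* E \cap U$.

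The main step will be the flux equality
\begin{equation} \label{eq:flux_prop}
\int_{\partial^* E \cap U} \langle \omega, \nu_E \rangle \, d\mu_E = \int_{\partial^* F \cap U} \langle \omega, \nu_F \rangle \, d\mu_F,
\end{equation}
valid for every $\cL$-measurable $F$ with $E \difsim F \Subset U$. To prove it, fix $\varphi \in C^1_c(U;[0,1])$ with $\varphi \equiv 1$ on an open $V$ satisfying $\overline{E \difsim F} \subset V \Subset U$. Since $\omega \in \Lip_{loc}(U;H)$ by Lemma \ref{lem:calibration}, the product $\varphi \omega$ is Lipschitz, horizontal, compactly supported in $U$, and $\|\varphi \omega\|_\infty \le 1$; a convolution of $\omega$ with a standard mollifier on $\spt \varphi$ makes $\varphi \omega$ admissible as test field in the weak definition of $P_H$ for both $E$ and $F$. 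Applying \eqref{NN} to $E$ and to $F$ and subtracting yields
\[
\int_{\partial^* E \cap U} \varphi \langle \omega, \nu_E \rangle \, d\mu_E - \int_{\partial^* F \cap U} \varphi \langle \omega, \nu_F \rangle \, d\mu_F = \int_{U} (\chi_F - \chi_E) \, \div (\varphi \omega) \, d\cL .
\]
Expanding $\div(\varphi \omega) = \varphi \, \div \omega + \langle \nabla_H \varphi, \omega \rangle$, invoking $\div \omega = 0$ from item $(ii)$ of Lemma \ref{lem:calibration}, and observing that $\nabla_H \varphi$ vanishes on $V \supset \overline{E \difsim F}$ while $\chi_F - \chi_E$ is supported in $\overline{E \difsim F}$, the right-hand side vanishes. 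On $(\partial^* E \cap U) \setminus V$ the essential boundaries of $E$ and $F$ agree with matching inner normals, and there $\varphi$ can be replaced by $1$ without changing the integrals, producing \eqref{eq:flux_prop}.

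From \eqref{eq:flux_prop} the minimality is immediate: by items $(i)$ and $(iii)$ of Lemma \ref{lem:calibration}, $\langle \omega, \nu_E \rangle = |\nu_E|^2 = 1$ $\mu_E$-a.e.~on $\partial^* E \cap U$, so the left-hand side equals $P_H(E; U)$; by item $(i)$ and Cauchy--Schwarz, $\langle \omega, \nu_F \rangle \le 1$ $\mu_F$-a.e., so the right-hand side is at most $P_H(F; U)$. The main technical point is the admissibility of the Lipschitz field $\varphi \omega$ in the weak definition of $P_H$, which prescribes $C^1_c$ test fields: this is handled by smoothing $\omega$ on a neighborhood of $\spt \varphi$ while preserving $\|\cdot\|_\infty \le 1$ and having $\div \omega_\epsilon \to 0$ in $L^1$, and passing to the limit in the identity above. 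The only other bookkeeping point is the orientation alignment of $\nu_{E_u}$ and $\nu_{E^r_{u^r}}$ alluded to in the first paragraph, which reduces to observing that the two epigraphs coincide as subsets of $U$, both being the ``$+x$'' side of the common surface $R_\phi$.
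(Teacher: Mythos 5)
Your proof is correct and takes the same calibration approach as the paper: the paper's proof is a one-line citation of Lemma \ref{lem:calibration} together with Theorem 2.1 of \cite{BaroneAdesi_SerraC_Vitt_2007}, and you have simply re-derived the substance of that cited theorem in place. Your flux-equality argument, the pointwise bounds $\langle\omega,\nu_E\rangle=1$ and $\langle\omega,\nu_F\rangle\le 1$, the orientation check identifying $\nu_{E_u}$ with $\nu_{E^r_{u^r}}$ on $R_\phi$, and the mollification step needed to feed the merely Lipschitz calibration field into the weak definition of $\p_H$ are all in order.
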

\begin{proof} 
	It follows from Lemma  \ref{lem:calibration} 
	and Theorem 2.1 in \cite{BaroneAdesi_SerraC_Vitt_2007}.
\end{proof}

We have the set-theoretical identities:
\[
\partial E_u \cap D\cdot \mathbb V = S_u = R_\phi = S^r_{u^r} = \partial E_{u^r}^r \cap \mathbb V \cdot D^r,
\]
where $u^r \in \Lip_{loc}(D_r)$ represents $R_\phi$ as an intrinsic graph along $X^r$.
Let  $n_R$ be the Euclidean unit normal to $R_{\phi}$ and define $ N_E := \left( \langle n_{R} , X \rangle, \langle n_{R} , Y \rangle \right) \in \R^2$,
where $\langle\cdot,\cdot \rangle $ is the standard scalar product in $\R^3$. Then we have the following identities:
\begin{equation} 
	\label{eq:formula_per_H_Lip}
	\p_H(E_u; D\cdot \mathbb V)  = \int_{S_u } |N_{E_u}| \, d \mathscr H^{2}= \int_{S_{u^r}^r } |N_{E_u}| \, d \mathscr H^{2}= 	\p_H(E^r_{u^r} ;  \mathbb V \cdot D^r) .
\end{equation}
Thus, if  for some competitor $v\in \Lip(D)$ with $v=\phi$ on $\de D$ we have $S_v = \de F \cap  \mathbb V \cdot D^r$  for some $F\subset  \mathbb V \cdot D^r$,
then inequality 
\eqref{eq:E^r_per_min} reads:
\begin{equation} \label{eq:HArea[u]leqHArea[v]}
	\cA_D(u)\leq \cA_D(v) .
\end{equation}

In the next Lemma we provide a condition guaranteeing that  the left graph $S_v$ of a competitor $v$ is also a right graph over the domain $D^r$.

Let us fix   some notation. Given $y \in \R$, we denote by $D_y$ the set
\begin{equation*}
	D_y := \{ t \in I \, : \, (y,t) \in D \} = (t_y^-,t_y^+) ,
\end{equation*}
and we define the interval $Y_D := \{ y \in \R \, : \, D_y \neq \emptyset \}$.
Given a continuous function $v : D \to \R$, we  let
\begin{equation} \label{eq:def_beta}
	\beta^v_y(t): = t + 4yv(y,t) , \qquad t \in D_y .
\end{equation}

\begin{theorem} \label{lem:minimization_intr_gr} Let $\phi \in \Lip(\de D)$ satisfy  \eqref{eq:cond_invert_dx}
	and let $v\in C(\bar D)$ be such that $v=\phi$ on $\de D$ and $\cB v\in L^{\infty}_{loc}(D)$.
	Assume that for every $y \in Y_D$ we have:
	\begin{equation} \label{eq:cond_beta_incr}
		\text{$D_y \ni t \mapsto \beta^v_y(t)$ is increasing.}
	\end{equation}
	Then  $\cA_D(u) \leq \cA_D(v)$ with equality if and only if  $u=v$.
\end{theorem}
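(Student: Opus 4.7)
The plan is to interpret hypothesis \eqref{eq:cond_beta_incr} as the precise geometric condition that the left intrinsic graph $S_v$ is simultaneously a right intrinsic graph over the same domain $D^r \subset \mathbb{W}$ built for $u$ in Theorem \ref{lem:dx_graph}, and then to invoke Theorem \ref{thm:E_minimizer_P_H} together with the calibration $\omega$ of Lemma \ref{lem:calibration}.

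First I would show that $S_v$ is a right intrinsic graph over some domain $D^v_r \subset \mathbb{W}$. A point of $S_v$ has coordinates $(v(y,t), y, t+2yv(y,t))$, and its image under the right projection $\pi^r(x,y,t) = (y, t+2xy)$ is $(y, \beta^v_y(t))$. The monotonicity in \eqref{eq:cond_beta_incr} makes $\pi^r|_{S_v}$ a bijection onto $D^v_r := \pi^r(S_v)$, so setting $v^r(y,\beta^v_y(t)) := v(y,t)$ gives a continuous representation $S_v = S^r_{v^r}$. Next, since $u = v = \phi$ on $\de D$, we have $\beta^u_y(t) = \beta^v_y(t) = t + 4y\phi(y,t)$ at every boundary point, so $D^v_r$ and $D^r$ share the same boundary curve in $\mathbb{W}$. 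Combining this with invariance of domain (applied to the continuous injection $(y,t)\mapsto(y,\beta_y^v(t))$) forces $D^v_r = D^r$ and $v^r = u^r$ on $\de D^r$. Finally, continuity of $v$ on $\bar D$ together with $v = u$ on $\de D$ gives a continuous extension of $v^r$ up to $\de D^r$, and the symmetric difference $E^r_{u^r} \difsim E^r_{v^r}$, trapped between the two compact right graphs that agree on $\de D^r$, has compact closure inside $\mathbb V \cdot D^r$.

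At this point $E^r_{v^r}$ is an admissible competitor in Theorem \ref{thm:E_minimizer_P_H}, which yields
\[
\p_H(E^r_{u^r}; \mathbb V \cdot D^r) \leq \p_H(E^r_{v^r}; \mathbb V \cdot D^r).
\]
The right-handed analogue of \eqref{eq:hor_per_X_intr_epi}, combined with the identity \eqref{eq:formula_per_H_Lip}, converts both sides into the values $\cA_H(u)$ and $\cA_H(v)$, and this gives the required inequality $\cA_H(u) \leq \cA_H(v)$.

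For the equality case, I would return to the calibration identity of \cite{BaroneAdesi_SerraC_Vitt_2007}. Using $\div \omega = 0$ and the divergence theorem applied to $E^r_{u^r} \difsim E^r_{v^r}$, together with $|\omega|=1$ and $\omega = \nu_{E^r_{u^r}}$ on $S^r_{u^r}$, one derives
\[
\cA_H(v) - \cA_H(u) = \int_{S^r_{v^r}} \bigl(1 - \langle \nu_{E^r_{v^r}}, \omega\rangle\bigr)\, d\mu_{E^r_{v^r}} \geq 0,
\]
with equality forcing $\nu_{E^r_{v^r}} = \omega$ almost everywhere on $S^r_{v^r}$. Since the horizontal normal to a right intrinsic graph is a function of its right-Burgers operator, this produces an equation of the form $\cB^r v^r = \cB^r u^r$ in $D^r$ with identical boundary data, and uniqueness along the characteristics of $\omega$ — which by construction foliate $\mathbb V \cdot D^r$ and reach $\de D^r$ — forces $v^r = u^r$, hence $v = u$. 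I expect the subtlest point to be this final uniqueness step: one must verify that every characteristic of $\omega$ in $D^r$ meets $\de D^r$, so that the boundary data determine $v^r$ everywhere. This will rely on the ruled structure of $S_u$ and on the bi-Lipschitz estimates for $\lambda$ from Theorem \ref{lem:exlambda}.
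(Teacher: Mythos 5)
Your plan follows the same calibration strategy as the paper: realize $S_v$ as a right intrinsic graph over $D^r$ using \eqref{eq:cond_beta_incr}, calibrate with $\omega$, and extract the equality case from the rigidity of the calibration. Two steps, however, have genuine gaps.

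First, the reduction to Theorem \ref{thm:E_minimizer_P_H} does not go through as stated. That theorem requires $E^r_{u^r} \difsim F \Subset \mathbb V \cdot D^r$, and you assert that $E^r_{u^r} \difsim E^r_{v^r}$ has compact closure inside $\mathbb V \cdot D^r$. This is false: the closure of the symmetric difference contains the points $\Phi^r_s(w)$ with $w \in \de D^r$ and $s = u^r(w) = v^r(w)$, which lie on the lateral boundary $\mathbb V \cdot \de D^r$, not in the interior of the cylinder, so compact containment fails. The paper avoids this by applying the Divergence Theorem of \cite{FSSC_Rect_Per_M_Ann_2001} directly to $\omega$ on $E^r_{u^r} \difsim E^r_{v^r}$, obtaining
\[
0 = \int_{E^r_{u^r} \difsim E^r_{v^r}} \div \omega\, d\cL = \int_{S_v} \langle \omega, \nu_{E_v}\rangle\, d\mu_{E_v} - \int_{S_u} \langle \omega, \nu_{E_u}\rangle\, d\mu_{E_u},
\]
where the flux through the lateral boundary vanishes precisely because $u^r = v^r$ there. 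You would need either an exhaustion argument or the direct divergence theorem, as the paper uses.

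Second, your equality argument passes to an equation ``$\cB^r v^r = \cB^r u^r$'' and invokes uniqueness along characteristics of $\omega$, flagging reachability of $\de D^r$ as the delicate point. The more serious obstruction is earlier: $v$ is only assumed in $C(\bar D)$ with $\cB v \in L^\infty_{loc}(D)$, so that equation is not a priori meaningful at this regularity and the characteristic argument cannot be launched. The paper addresses exactly this. From $\nu_{E_v} = \omega$ on $S_v$, it first invokes \cite[Theorem 1.2]{Monti_Vittone_2012} to upgrade $v$ to $C^1_H(D)$, then uses \cite[Theorem 1.2]{Bigolin_Carav_SC_2015} to realize $\cB v$ as the derivative of $v$ along the characteristics $\dot\kappa = (1, -4v(\kappa))$ in a genuine $C^1$ sense, lifts these to curves on $S_v$ tangent to $J\omega$, and finally observes (from the ruled structure of $S_u$ and the $0$-homogeneity of $\omega$ along $X^r$) that the integral curves of $J\omega$ are horizontal straight lines. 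Hence $S_v$ is horizontally ruled and, by the boundary agreement, $v = u$. This regularity-upgrade chain is what your sketch omits and is essential before any uniqueness-along-characteristics reasoning can be made rigorous.
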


\begin{proof} Let   $S_v$  be the intrinsic graph of $v$. Using \eqref{eq:cond_beta_incr} and the fact that $v=u$ on $\de D$ we obtain:
	\[
	\begin{split}
		\pi^r(S_v) & = \big\{(y,\beta_y^v(t)) \in\mathbb W : (y,t)\in D\big\}
		\\
		& = \bigcup_{y \in Y_D} \{y\} \times \big( \beta_y^v (t_y^-), \beta_y^v (t_y^+)\big)
		\\
		& = \bigcup_{y \in Y_D} \{y\} \times \big( \beta_y^u  (t_y^-), \beta_y^u (t_y^+)\big) 
		\\
		&= \pi^r (S_u) = D^r.
	\end{split}
	\]
	Since the map $D\ni (y,t) \to (y, \beta_y^v(t))$ is injective, there exists a function $v^r\in C(D^r)$  
	such that $S^r_{v^r}= S_v$  and $v^r= u^r$ on $\de D^r$.
	
	Now, by applying the Divergence Theorem proved in \cite{FSSC_Rect_Per_M_Ann_2001}, we have:
	\[
	0 = \int_{E^r_{u^r} \difsim E^r_{v^r}} \div \omega\,  d\cL  =\int _{S_v}\langle \omega, \nu_{E_v} \rangle
	d\mu_{E_v}  - \int _{S_u}\langle \omega, \nu_{E_u} \rangle d\mu_{E_u} .
	\]
	Using $\langle \omega, \nu_{E_u} \rangle=1$ on $S_u$
	and $ \langle \omega, \nu_{E_v} \rangle\leq 1$, and the area formula \eqref{eq:hor_per_X_intr_epi}
	we conclude that  $\cA_D(u) \leq \cA_D(v)$.
	
	In the case of equality  $\cA_D(u) = \cA_D(v)$, we deduce that $\nu_{E_v} = \omega $, $\mu_{E_v}$-a.e.~on $S_v$. Namely, at the point 
	$p =\big(v(y,t),y,t+2y v(v,t)\big)\in S_v$we have
	\begin{equation}\label{M1}
		\frac{1}{\sqrt{1+\cB v(y,t)^2}} X(p) - 
		\frac{\cB v(y,t)}{\sqrt{1+\cB v(y,t)^2}}  Y(p) = \nu_{E_v} (p) = \omega (p).
	\end{equation}
	Theorem 1.2 in \cite{Monti_Vittone_2012} implies that $v \in C^1_H(D)$, and in particular it is locally intrinsic Lipschitz, i.e., $\cB v \in L^\infty_{loc}(D)$.  Let $\kappa:(-\delta,\delta) \to  D$, for some $\delta>0$ be an integral curve of the vector field
	$Z= \partial _y-4v(y,t) \partial _t$, $\dot \kappa = Z(\kappa)=(1,-4v(\kappa))$. By Theorem 1.2 in \cite{Bigolin_Carav_SC_2015}, 
	the function $s\mapsto \vartheta(s) : = v(\kappa(s))$ is differentiable for a.e.~$s$ and 
	\begin{equation}
		\label{theta}
		\dot\vartheta(s) = \cB v (\kappa(s)),\quad s\in (-\delta,\delta),
	\end{equation}
	and thus, in fact, $\vartheta \in C^1(-\delta,\delta)$.  Let $\gamma\in C^1\big( (-\delta,\delta);S_v)\big)$ be the lift of $\kappa$ on the graph of $v$, namely,
	\[
	\gamma(s) = \big( \vartheta(s), \kappa_1(s), \kappa_2(s) + 2 \kappa_1(s) \vartheta(s)\big).
	\]
	The derivative of $\gamma$ is
	\begin{equation}\label{M2}
		\begin{split}
			\dot \gamma (s) &=\dot\vartheta(s)  \big(1,0,2\kappa_1(s) \big) + \big( 0, 1,- 2\vartheta(s)\big) 
			\\
			& = \cB v (\kappa(s)) X (\gamma(s) )+ Y(\gamma(s)). 
		\end{split}
	\end{equation}
	Letting
	$ J  \omega = J(\omega_1 X + \omega_2 Y) = - \omega_2 X + \omega_1 Y$ and denoting by $\bar\gamma$ the arc length parameterization of $\gamma$, from \eqref{M1} and \eqref{M2} we deduce that
	\[
	\dot{\bar\gamma} = J\omega (\bar\gamma).
	\]
	A direct check shows that the integral curves of $J\omega$ are horizontal lines. This follows 
	from  the fact the $S_u$ is foliated by horizontal lines and from the construction of $\omega$.
	
	The previous argument proves that   the graph  $S_v$ is foliated by horizontal lines and since $v=u$ on $\partial D$ we conclude that $v=u$ also in $D$.
\end{proof}

\begin{remark} \label{rem:u_stationary}
	We observe that $u$ is stationary for $\cA_D$. To prove this, we apply Theorem \ref{lem:minimization_intr_gr}. First, note that the transformation $D \ni (y,t) \mapsto A(y,t) := (y,\beta_y^u(t))$, where $\beta_y^u$ is given by \eqref{eq:def_beta}, coincides with $G \circ F^{-1}$, where $F$ and $G$ are defined in \eqref{eq:F_1}, \eqref{eq:F_2} and \eqref{eq:G_1}, \eqref{eq:G_2}, respectively. Hence, $A^{-1} = F \circ G^{-1}$ and, by the properties of $F$ and $G$, $A^{-1}$ is locally Lipschitz continuous in $D^r$. In particular, given $K \Subset D^r$, there exists a constant $C_K > 0$ such that
	\begin{equation*}
		|\de_{\tau} A^{-1}(\eta,\tau)| \leq C_K , \qquad \text{for all $(\eta,\tau) \in K$,}
	\end{equation*}
	and consequently, by the definition of $A$,
	\begin{equation} \label{eq:low_bd_beta'}
		\dfrac{d}{dt} \beta^u_y(t) \geq C_K^{-1} , \qquad \text{for all $(y,t) \in A^{-1}(K)$.}
	\end{equation}
	Let $v \in C^{\infty}_c(D)$. When $(y,t) \notin \spt(v)$, we have
	\begin{equation*}
		\dfrac{d}{dt} \beta^{u + \epsilon v}_y(t) = \dfrac{d}{dt} \beta^u_y(t) > 0 .
	\end{equation*}
	If $(y,t) \in \spt(v)$, then by \eqref{eq:low_bd_beta'} it follows that
	\begin{equation*}
		\dfrac{d}{dt} \beta^{u + \epsilon v}_y(t) = \dfrac{d}{dt} \beta^{u}_y(t) + \epsilon \de_t v (y,t) \geq C_{A(\spt(v))}^{-1} + \epsilon \de_t v (y,t) .
	\end{equation*}
	Provided that $\epsilon > 0$ is small enough, we then infer that, for all $y \in Y_D$, the derivative of $\beta^{u + \epsilon v}_y$ is positive on $D_y$. This allows us to apply Lemma~\ref{lem:minimization_intr_gr} to $u + \epsilon v$, from which we deduce that $\cA_D(u) \leq \cA_D(u + \epsilon v)$, and in particular,
	\begin{equation*}
		\dfrac{d}{d \epsilon} \Big|_{\epsilon = 0} \cA_D(u + \epsilon v) = 0 .
	\end{equation*}
	Hence, $u$ is a stationary point for $\cA_D$, as desired.
\end{remark}

\section{A regularity result for $H$-perimeter minimizers}
\label{REG}

We use the results obtained in the previous sections  to prove a regularity theorem for minimizers of the $H$-area.

For $0<\rho<r$, let  $\gamma_{r,\rho}: I\to\R$, $I:= (0,2r)$, be the function
\begin{equation*}
	\gamma_{r,\rho}(s) := \dfrac{\rho}{r^2} \, s (2r - s) \qquad \text{for $s \in I$,}
\end{equation*}
and consider the domain $D_{r,\rho} =(0,- r) + D_{\gamma_1,\gamma_2}$, where $D_{\gamma_1,\gamma_2}$ is as in \eqref{D} with $\gamma_1 = - \gamma_{r,\rho}$, $\gamma_2 = \gamma_{r,\rho}$, i.e.,  
\begin{equation*}
	D_{r,\rho} := \{ (y,t-r) \in \mathbb{W} \, : \, t \in I, \, |y| < \gamma_{r,\rho}(t) \} .
\end{equation*}
Moreover, for $w_0 = (y_0,t_0)$, we set
\begin{equation*}
	D_{r,\rho}(w_0) := w_0 + D_{r,\rho} \qquad \text{and} \qquad 
	B_r (w_0) = \big\{ w\in\mathbb W\,:\, |w-w_0|< r\big\} ,
\end{equation*}
where $|\cdot|$ is the standard norm on the plane. 
Notice that, for $\rho<r$, $D_{r,\rho}(w_0) \subset  B_{r}(w_0)$.

Finally,  consider the  half-balls:
\begin{equation*} 
	B_r^{+}(w_0) := \{ (y,t) \in B_r(w_0) \, : \, y > y_0 \} \quad \text{and} \quad B_r^{-}(w_0) := \{ (y,t) \in B_r(w_0) \, : \, y < y_0 \} ,
\end{equation*}
and, for a function $v: B_{r}(w_0)\to \R $, define $\ell^v_{r} : (0,r)\to [0,\infty]$ by
\begin{equation}\label{elle}
	\ell^v_{r}  (\rho;w_0)=  \max\{ \Lip(v, B_{r}^+(w_0) \setminus D_{r,\rho}(w_0)), \Lip(v, B_{r}^-(w_0) \setminus D_{r,\rho}(w_0)) \} .
\end{equation}

\begin{theorem} \label{thm:reg}
	Let $\Omega \subset \mathbb{W}$ be an open set and let  $v \in C(\Omega)$ be a function such that $\cB v \in L^\infty_{loc}(\Omega)$. 
	Assume that:
	\begin{itemize}
		\item[(i)] The function $v$ is a minimizer of the $H$-area for compact perturbations. Namely, for any 	 $\widetilde v\in C(\Omega)$ such that $ \spt(v-\widetilde v ) \Subset \Omega' \Subset \Omega $ we have:
		\begin{equation} \label{eq:min_H_Area}
			\cA_{\Omega'}(v)\leq \cA_{\Omega'}(\widetilde v) .
		\end{equation}
		
		\item[(ii)] For all $w_0 = (y_0,t_0) \in \Omega$  there exists $r > 0$ such that $B_{r}(w_0) \subset \Omega$ and 
		\begin{equation} \label{eq:cond_Lip_loc}
			\lim_{\rho\to 0^+} \rho 	\,
			\ell^v_{r}  (\rho;w_0)=0 . 
		\end{equation}
	\end{itemize}
	Then $v\in\Lip_{loc}(\Omega) \cap C^{1,1}_{H,loc}(\Omega)$ and its left intrinsic graph $S_v$ is foliated by horizontal straight lines.
\end{theorem}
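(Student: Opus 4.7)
The plan is to use a localization-and-comparison argument: around each point $w_0 \in \Omega$ we build a small lenticular neighborhood $D_{r,\rho}(w_0)$ on which the Plateau machinery of Sections~\ref{INTERPOL}--\ref{CAL} applies with boundary datum $\phi := v|_{\partial D_{r,\rho}(w_0)}$, producing a ruled Lipschitz (in fact $C^{1,1}_H$) solution $u_\rho$, and then show $v = u_\rho$ on $D_{r,\rho}(w_0)$ by a two-sided comparison of the $H$-area. Regularity and the ruled structure of $S_v$ then propagate from $u_\rho$ to $v$, and a covering argument completes the proof.

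Concretely, I would first fix $w_0 \in \Omega$ and use the left-translation invariance of $\cA_H$ (which also maps left intrinsic graphs along $X$ to left intrinsic graphs along $X$) to reduce to $w_0 = 0$, $v(0) = 0$. Choose $r$ as in hypothesis~(ii); by continuity of $v$, shrinking $r$ we may assume $\|v\|_{L^\infty(B_r)}$ is arbitrarily small. For $\rho \in (0, r)$ and $D_{r,\rho} \Subset B_r \subset \Omega$, one has $\|\gamma\|_\infty \leq \rho$, $\Lip(\gamma) \leq 2\rho/r$, while the boundary parametrizations $\phi_1,\phi_2$ of $v|_{\partial D_{r,\rho}}$ satisfy $\|\phi\|_\infty \leq \|v\|_{L^\infty(B_r)}$ and $\Lip(\phi) \leq C\,\ell^v_{r}(\rho;0)$, the latter since $\partial D_{r,\rho}$ lies in the closure of $B_r^{\pm}\setminus D_{r,\rho}$. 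Combining with \eqref{zeta} gives
\[
\zeta(\rho) \leq C\,\rho\Bigl(1+\tfrac{1}{r}\Bigr)\bigl(\|v\|_{L^\infty(B_r)} + \ell^v_{r}(\rho;0)\bigr),
\]
and hypothesis~(ii) together with the smallness of $v$ forces $\zeta(\rho)\to 0$ as $\rho\to 0^+$. Thus, for all sufficiently small $\rho$, $\zeta(\rho) < (\sqrt{721}-25)/48$ and Theorems~\ref{thm:main_1_intro}, \ref{lem:dx_graph}, \ref{thm:main_2_intro} yield $u = u_\rho \in \Lip(D_{r,\rho})\cap C^{1,1}_H(D_{r,\rho})$ with $u = v$ on $\partial D_{r,\rho}$ and $S_u$ foliated by horizontal segments.

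The comparison then proceeds in two directions. Extending $u$ by $v$ outside $D_{r,\rho}$ produces a compact perturbation $\widetilde v$ of $v$ in $\Omega$, so hypothesis~(i) gives $\cA_H(v;D_{r,\rho}) \leq \cA_H(u;D_{r,\rho})$. For the reverse inequality I would invoke Lemma~\ref{lem:minimization_intr_gr} with competitor $v$, obtaining $\cA_H(u;D_{r,\rho}) \leq \cA_H(v;D_{r,\rho})$ together with the equality-implies-equality-of-graphs conclusion $u=v$ on $D_{r,\rho}$. Once $v \equiv u_\rho$ on this lenticular neighborhood, $v$ inherits the Lipschitz and $C^{1,1}_H$ regularity as well as the horizontal ruling of $S_{u_\rho}$. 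A standard covering argument then delivers $v \in \Lip_{\mathrm{loc}}(\Omega)\cap C^{1,1}_{H,\mathrm{loc}}(\Omega)$ with $S_v$ globally foliated by horizontal straight lines.

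The main obstacle I foresee is the verification of the monotonicity hypothesis \eqref{eq:cond_beta_incr} of Lemma~\ref{lem:minimization_intr_gr} for the competitor $v$: since $v$ is merely continuous with $\cB v \in L^\infty_{\mathrm{loc}}$, the derivative $\partial_t v$ does not exist classically and one cannot directly estimate $\partial_t\beta^v_y = 1+4y\partial_t v$. I expect this to be handled either by approximating $v$ by smoother intrinsic Lipschitz functions preserving the $\cB$-bound, checking monotonicity of $\beta^{v_\epsilon}_y$ (where the smallness of $\rho|y|$ on $D_{r,\rho}$ combined with a uniform $\partial_t$-bound for the approximants suffices) and passing to the limit, or alternatively by working directly with the injectivity of the right projection $\pi^r$ on $S_v$ over the small set $D_{r,\rho}$—exactly the quantitative piece of control that hypothesis \eqref{eq:cond_Lip_loc} is designed to provide.
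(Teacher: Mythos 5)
Your overall architecture matches the paper's: localize at $w_0$, build the ruled $u$ on a small lens $D_{r,\rho}(w_0)$ with boundary datum $\phi = v|_{\partial D_{r,\rho}(w_0)}$, compare $\cA_H(v)$ and $\cA_H(u)$ from both sides, and conclude $v = u$ via the equality case of Lemma~\ref{lem:minimization_intr_gr}. The $\zeta$ estimate is also the right one (though you need neither the normalization $v(w_0)=0$ nor the device of ``shrinking $r$ to make $\|v\|_{L^\infty}$ small'': the term $\rho\,\|v\|_{L^\infty(B_r(w_0))}$ in the $\zeta$ bound already tends to $0$ as $\rho\to 0^+$ for a fixed $r$, and only the term $\rho\,\ell^v_r(\rho;w_0)$ needs hypothesis~(ii); moreover a value shift $v \mapsto v - v(w_0)$ does not preserve $\cA_H$, since $\cB(v+c) \neq \cB v$).

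The genuine gap is the step you yourself flag as ``the main obstacle'': verifying \eqref{eq:cond_beta_incr} for the competitor $v$. The two workarounds you sketch (intrinsic Lipschitz approximation, or arguing via injectivity of $\pi^r$ directly) are not needed and are not what the paper does. The point is that hypothesis~(ii) already furnishes a \emph{classical, Euclidean} Lipschitz bound for $v$ exactly where it is needed. If $(y,t) \in D_{r,\rho}(w_0)$ with $y \ne y_0$, then since $D_{r,|y-y_0|/2}(w_0)$ has maximal half-width $|y-y_0|/2 < |y-y_0|$, the whole slice $\{y\}\times D_y$ is contained in $B_r^{\pm}(w_0)\setminus D_{r,|y-y_0|/2}(w_0)$, on which $v$ is Lipschitz with constant $\ell^v_r(|y-y_0|/2;w_0)$ by the definition \eqref{elle}. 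Hence $t \mapsto v(y,t)$ is (Euclidean) Lipschitz on $D_y$, so $\partial_t v$ exists a.e.\ there with $|\partial_t v| \le \ell^v_r(|y-y_0|/2;w_0)$, and
\[
\frac{d}{dt}\beta^v_y(t) = 1 + 4y\,\partial_t v(y,t) \;\ge\; 1 - 4|y-y_0|\,\ell^v_r\bigl(|y-y_0|/2;w_0\bigr) \;\ge\; \tfrac12
\]
for all $|y-y_0| < \rho$ once $\rho$ is small, precisely by \eqref{eq:cond_Lip_loc}. This makes $\beta^v_y$ strictly increasing on $D_y$, so Lemma~\ref{lem:minimization_intr_gr} applies and the two-sided comparison closes as you planned. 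Your approximation route is dispensable (and has its own difficulties regarding convergence of $\cA_H$), and the $\pi^r$-injectivity route is just a restatement of \eqref{eq:cond_beta_incr} rather than a proof of it; the direct derivative bound above is the step you need to add.
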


\begin{proof} 	We claim that  for every $w_0 \in \Omega$ there exists a neighborhood  $ U\subset \mathbb W$ of $w_0$ such that the  
	$S_v \cap U\cdot\mathbb V $
	is foliated by horizontal straight line segments. 
	
	Up to a translation, we can without loss of generality assume  that $w_0 = (0,r)$, where $r > 0$ is a radius for which \eqref{eq:cond_Lip_loc} holds.
	We estimate  the parameter $\zeta$ in \eqref{zeta}
	for the   domain $D_{r,\rho}(w_0) = D_{-\gamma_{r,\rho},\gamma_{r,\rho}}$ and boundary datum $\phi = v|_{\de D_{r,\rho}(w_0) }$:
	\begin{align*}
		\zeta & = 4 \big (\| \gamma_{r,\rho} \|_{\infty} + \Lip(\gamma_{r,\rho}) \big )\big  (\| \phi \|_{\infty} + \Lip(\phi\circ(s,\gamma_{r,\rho}(s))\big ) \\
		& \leq 4 \left( 1 + 2r \right) \Lip(\gamma_{r,\rho}) \big( \| v \|_{L^{\infty}(B_{r}(w_0))} + \ell^v_{r} (\rho;w_0) (1 + \Lip(\gamma_{r,\rho})) \big) . 
	\end{align*}
	We used \eqref{elle} to get the inequalities:
	\[
	\Lip(\phi\circ(s,\gamma_{r,\rho}(s))\leq 	\Lip(\phi) (1 + \Lip(\gamma_{r,\rho})) \leq \ell^v_{r} (\rho;w_0) (1 + \Lip(\gamma_{r,\rho})).
	\]
	Using now $ \Lip(\gamma_{r,\rho})= 2\rho/r$ we obtain the final estimate:
	\[
	\zeta\leq \dfrac{8}{r} \left( 1 + 2r \right) \left[ \rho  \| v \|_{L^{\infty}(B_{r}(w_0))} + \left( 1 + 2 \dfrac{\rho}{r} \right) \rho \, \ell^v_{r}(\rho;w_0) \right].
	\]
	
	By \eqref{eq:cond_Lip_loc},   for  $\rho$ sufficiently small, $\zeta$ satisfies  condition \eqref{eq:cond_invert_dx}.
	By Theorems \ref{lem:exlambda} and \ref{thm:sx_graph}, and by Remark \ref{rem:u_in_C11H}, the ruled surface $R_\phi$ is the left graph $S_u$  of a function $u\in \Lip (D_{r,\rho}(w_0)) \cap C^{1,1}_H(D_{r,\rho}(w_0))$.
	Moreover, by Theorem \ref{lem:dx_graph} $S_u$ is also a right graph and by the calibration argument of 
	Section \ref{CAL} it is a minimizer for the area with boundary datum $\phi$ in a suitable class of competitors.
	
	The function $v$ belongs to this class of competitors if the function $\beta^v_y$ satisfies condition \ref{eq:cond_beta_incr}. We check this condition. Consider $y \in Y_{D_{r,\rho}(w_0)}=(-\rho,\rho)$, $y\neq0$. Notice that if $(y,t)\in D_{r,\rho}(w_0)$ then 
	$(y,t) \in D_{r,\rho}(w_0) \setminus D_{r,|y|/2}(w_0)$.
	Thus, by \eqref{elle} we have:
	\begin{align*} 
		\dfrac{d}{dt} \beta^v_y(t) = 1 + 4 y  \de_t v (y,t) \geq 1 - 4 |y| \ell^v_{r} (|y|/2;w_0) .
	\end{align*}
	Because $|y| < \rho$,  selecting $\rho > 0$ small enough, by \eqref{eq:cond_Lip_loc}, we obtain
	\begin{equation*}
		\dfrac{d}{dt} \beta^v_y(t) \geq \dfrac{1}{2} > 0 ,
	\end{equation*}
	thus proving  the monotonicity of $ t \mapsto \beta^v_y(t)$, for every $y \in Y_D$.

	By Lemma \ref{lem:minimization_intr_gr}, the function $v$ is an admissible competitor for $u$ and therefore
	$$\cA_{D_{r,\rho}(w_0)}(u) \leq \cA_{D_{r,\rho}(w_0)}(v).$$ On the other hand, by assumption (i) we also have $\cA_{D_{r,\rho}(w_0)}(v) \leq \cA_{D_{r,\rho}(w_0)}(u)$, and thus $\cA_{D_{r,\rho}(w_0)}(v) = \cA_{D_{r,\rho}(w_0)}(u)$. By the equality characterization in
	Lemma \ref{lem:minimization_intr_gr}, we infer that $v=u$, and we conclude. %\textcolor{blue}{Finally, by Theorem \ref{thm:sx_graph} and Remark \ref{rem:u_in_C11H}, we know that $u$ is Lipschitz and $u \in C^{1,1}_H(D)$, respectively. This concludes the proof.}
\end{proof}

%\bibliographystyle{plain}
%\bibliography{Biblio_Plateau}{}

\end{document}